\documentclass[12pt,english,reqno]{amsart}
\usepackage[T1]{fontenc}
\usepackage{enumitem}

\usepackage{mathrsfs}
\usepackage{amstext}
\usepackage{amsthm}
\usepackage{amssymb}
\usepackage{hyperref}
\hypersetup{
     colorlinks   = true,
     citecolor    = blue,
     linkcolor    = blue
}
\usepackage{pdfsync}
\usepackage{verbatim}
\usepackage{xcolor}

\usepackage[left=1in, right=1in, top=1.1in,bottom=1.1in]{geometry}
\setlength{\parskip}{3.5pt}

\usepackage{stmaryrd}

\newtheorem{theorem}{Theorem}[section]
\newtheorem{cor}[theorem]{Corollary}
\newtheorem{lemma}[theorem]{Lemma}
\newtheorem{proposition}[theorem]{Proposition}
\newtheorem{definition}[theorem]{Definition}

\newtheorem{hyp}[theorem]{Hypothesis}
\newtheorem{notation}[theorem]{Notation}
\theoremstyle{remark}    
\newtheorem{remark}[theorem]{Remark}

\numberwithin{equation}{section}


\newcommand{\dom}{\mathrm{Dom}}

\newcommand{\va}{\mathrm{var}}

\def\RR{\mathbb{R}}

\def\rr{\rrbracket}
\def\ll{\llbracket}


\newcommand{\bp}{\mathbf{P}}

\newcommand{\bx}{{\bf x}}
\newcommand{\bxx}{{\bf X}}
\newcommand{\1}{{\bf 1}}


\newcommand{\R}{\mathbb R}


\newcommand{\cb}{\mathcal B}
\newcommand{\cac}{\mathcal C}

\newcommand{\ce}{\mathcal E}
\newcommand{\cf}{\mathcal F}
\newcommand{\ch}{\mathcal H}

\newcommand{\cj}{\mathcal J}
\newcommand{\cl}{\mathcal L}

\newcommand{\cp}{{\mathcal P}}

\newcommand{\cs}{\mathcal S}

\newcommand{\al}{\alpha}
\newcommand{\ep}{\varepsilon}

\newcommand{\ga}{\gamma}

\newcommand{\om}{\omega}
\newcommand{\oom}{\Omega}

\newcommand{\si}{\sigma}
\newcommand{\te}{\theta}

\newcommand{\lp}{\left(}
\newcommand{\rp}{\right)}
\newcommand{\lc}{\left[}
\newcommand{\rc}{\right]}
\newcommand{\lcl}{\left\{}
\newcommand{\rcl}{\right\}}
\newcommand{\lln}{\left|}
\newcommand{\rrn}{\right|}
\newcommand{\lla}{\left\langle}
\newcommand{\rra}{\right\rangle}

\date{}

\begin{document}
	\title[Trapezoid rules]{Convergence of trapezoid rule to rough integrals}
	\author
	[Y. Liu \and Z. Selk \and S. Tindel]
	{Yanghui Liu \and  Zachary Selk \and Samy Tindel}

\address{Yanghui Liu: Department of Mathematics, Baruch College, CUNY, One Bernard Baruch Way
(55 Lexington Ave. at 24th St), New York, NY 10010, United States}

\address{Zach Selk: Department of Mathematics, Purdue University, 150 N. University Street, West Lafayette, IN 47907, United States}

\address{Samy Tindel: Department of Mathematics, Purdue University, 150 N. University Street, West Lafayette, IN 47907, United States}

\thanks{\textit{2020 Mathematics Subject Classification}. 60G15, 60H07, 60L20.}

\thanks{S. Tindel is supported by the NSF grant  DMS-1952966.}

\keywords{Rough paths, Weighted random sums, Limit theorems, Malliavin calculus.}


	\begin{abstract}
	Rough paths techniques give the ability to define solutions of stochastic differential equations driven by signals $X$ which are not semimartingales and whose $p$-variation is finite only for large values of $p$. In this context, rough integrals are usually Riemann-Stieltjes integrals with correction terms that are sometimes seen as unnatural. As opposed to those somewhat artificial correction terms, our endeavor in this note is to produce a trapezoid rule for rough integrals driven by general $d$-dimensional Gaussian processes. Namely we shall approximate a generic rough integral $\int y \, dX$ by Riemann sums avoiding the usual higher order correction terms, making the expression easier to work with and more natural. Our approximations apply to all controlled processes $y$ and to a wide range of Gaussian processes $X$ including  fractional Brownian motion with a Hurst parameter $H>1/4$.	As a corollary of the trapezoid rule,   we   also consider the convergence of a midpoint rule for integrals of the form $\int f(X)  dX$. 
	\end{abstract}
	
		\maketitle

{
\hypersetup{linkcolor=black}
 \tableofcontents 
}

\section{Introduction}
Inspired by the seminal series of papers \cite{Ch, Young},
rough paths were first introduced in \cite{Lyons} in 1998 to study differential equations of the form:
\begin{equation}\label{CDE}
	y_t=y_0+\sum_{j=1}^d \int_0^t V_j(y_s)dX_s^j ,
\end{equation}
where $V_j$ are smooth bounded vector fields, $X:[0,T]\to\mathbb{R}^d$ is a given function with finite $p$-variation, usually a stochastic process, and $y:[0,T]\to\mathbb R^d$ is what is being solved for. Even though the final goal of the rough paths theory is to solve differential systems of the form \eqref{CDE} driven by arbitrary noisy inputs, the main step in the approach can be reduced to a proper definition of stochastic integrals like $\int_0^t V_j(y_s)dX_s^j$ above. In order to discuss this kind of integral, we   consider a generic partition $\cp=\{0=t_0<t_1<\cdots<t_{n+1}=t\}$ of $[0,t]$ and the following Riemann sum:
\begin{equation}\label{eq:riemann-sum}
	\mathcal{R}_0^t(V_j(y),X^j):=\sum_{k=0}^d V_j(y_{t_k})\delta X_{t_kt_{k+1}}^j,
\end{equation}
where we define $\delta X_{st}:=X_t-X_s$. In a classical setting, one obviously expects $\mathcal{R}_0^t(V_j(y),X^j)$ to converge to $\int_0^t V_j(y_s)dX_s^j$  as the mesh of $\cp$ goes to 0. Let us recall what this Riemann sum convergence becomes in a rougher and/or stochastic context.

\begin{enumerate}[wide, labelwidth=!, labelindent=0pt, label={(\roman*)}]
\setlength\itemsep{.1in}

\item
When $X$ is a semimartingale on a filtered probability space $(\Omega, \mathcal F, \mathcal{F}_t,\mu)$, we can use techniques from It\^o calculus to take limits in \eqref{eq:riemann-sum}. For example, assuming that $X$ is a $L^{2}$ continuous martingale, then we define the stochastic integral
\begin{equation}\label{eq:limit-for-semimartingales}
	\int_0^t V_j(y_s)dX_s^j\stackrel{L^2(\Omega)}{=}\lim_{|\cp|\to 0}\mathcal{R}_0^t(V_j(y),X^j),
\end{equation}
where the limit of the Riemann sums is understood in the $L^2(\Omega)$ sense. The stochastic integral \eqref{eq:limit-for-semimartingales} can be extended to all processes  which are adapted to the filtration $\mathcal{F}_t$ and square integrable with respect to the bracket of $X$.

\item
When $X$ is not a semimartingale, but is assumed to have finite $p$-variation for  $p<2$, one can rely on the classical Young-Stieltjes integration to take limits in \eqref{eq:riemann-sum}. 
Then one invokes a result in \cite{Young} which can be summarized as follows.
\begin{proposition}\label{prop:young-integration}
	Let $f\in C^{p\text{-var}}([0,T],\mathbb R^d)$ and $g\in C^{q\text{-var}}([0,T],\mathbb R^d)$ with $1/p+1/q>1$. Then the Riemann-Stieltjes integral exists:
	\begin{equation}
		\int_0^t f_s dg_s=\lim_{|\cp|\to 0}\mathcal{R}_0^t(f,g) . 
	\end{equation}
	
\end{proposition}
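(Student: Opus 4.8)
The plan is to run Young's classical ``drop a point'' scheme, extracting a quantitative Young--Lo\`eve estimate from which convergence of the Riemann sums follows. Fix a partition $\cp=\{0=t_0<t_1<\cdots<t_{n+1}=t\}$ and abbreviate $S_\cp:=\mathcal{R}_0^t(f,g)=\sum_k f_{t_k}\delta g_{t_k t_{k+1}}$. The starting observation is the exact effect of deleting one interior node $t_i$: splitting $\delta g_{t_{i-1}t_{i+1}}=\delta g_{t_{i-1}t_i}+\delta g_{t_i t_{i+1}}$ and simplifying gives
\begin{equation}
S_\cp-S_{\cp\setminus\{t_i\}}=\delta f_{t_{i-1}t_i}\,\delta g_{t_i t_{i+1}} .
\end{equation}
Hence the whole argument reduces to controlling such mixed increments as nodes are successively removed.

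First I would introduce the two super-additive control functions $\om_f(s,u):=\|f\|_{p\text{-var},[s,u]}^{\,p}$ and $\om_g(s,u):=\|g\|_{q\text{-var},[s,u]}^{\,q}$, and set $\om:=\om_f+\om_g$, which is again super-additive and, since $f$ and $g$ are continuous, vanishes uniformly on the diagonal as $u\downarrow s$. The combinatorial heart is the averaging lemma: any partition of a subinterval $[s,u]$ with $N$ elementary intervals has an interior node $t_i$ with $\om(t_{i-1},t_{i+1})\le \frac{2}{N-1}\om(s,u)$, obtained by splitting the overlapping intervals $[t_{i-1},t_{i+1}]$ into two disjoint families and invoking super-additivity. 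Writing $\te:=\frac1p+\frac1q$, the cost of removing this node is, by the identity above,
\begin{equation}
\big|\delta f_{t_{i-1}t_i}\,\delta g_{t_i t_{i+1}}\big|\le \om(t_{i-1},t_{i+1})^{1/p}\,\om(t_{i-1},t_{i+1})^{1/q}\le\Big(\tfrac{2}{N-1}\Big)^{\te}\om(s,u)^{\te}.
\end{equation}

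Deleting nodes one at a time and summing these costs then produces, precisely because $\te>1$ makes $\sum_{N\ge2}(N-1)^{-\te}$ converge, the Young--Lo\`eve bound
\begin{equation}
\big|S_\cp-f_s\,\delta g_{su}\big|\le C_\te\,\om(s,u)^{\te},\qquad C_\te:=2^{\te}\sum_{m\ge1}m^{-\te},
\end{equation}
valid on every subinterval and uniformly over partitions of it. This is the step where the hypothesis $1/p+1/q>1$ is indispensable, and where I expect the main difficulty to lie: one must verify that the averaging lemma can be reapplied at every stage and that the accumulated constant stays uniform, so that no control is lost through the repeated appeals to super-additivity.

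Finally I would upgrade the estimate to convergence. Given two partitions $\cp_1,\cp_2$, compare each to their common refinement $\cp_1\vee\cp_2$: applying the Young--Lo\`eve bound on each elementary interval $[r_j,r_{j+1}]$ of $\cp_1$ and summing yields
\begin{equation}
\big|S_{\cp_1}-S_{\cp_1\vee\cp_2}\big|\le C_\te\sum_j\om(r_j,r_{j+1})^{\te}\le C_\te\Big(\max_j\om(r_j,r_{j+1})\Big)^{\te-1}\om(0,t),
\end{equation}
where the last inequality uses $\te>1$ together with super-additivity $\sum_j\om(r_j,r_{j+1})\le\om(0,t)$. Since $\om$ vanishes on the diagonal, $\max_j\om(r_j,r_{j+1})\to0$ as $|\cp_1|\to0$, and the same holds with the roles of $\cp_1,\cp_2$ exchanged. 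Thus $(S_\cp)$ is a Cauchy net as $|\cp|\to0$; its limit exists, is independent of the approximating sequence, and defines $\int_0^t f_s\,dg_s$, completing the proof.
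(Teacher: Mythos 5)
Your argument is correct, but there is nothing in the paper to compare it against: Proposition \ref{prop:young-integration} is stated as a classical result and simply attributed to \cite{Young}, with no in-paper proof. What you have written is the standard Young--Lo\`eve point-removal argument, and the steps all check out: the one-node deletion identity, the pigeonhole step producing an interior node with $\om(t_{i-1},t_{i+1})\le \frac{2}{N-1}\om(s,u)$ (valid because the overlapping intervals split into two disjoint families, each controlled by super-additivity), the maximal inequality with constant $2^{\te}\sum_{m\ge 1}m^{-\te}$, which is finite precisely because $\te=1/p+1/q>1$, and the Cauchy-net comparison through the common refinement. Two remarks. First, the only assertion you leave unjustified is that $\max_j\om(r_j,r_{j+1})\to 0$ as the mesh tends to $0$; this requires the standard but nontrivial fact that for a \emph{continuous} path of finite $p$-variation the map $(s,u)\mapsto\|f\|_{p\text{-var};[s,u]}^{p}$ is a continuous control (this is where continuity of $f$ and $g$ actually enters, and it deserves a citation, e.g.\ to \cite{FV} or \cite{FrizBook}). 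Second, your maximal inequality is exactly the specialization of the paper's discrete sewing lemma, Lemma \ref{lem2.4}, to the choice $Q_{su}=\mathcal{R}_s^u(f,g)-f_s\,\delta g_{su}$: one checks $Q_{t_kt_{k+1}}=0$ and $\delta Q_{sut}=\delta f_{su}\,\delta g_{ut}$, so that \eqref{eqn.dQ} holds with $\mu=\te$ and control $\om$, and the constant $K_\mu=2^{\mu}\sum_l l^{-\mu}$ of that lemma coincides with your $C_\te$. In other words, you have re-derived by hand the abstract mechanism the paper reuses throughout Section \ref{sec.3}; invoking Lemma \ref{lem2.4} directly would shorten your middle step considerably.
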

\noindent
Proposition \ref{prop:young-integration} can be applied directly in order to analyze \eqref{eq:riemann-sum}. Specifically, if we assume that $y$ should have the same $p$-variation as $X$, then we can make sense of \eqref{CDE} in Young-Stieltjes sense whenever $X$ is a stochastic process with finite $p$-variation almost surely for $p<2$. In this case we define:
\begin{equation}
	\int_0^t V_j(y_s)dX_s^j\stackrel{\textrm{a.s.}}{=}\lim_{|\cp|\to 0}\mathcal{R}_0^t(V_j(y),X^{j}),
\end{equation} 
that is $\int_0^t V_j(y_s)dX_s^j$ is defined as an almost sure limit of Riemann sums.

\end{enumerate}

However, if we want to solve \eqref{CDE} beyond the semimartingale or the finite $p$-variation case with $p<2$, rough paths theory is the main available path-wise type method. We give a brief introduction to this method in Section \ref{sec:rough-path-above-X}, and refer to \cite{FrizBook,Lyons,HairerBook} for a more comprehensive guide. At this point we just mention that rough path theory lets us solve \eqref{CDE} for $X\in C^{p\text{-var}}$ and for arbitrary $p$, provided we can define the following stack of iterated integrals of order  $n=1,2,\dots, \lfloor p\rfloor$:
\begin{align}\label{eq:def-iterated-intg}
&X_{st}^{1,i}=\int_s^t dX_r^i, \quad
	X_{st}^{2,ij}=\int_s^t\int_s^r dX_u^i dX_r^j, \notag\\
	&X_{st}^{3,ijk}=\int_s^t\int_s^r\int_s^u dX_v^i dX_u^j dX_r^k, \quad
	\cdots
\end{align}
Notice that in this paper the analysis is restricted to the case $p<4$, which corresponds to needing to define the first three integrals in \eqref{eq:def-iterated-intg}. This is due to issues in defining these integrals for worse $p$-variation. As mentioned above, once the iterated integrals in~\eqref{eq:def-iterated-intg} are properly defined, we can solve~\eqref{CDE} thanks to the rough paths machinery whenever $X\in C^{p\text{-var}}$. In particular  if $y$ is the solution to~\eqref{CDE},
one can define the stochastic integral $\int y\, dX$ as the following limit of modified Riemann sums:
\begin{equation}\label{Rough Integral}
	\int_0^t y_{s} \, dX_s=\lim_{n\to\infty}\sum_{k=0}^{n} y_{t_{k}} \, X^{1}_{t_k,t_{k+1}}
	+V (y_{t_k}) \, X_{t_kt_{k+1}}^{2}+ V'V(y_{t_k}) \, X^{3}_{t_kt_{k+1}} ,
\end{equation}
where we have considered a 1-dimensional situation (namely $d=1$) in order to avoid cumbersome indices.
Standard and relevant applications of rough paths techniques are the ability to define and solve stochastic differential equations driven by fractional Brownian motion or other processes with low regularity that are not semimartingales. Even for equations driven by usual Brownian motions, the continuity results related to rough paths techniques bring simplifications in classical stochastic analysis results such as large deviations principles, see \cite{LD}. Other relevant applications include  data analysis, see \cite{ChineseChar} and filtering theory, see \cite{RF}.

Nevertheless,   in spite of the rough paths theory's numerous  achievements,   the definition~\eqref{Rough Integral} for an integral of $y$ with respect to $X$ is sometimes seen as somehow not natural due to the higher order ``correction'' terms. In addition, the presence of the high order iterated integrals in the right hand side of~\eqref{Rough Integral} makes the rough integral approximation difficult to implement numerically. Ideally, one would thus like to take limits on simple Riemann sums like \eqref{eq:riemann-sum}.

The natural endeavor of approximating rough (or other generalized stochastic) integrals by suitable Riemann sums has 
 been mostly carried out in case of a 1-dimensional fractional Brownian motion $B$ and for integrals of the form $\int f(B)dB$.  The   contributions in this direction includes \cite{BNN, GNRV, HN, HLN, NN, NR, NRS,  RV}. We should also mention that the approximation of rough integrals like \eqref{Rough Integral} by trapezoid rules, for a 1-d fractional Brownian motion and a general controlled process $y$, has been considered in \cite{OneD}. Namely, the following approximation of a rough integral $\int y dB$ is proposed in \cite{OneD}:
\begin{equation}\label{eq:def-trapezoid-1d}
	\text{tr-} \cj_0^t(y ,B):=\sum_{k=0}^n \frac{ y_{t_k} + y_{t_{k+1}} }{2}\delta B_{t_k t_{k+1}},
\end{equation} 
where $B$ is a one-dimensional fractional Brownian motion with Hurst parameter $H\ge1/6$ and where $y$ is a process whose increments are controlled by $B$ (see Definition~\ref{def:ctrld-process} below for the notion of controlled process). For $H>1/6$ it is proven that this trapezoid rule converges to the rough integral \eqref{Rough Integral}, while in the case $H=1/6$ an additional Brownian term pops out in the limit and the convergence holds   in the weak sense only. Notice that this phenomenon had already been observed in \cite{HN,NR, NRS} for integrands of the form $y_{t}=f(B_{t})$ for a sufficiently smooth function $f$.

\noindent With those preliminary remarks in mind, the main aim of the current contribution is to extend the scope of trapezoid type approximations to rough integrals. Our generalizations will go in two directions, that is \emph{(i)} we shall prove the convergence of trapezoid rules for $d$-dimensional Gaussian processes and \emph{(ii)} we handle the case of a general class of Gaussian processes beyond the fractional Brownian case. Our prototype of convergence theorem is stated below in an informal way. The reader is referred to Theorem \ref{theorem:Trapezoid-Rule} for a more precise statement.
\begin{theorem}\label{thm:cvgce-trapezoid-intro}
	Let $X$ be a centered Gaussian process on $[0,T]$   admitting a sufficiently regular covariance function $R$ in the 2-d $\rho$-variation sense. Denote the rough path lift of $X$ by $\textbf{X}=(X^1,X^2,X^3)$. Let $\mathbf{y}=(y,y^{1}, y^{2})$ be a process controlled by $\textbf{X}$ (examples of controlled processes include $y_{t}=f(X_{t})$ and solutions of SDEs driven by $\textbf{X}$). For a given partition of $[0,T]$, $\cp=\{0=t_0<t_1<\cdots<t_{n+1}<T\}$, we define the trapezoidal rule:
	\begin{equation}\label{eq:trap}
	\operatorname{tr-} \cj_0^T(y,X)= \sum_{k=0}^n\frac{y_{t_k}  +y_{t_{k+1}}  }{2}   \, X_{t_k t_{k+1}}^{1  } .
	\end{equation}
	Then as the mesh size $|\cp|\to 0$ we have 
	$$
	\operatorname{tr-} \cj_0^T(y,X)\xrightarrow[\text{a.s.}]{}\int_0^T  {y}_s  \, d\mathbf{X}_s ,
	$$ 
where the right hand side above designates the rough integral of $y$ against $X$.
\end{theorem}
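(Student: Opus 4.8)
The plan is to reduce the statement to a comparison between the trapezoid sum and the compensated (rough) Riemann sum appearing in \eqref{Rough Integral}. Since the rough integral $\int_0^T y_s\,d\mathbf X_s$ is \emph{defined} as the limit of the compensated sums
\[
S_{\cp}:=\sum_{k=0}^n\big(y_{t_k}X^1_{t_kt_{k+1}}+y^1_{t_k}X^2_{t_kt_{k+1}}+y^2_{t_k}X^3_{t_kt_{k+1}}\big)
\]
(this is \eqref{Rough Integral} with $y^1=V(y)$, $y^2=V'V(y)$ in the SDE case, and with the two Gubinelli derivatives $y^1,y^2$ of $\mathbf y$ in general), it suffices to prove that $\operatorname{tr-}\cj_0^T(y,X)-S_{\cp}\to0$ almost surely as $|\cp|\to0$.

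First I would expand the trapezoid sum. Writing $y_{t_{k+1}}=y_{t_k}+\delta y_{t_kt_{k+1}}$ gives $\operatorname{tr-}\cj_0^T(y,X)=\sum_k y_{t_k}X^1_{t_kt_{k+1}}+\tfrac12\sum_k\delta y_{t_kt_{k+1}}X^1_{t_kt_{k+1}}$. Since $\mathbf y=(y,y^1,y^2)$ is controlled by $\mathbf X$, one has the second order expansion $\delta y_{t_kt_{k+1}}=y^1_{t_k}X^1_{t_kt_{k+1}}+y^2_{t_k}X^2_{t_kt_{k+1}}+r_{t_kt_{k+1}}$, where $r$ is a third order remainder (finite $(p/3)$-variation). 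Plugging this in and using that the Gaussian lift $\mathbf X$ is a geometric rough path, so that the shuffle relations $\big(X^1_{st}\big)^2=2X^2_{st}$ and $X^1_{st}X^2_{st}=3X^3_{st}$ hold, I obtain after simplification
\[
\operatorname{tr-}\cj_0^T(y,X)-S_{\cp}=\tfrac12\sum_{k=0}^n y^2_{t_k}\,X^3_{t_kt_{k+1}}+\tfrac12\sum_{k=0}^n r_{t_kt_{k+1}}\,X^1_{t_kt_{k+1}}=:A_{\cp}+B_{\cp}.
\]
The whole problem thus reduces to showing $A_{\cp}\to0$ and $B_{\cp}\to0$.

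The term $B_{\cp}$ is handled by a purely pathwise rough-path estimate. Bounding $|r_{t_kt_{k+1}}|$ by the third power and $|X^1_{t_kt_{k+1}}|$ by the first power of a control $\omega$, each summand is of order $\omega(t_k,t_{k+1})^{4/p}$; since $4/p>1$ precisely when $p<4$, superadditivity of $\omega$ yields $|B_{\cp}|\le\big(\sup_k\omega(t_k,t_{k+1})\big)^{4/p-1}\omega(0,T)\to0$. The threshold $p<4$, i.e. $H>1/4$ for fractional Brownian motion, is exactly the regularity condition that makes this term negligible, and it is the source of the threshold in the theorem.

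The main obstacle is the term $A_{\cp}=\tfrac12\sum_k y^2_{t_k}X^3_{t_kt_{k+1}}$, a weighted random sum of the third level iterated integral against the controlled weight $y^2$. Here I would invoke the weighted-sum limit theorem that forms the probabilistic core of the paper: through the Wiener chaos decomposition of $X^3_{t_kt_{k+1}}$ and of $y^2$, one isolates a lower-chaos ``resonant'' contribution from a fluctuating higher-chaos part, and estimates each using the regularity of the covariance $R$ in the $2$-d $\rho$-variation sense. This should produce an $L^2$ bound $\E|A_{\cp}|^2\lesssim|\cp|^{\theta}$ for some $\theta>0$, which is then upgraded to almost sure convergence along a sequence of partitions by Borel--Cantelli, after a Garsia--Rodemich--Rumsey type control of the supremum over partitions. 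I expect this quantitative Malliavin-calculus estimate on $A_{\cp}$, valid for a general Gaussian process beyond the fractional Brownian case, to be the genuinely hard part, the pathwise term $B_{\cp}$ and the algebraic reduction via the shuffle relations being comparatively routine.
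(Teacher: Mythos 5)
Your reduction to $A_{\cp}+B_{\cp}$ is only valid in dimension one, and this hides the main difficulty of the theorem. The identity $X^1_{st}\otimes X^1_{st}=2X^2_{st}$ is false for a $d$-dimensional geometric rough path with $d\ge 2$: geometricity only gives $\mathrm{Sym}(X^2_{st})=\tfrac12 X^1_{st}\otimes X^1_{st}$. Consequently, after substituting the controlled expansion of $\delta y$, the difference $\operatorname{tr-}\cj_0^T(y,X)-S_{\cp}$ contains the extra term $\sum_k y^1_{t_k}\cdot\bigl(\tfrac12 X^1_{t_kt_{k+1}}\otimes X^1_{t_kt_{k+1}}-X^2_{t_kt_{k+1}}\bigr)=-\sum_k y^1_{t_k}\cdot\mathrm{Antisym}\bigl(X^2_{t_kt_{k+1}}\bigr)$, a weighted sum of L\'evy-area increments. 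This term does not vanish and is in fact the principal object of the paper: it is the term $I_2$ in the proof of Theorem \ref{theorem:Trapezoid-Rule}, controlled through the second-chaos estimates of Lemma \ref{lemma:F-bound}, the mixed third-chaos estimate of Lemma \ref{lemma.xf}, and the weighted-sum Lemma \ref{lem:weighted-F}. Your proposal drops it entirely, so the argument as written only addresses the scalar case. Similarly, the contraction $y^2_{t_k}X^2_{t_kt_{k+1}}\cdot X^1_{t_kt_{k+1}}$ does not collapse to $3\,y^2_{t_k}\cdot X^3_{t_kt_{k+1}}$ when $d\ge 2$; the shuffle relation only expresses it as a combination of permuted components of $X^3$, which is why the paper treats the two pieces of its term $I_3$ separately (Lemma \ref{lem:weighted-g} and Remark \ref{remark.b}).

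Two further points. First, for the surviving weighted sums (your $A_{\cp}$ and the missing L\'evy-area term) the mechanism is not a chaos decomposition of the whole weighted sum followed by Borel--Cantelli: the weights $y^1_{t_k}$, $y^2_{t_k}$ are anticipating, so the paper freezes the weight at the left endpoint of $[s,t]$, bounds the resulting unweighted chaos sums in $L^p$ (Lemmas \ref{lemma:F-bound}, \ref{lemma:g-bound} and \ref{lemma.xf}), and recombines the remainder via the discrete sewing lemma (Lemma \ref{lem2.4}) applied to an $L^1(\Omega)$-valued increment; you would need to supply an argument of this type rather than the sketch you give. Second, in the general $p$-variation setting the paper only obtains convergence in probability; almost sure convergence requires the H\"older hypothesis \eqref{eqn.Rholder} together with uniform partitions (item (ii) of Theorem \ref{theorem:Trapezoid-Rule}), a distinction your Borel--Cantelli remark does not capture. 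Your treatment of $B_{\cp}$ coincides with the paper's treatment of $I_4$ and is fine.
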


\noindent 
As mentioned above, our main Theorem \ref{thm:cvgce-trapezoid-intro} shows that one can approximate rough integrals by very natural Riemann sums, for a wide class of integrands $y$ and Gaussian driving noises~$X$.
As a corollary of our trapezoid rule, we will also     prove a midpoint rule for rough integrals of the form $\int f(X)  dX$; see Corollary \ref{cor.mid}.

 Let us   mention a few words about the techniques employed for our proofs. Indeed, most of the aforementioned 1-d contributions concerning convergences of trapezoid rules rely heavily on integration by parts techniques from Malliavin calculus, together with central limit theorems for random variables in a fixed chaos. The generalization described in Theorem~\ref{thm:cvgce-trapezoid-intro} requires a new set of methods. Specifically, we shall use a combination of rough paths techniques in discrete time in order to single out the main terms in~\eqref{eq:def-trapezoid-1d}. Then we can simplify the main part of the computations by performing our integration by parts on the building blocks of our rough path $\bxx$ only. Eventually we invoke some limit theorems for weighted sums in order to get our limit results.  

Here is a brief outline of our paper. In Section \ref{sec:preliminary-material} we set the ground for our computations by recalling some basic facts about rough paths analysis, Gaussian processes and Malliavin calculus.  Section~\ref{sec.3} is then devoted to the trapezoid rule. Namely  Section~\ref{sec:preliminary-results} gives some preliminary results about Young integrals and convergence of random sequences. Then some random sums in a finite chaos are analyzed in Section~\ref{subseq:lp-bounds}. The corresponding weighted sums are handled in Section~\ref{sec:bounds-weighted-sums}. With all those results in hand, our main theorem is proved in Section~\ref{sec:proof-main-thm}. Eventually we give a brief list of processes to which our general result applies in Section~\ref{sec:applications}.

 In this article, $C$ denotes a constant which may change from line to line. In the same way, $G$ will denote a generic integrable random variable whose value may change from line to line.

\section{Preliminary material}\label{sec:preliminary-material}

This section contains some basic tools from Malliavin calculus and rough paths theory, as well as some analytical results which are crucial for the definition and integration of controlled processes. 

\subsection{Elements of rough paths}\label{sec:rough-path-above-X}

In this section we shall recall the notion of a rough path above a continuous path $X$, and how this applies to Gaussian processes. The interested reader is referred to \cite{HairerBook,FrizBook} for further details.

\subsubsection{Basic rough paths notions}\label{subsubsection.notions}
\noindent For $s<t$ and $m\geq 1$, consider the simplex $\cs_{m}([s,t])=\{(u_{1},\ldots,u_{m})\in\lbrack s,t]^{m};\,u_{1}<\cdots<u_{m}\} $.  We start by introducing the notion of increments, which turns out to be handy in the definition of a rough path.

\begin{definition}\label{def:increments}
Let $k\ge 1$. Then the space of $(k-1)$-increments, denoted by $\cac_k([0,T],\R^d)$ or simply $\cac_k(\R^d)$, is defined as
$$
\cac_k(\R^d)\equiv \lcl g\in C(\cs_{k}([0,T]); \R^n) ;\, \lim_{t_{i} \to t_{i+1}} g_{t_1 \cdots t_{k}} = 0, 
\text{ for all } i\le k-1 \rcl.
$$
\end{definition}

\noindent
In the sequel we will also often resort to a finite difference operator called $\delta$, which acts on increments and is useful to split iterated integrals into simpler pieces.
\begin{definition}\label{def:delta-on-C1-C2}
Let $g\in\cac_1(\R^d)$, $h\in\cac_2(\R^d)$. Then for $(s,u,t)\in\cs_{3}([0,T])$,  we set
\begin{equation*}
  \delta g_{st} = g_t - g_s,
\quad\mbox{ and }\quad
\delta h_{sut} = h_{st}-h_{su}-h_{ut}.
\end{equation*}
\end{definition}

\noindent
In order to define rough integrals, some minimal regularity assumptions on increments will have to be made. In particular, it will be convenient to measure 
the regularity of increments in $\cac_{1}$ and $\cac_{2}$ in terms of $p$-variation.
\begin{definition}\label{def:var-norms-on-C2}
For $f \in \cac_2(\R^d)$, $p>1$ we set
$$
\|f\|_{p{\rm \text{-var}}}
=
\|f\|_{p{\rm \text{-var}}; [0,T]}
=
\sup_{\cp \subset [0,T]}\left(\sum_{i} |f_{t_{i}t_{i+1}}|^p\right)^{1/p},
$$
where the supremum is taken over all subdivisions $\cp$ of $[0,T]$. The set of increments in $\cac_{2}(\R^n)$ with finite $p$-variation is denoted by $\cac_{2}^{p\text{-var}}(\R^d)$.  For $f \in  \cac_{1}(\RR^{d})$, we denote $\|f\|_{p\text{-var}} = \| \delta f\|_{p\text{-var}} $. 
\end{definition}
We will also make an extensive use of H\"older norms, whose definition is recalled below:  
\begin{definition}\label{def.2.4}
We denote by $\cac_{2}^{\gamma}(\R^d)$ the space of $\gamma$-H\"older functions on $[0,T]$. That is,
	\begin{equation}
		\cac_{2}^{\gamma}(\R^d)=\left\{f\in \cac_{2} (\R^d) : \sup_{s,t\in [0,T]} \frac{|  f_{st}|}{|t-s|^{\gamma}}<\infty\right\}. 
	\end{equation}
 We define  $\cac_{1}^\gamma(\R^d)$ the space of functions $f$ such that $\delta f \in \cac_{2}^\gamma(\R^d)$.
\end{definition}

\noindent With these preliminary definitions in hand, we   now define the notion of a rough path above a  continuous $p$-variation  path $x$ with   $p>1$. 

\begin{definition}\label{def:RP}
Let $x$ be a    continuous  $\R^d$-valued  $p$-variation  path   for some $p>1$. We say that $x$  gives rise to a geometric $p$-rough path if there exists a continuous path $(x^{n }_{st}    , \,
(s,t)\in\cs_{2}([0,T]) )$ with values in $(\RR^{d})^{\otimes n}$ for each $ n\le \lfloor  p \rfloor $ such that $x^{1}_{st}=\delta x_{st}$,
  and a control function $\omega_{x}$ (in the sequel a control will always stand for a two variables function  on $\cs_{2} ([0,T]) $ which satisfies super-additivity conditions) such that

\noindent
\emph{(1) Regularity:}
For all $n\le \lfloor  p \rfloor$,   $x^n$ satisfies  $|x^{n}_{st}| \leq \omega_{x} (s,t)^{n/p}$.

\noindent
\emph{(2) Multiplicativity:}
With $\delta x^n$ as in Definition \ref{def:delta-on-C1-C2}, we have
\begin{equation}\label{eq:multiplicativity}
\delta x^{n }_{sut}=\sum_{n_1=1}^{n-1}
x^{n_{1} }_{su}  \otimes 
x^{n-n_{1} }_{ut} .
\end{equation}

\noindent
\emph{(3) Geometricity:}
Let  $x^{\ep}$ be a sequence of piecewise linear approximations of $x$. For any $n\le \lfloor p \rfloor$  we assume that $x^{\ep,n }$ converges in $\frac p n$-variation  norm to $x^{n }$, where $x^{\ep,n }_{st}$ is defined for $(s,t)\in\Delta_{2}$ by
\begin{equation}\label{eq:geometricity}
x^{\ep,n }_{st}
=\int_{(u_{1},\ldots,u_{n})\in\cs_{n}([s,t])} dx_{u_{1}}^{\ep } \otimes  \cdots\otimes dx_{u_{n}}^{\ep }.
\end{equation}

\noindent
In the sequel we will write $\bx$ for the rough path above $x$, that is
\begin{equation*} {\bf x}_{st}=
\lp
x^{1 }_{st} , \dots, x^{\lfloor p\rfloor}_{st}
\rp, \quad (s,t)\in\cs_{2}([0,T]) .
\end{equation*}
\end{definition}

\noindent 
One of the key success factors of the rough path theory is its ability to give a proper definition of stochastic calculus in very general contexts. Within this framework, the generic integrands in stochastic type integrals are so-called controlled paths, whose definition is recalled below. We first introduce some necessary notations about matrix products.

\begin{notation}\label{not:tensor-products}
In order to avoid lengthy indices in our formulae throughout the paper, we will adopt the following convention for matrix products: two generic elements  $v \in (\R^{d})^{\otimes k} $ and $u\in \cl( (\R^{d})^{\otimes k}, \R^{m} )$ will stand for families
\begin{eqnarray*}
v &=& 
\lcl v^{  j_{1}\cdots j_{k} }; \,\,    j_{1}, \dots, j_{k}  \in \{1,\dots, d\} \rcl \\
u &=& 
\lcl u^{i j_{1}\cdots j_{k} } ;\,\, i \in \{1,\dots, m\},\,\,  j_{1}, \dots, j_{k}  \in \{1,\dots, d\} \rcl , 
\end{eqnarray*}
where $ v^{  j_{1}\cdots j_{k} } $ and $ u^{i j_{1}\cdots j_{k} } $ are real numbers. 
In this context the product $uv$ is defined as an element of $\R^{m}$ such that for $1\le i\le m$ we have
 \begin{align*}
(uv)^{i}: = \sum_{j_{1}, \dots, j_{k}=1}^{d}  
u^{i j_{1}\cdots j_{k} }   \times
v^{  j_{1}\cdots j_{k} }. 
\end{align*}
Similarly for  $1\le k' \le k$ and 
$w \in (\R^{d})^{\otimes k'}$, we define   $uw$ as an element of $\cl ( (\R^{d})^{\otimes (k-k')}, \R^{m} )$ such that for $1\le i\le m$ and $1\le j_{k'+1},\ldots,j_{k}\le d$ we have
\begin{align*}
(uw)^{i j_{k'+1}\cdots j_{k}}: = \sum_{j_{1}, \dots, j_{k'}=1}^{d}  
u^{i j_{1}\cdots j_{k'}j_{k'+1}\cdots j_{k} }\times
v^{  j_{1}\cdots j_{k'} }. 
\end{align*}
\end{notation}

We can now    state  the definition of controlled process in  the $p$-variation framework.

\begin{definition}\label{def:ctrld-process}
	Let ${\bf x}= (x^{1}, \dots, x^{\lfloor p \rfloor})$ be a   $p$-variation rough path as introduced in Definition~\ref{def:RP}. Let $y^0,\ldots,y^{\ell-1}$ be continuous processes $y^k:[0,T]\to\cl ((\mathbb R^d)^{\otimes k},\mathbb R^m)$ and define the remainder terms:
	\begin{equation}\label{eq:remainder-k}
	r_{st}^{k }=\delta y_{st}^{k }
	-y_s^{k+1 }x_{st}^{1 }-\cdots -y_s^{\ell-1 }x_{st}^{\ell-k-1 }
	\end{equation}
	and $r_{st}^{\ell-1 }=\delta y_{st}^{\ell-1 }$, where we recall the notation $\delta$ given in Definition~\ref{def:delta-on-C1-C2} and our Notation~\ref{not:tensor-products} on matrix products. If there is a control function $\omega_{y}$ such that  
	\begin{align*}
|\delta y^{k}_{st}| \leq \omega_{y}(s,t)^{1/p}
\qquad \text{and} \qquad
|r_{st}^k|
	\leq \omega_{y}(s,t)^{(\ell-k)/p}
\end{align*}
 for  all  $k=0,1,\dots, \ell-1$, then we say that $ {\bf y}=(y^0,\ldots ,y^{\ell-1})$ is a  $\R^{m}$-valued path of order $\ell$ controlled by ${\bf x}$.

\end{definition}
\begin{remark}
	A controlled path has to be seen as a continuous path whose increments are ``dominated'' by the increments of $x$. Namely, $y^0$ is a continuous path taking values in $\mathbb{R}^m$ such that the increments $\delta y_{st}^0$ as given  in Definition~\ref{def:delta-on-C1-C2} are given by
	\begin{equation}\label{eq:def-controlled-process}
		\delta y_{st}^{0 }=y_s^{1 }x_{st}^{1 }+\cdots +y_s^{\ell-1 }x_{st}^{\ell-1 }+r_{st}^{0 }
	\end{equation}
	The other relations in Definition \ref{def:ctrld-process} are imposed for algebraic sake.
\end{remark}
\begin{remark}
In this paper, our main integration results will concern controlled paths. Hence it is worth recalling that this class of processes is rich enough. It includes for instance 
	solutions of differential equations driven by $x$ such as \eqref{CDE}, as well as continuous paths of the form $g(x)$ for a smooth enough function $g$. 
\end{remark}

\begin{remark}\label{remark.holder}
We will  also use $\gamma$-H\"older norms version of Definitions \ref{def:RP} and \ref{def:ctrld-process} in our discussion.  We will omit these definitions for sake of conciseness. As an example, let us just mention that in a $\ga$-H\"older version of \eqref{eq:remainder-k} we would assume $y^{k}\in\cac_{1}^{\ga}$ and $r^{k}\in\cac_{2}^{(\ell-k)\ga}$.
\end{remark}

The following proposition contains the classical result about integration of controlled processes with respect to a rough path, together with an approximation of the integral by enriched Riemann type sums.
\begin{proposition}\label{prop:intg-ctrld-process}
	Let ${\bf x}$ be a 
continuous $p$-variation    
rough path   on $[0,T]$ and let $ {\bf y}$ be a $\R^{d}$-valued   path of order $\ell=\lfloor p \rfloor$ controlled by $ {\bf x}$ as introduced in Definition \ref{def:ctrld-process}. Consider a sequence of partitions of $[0,T]$ with mesh size $|\cp|\to 0$. Then the following limit:
\begin{equation}
\label{eqn.ri}
\lim_{|\cp|\to 0}  \sum_{k=0}^n y_{t_k}^{0 }x_{t_k t_{k+1}}^{1 }+y_{t_k}^{1 } x_{t_k t_{k+1}}^{2 }+\cdots+y_{t_k}^{\ell-1 }x_{t_k t_{k+1}}^{\ell }
\end{equation}
  exists almost surely. It is called   rough integral of $y$ with respect to ${\bf x}$ and is denoted by $\int_0^T  {y}_s   d {\bf x}_s$.\end{proposition}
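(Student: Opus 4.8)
The plan is to recognize the sum inside the limit \eqref{eqn.ri} as the Riemann sums attached to a two-parameter germ, and to run the sewing (Young--Towghi/Gubinelli) lemma on that germ. Concretely, for $(s,t)\in\cs_{2}([0,T])$ I set
\[
A_{st}=\sum_{n=0}^{\ell-1} y_s^{n}\, x_{st}^{n+1},
\]
so that the quantity inside the limit is exactly $\sum_{k} A_{t_k t_{k+1}}$, a sum of increments of $A$ along the partition $\cp$. Since a geometric $p$-rough path in the sense of Definition~\ref{def:RP} is a deterministic object, the whole argument is pathwise: once a realization of $\mathbf{x}$ and of the controlled path $\mathbf{y}$ is fixed, everything below is deterministic, and the ``almost sure'' convergence in the statement is then inherited from the almost sure existence of the rough path lift of the underlying (Gaussian) process.

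The heart of the matter is to compute $\delta A_{sut}$ and to exhibit a cancellation. Expanding $A_{st}-A_{su}$ by means of the multiplicativity relation \eqref{eq:multiplicativity}, which reads $x_{st}^{n+1}=x_{su}^{n+1}+x_{ut}^{n+1}+\sum_{i=1}^{n} x_{su}^{i}\otimes x_{ut}^{n+1-i}$, and then rewriting the increments $\delta y_{su}^{n}$ through the controlled-path relation \eqref{eq:remainder-k}, namely $\delta y_{su}^{n}=r_{su}^{n}+\sum_{j=1}^{\ell-1-n} y_s^{n+j} x_{su}^{j}$, produces two double sums: one coming from the multiplicativity terms and one coming from the $y^{n+j}x^{j}$ terms. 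The key algebraic observation is that, after reindexing by the total level $N=n+j$, these two double sums coincide and cancel, leaving the clean identity
\[
\delta A_{sut}=-\sum_{n=0}^{\ell-1} r_{su}^{n}\, x_{ut}^{n+1}.
\]
This telescoping of the geometric terms against the controlled-structure terms is the one step I expect to be delicate to write out correctly; for the range $p<4$ treated here, so that $\ell\le 3$, it may alternatively be checked level by level.

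Granting this identity, the regularity estimate is immediate. Combining the controlled bound $|r_{su}^{n}|\le \omega_{y}(s,u)^{(\ell-n)/p}$ from Definition~\ref{def:ctrld-process} with the rough-path bound $|x_{ut}^{n+1}|\le \omega_{x}(u,t)^{(n+1)/p}$ from Definition~\ref{def:RP}, and using the superadditivity of the combined control $\omega:=\omega_{x}+\omega_{y}$, each summand obeys
\[
\big|r_{su}^{n}\, x_{ut}^{n+1}\big|\le \omega(s,t)^{(\ell-n)/p+(n+1)/p}=\omega(s,t)^{(\ell+1)/p},
\]
an exponent that is independent of $n$ and, crucially, satisfies $(\ell+1)/p=(\lfloor p\rfloor+1)/p>1$. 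Hence $\delta A\in\cac_{3}$ with $|\delta A_{sut}|\le C\,\omega(s,t)^{\theta}$ for some $\theta>1$.

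It then remains to invoke the sewing lemma (see \cite{FrizBook,Lyons}): a germ $A\in\cac_{2}$ whose coboundary $\delta A$ is dominated by a superadditive control raised to a power $\theta>1$ admits a unique sewing $\ci\in\cac_{1}$ with $|\delta\ci_{st}-A_{st}|\le C\,\omega(s,t)^{\theta}$, and the associated Riemann sums $\sum_{k} A_{t_k t_{k+1}}$ converge to $\delta\ci_{0T}$ as $|\cp|\to 0$, the limit being independent of the chosen sequence of partitions. Applying this to our $A$ yields the existence of the limit in \eqref{eqn.ri} and identifies it with $\int_0^T y_s\, d\mathbf{x}_s$. In summary, the only genuine obstacle is the bookkeeping behind the cancellation giving $\delta A_{sut}=-\sum_{n} r_{su}^{n} x_{ut}^{n+1}$; the estimate and the sewing step are then routine.
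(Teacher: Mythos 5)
Your proposal is correct and is precisely the classical sewing-lemma argument that the paper relies on: Proposition \ref{prop:intg-ctrld-process} is stated there without proof as the standard integration result for controlled paths (cf.\ \cite{FrizBook,HairerBook}), and your derivation of the identity $\delta A_{sut}=-\sum_{n=0}^{\ell-1} r_{su}^{n}\,x_{ut}^{n+1}$ via multiplicativity \eqref{eq:multiplicativity} together with the controlled-path relations \eqref{eq:remainder-k}, followed by the exponent count $(\lfloor p\rfloor+1)/p>1$ and an application of the sewing lemma, is exactly that proof. The reindexing cancellation you flag as delicate does go through as you describe (the two double sums coincide after setting $N=n+j$, using the associativity of the contractions in Notation \ref{not:tensor-products}), so there is no gap.
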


\noindent 
One of the crucial ingredients in rough paths theory is the sewing lemma for integration. We label two discrete versions of this lemma, taken from \cite{Euler}, for further use. 
 In the following we denote $\cs_{m} (\llbracket s,t\rrbracket) = \{(u_{1},\ldots,u_{m})\in\llbracket s,t\rrbracket^{m};\,u_{1}<\cdots<u_{m}\} $, where $\llbracket s,t\rrbracket$ denotes the discrete interval related to a given partition of $[s,t]$ (see our forthcoming Notation~\ref{not:discrete-interval}).  For notational sake, we just write $\cs_{m}$ for $\cs_{m}(\ll 0, T\rr)$.

 \begin{lemma}\label{lem2.4}
 Consider  a Banach space $(\cb,\|\cdot\|)$ and $Q : \cs_{2}    \to \cb $. Recall that we set $\delta Q_{sut} = Q_{st}-Q_{su}-Q_{ut}$. 
Suppose that $\omega $ is a control on $ \ll 0, T\rr$.    
Moreover, assume that $Q_{t_{k}t_{k+1}}=0$ for all $t_{k}\in \ll0,T\rr$ and  that  
  \begin{align}\label{eqn.dQ}
\|\delta Q_{sut}\|\leq \omega (s, t)^{\mu}
\end{align}
   for all $(s, u, t)\in \cs_{3} $. 
     Then the following relation holds for all $(s,t)\in \cs_{2}$:
 \begin{eqnarray*}
 \|Q_{st} \|  \leq K_{\mu} \, \omega(s, t)^{\mu} \,,
\quad\text{where}\quad
K_{\mu} = 2^{\mu} \, \sum_{l=1}^{\infty} l^{-\mu}.
\end{eqnarray*}
\end{lemma}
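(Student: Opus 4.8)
The plan is to prove this discrete sewing lemma by a dyadic-telescoping argument, which is the standard strategy for sewing-type results adapted here to the discrete interval $\ll s,t\rr$. The key observation is that the hypothesis $Q_{t_k t_{k+1}}=0$ means $Q$ vanishes on adjacent partition points, so the only way to build up $Q_{st}$ from these zero values is to repeatedly split the interval and control the errors via $\delta Q$. First I would fix $(s,t)\in\cs_2$ and consider the partition points of $\ll s,t\rr$, say $s=u_0<u_1<\cdots<u_N=t$. I would then run an iterative coarsening procedure: starting from the finest level where $Q_{u_i u_{i+1}}=0$, I successively remove points to pass from a partition of size roughly $N$ down to the trivial partition $\{s,t\}$, and at each removal of an interior point $u_j$ I pay the price $\|\delta Q_{u_{j-1}u_j u_{j+1}}\|\le \omega(u_{j-1},u_{j+1})^\mu$.

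The central mechanism is a telescoping identity. Writing $Q_{st}=\sum(\text{consecutive }Q\text{ increments on the current partition})+\sum(\text{correction }\delta Q\text{ terms})$, I would exploit the algebraic relation $Q_{su_j}+Q_{u_j t}=Q_{st}-\delta Q_{su_j t}$ to remove one point at a time. To get the clean constant $K_\mu=2^\mu\sum_{l\ge1}l^{-\mu}$, the sharp way is to remove points greedily so as to minimize the weight incurred at each step: at each stage, among all interior points of the current partition one selects the point $u_j$ whose removal costs the least, and a counting argument shows that the super-additivity of $\omega$ guarantees that removing the point minimizing $\omega(u_{j-1},u_{j+1})$ yields a bound of the form $\omega(u_{j-1},u_{j+1})\le \tfrac{2}{l}\,\omega(s,t)$ when $l$ interior points remain. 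Summing these contributions across $l=N-1,N-2,\dots,1$ gives
\[
\|Q_{st}\|\le \sum_{l=1}^{N-1}\left(\frac{2}{l}\right)^{\mu}\omega(s,t)^{\mu}\le 2^{\mu}\left(\sum_{l=1}^\infty l^{-\mu}\right)\omega(s,t)^{\mu},
\]
which is exactly the claimed estimate (note $\mu>1$ is needed implicitly for convergence of the series). The vanishing of $Q$ on adjacent points is what makes the base case of the telescoping trivial, so no boundary term survives.

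I would expect the main obstacle to be the combinatorial heart of the greedy coarsening, namely justifying rigorously that at the stage with $l$ interior points one can always find a point whose removal costs at most $(2/l)^\mu\,\omega(s,t)^\mu$. This rests on super-additivity of the control: if $l$ interior points $u_1,\dots,u_l$ remain between $s$ and $t$, then the overlapping pairs $\{\omega(u_{j-1},u_{j+1})\}$ cannot all be large, since super-additivity forces $\sum_j \omega(u_{j-1},u_{j+1})$ (counting each short interval at most twice) to be bounded by $2\,\omega(s,t)$, and hence the minimum over $j$ is at most $\tfrac{2}{l}\omega(s,t)$. Making this pigeonhole step precise, and tracking that the discrete nature of $\ll s,t\rr$ does not introduce any pathology (it only terminates the recursion after finitely many steps), is the delicate part; everything else is the routine telescoping bookkeeping. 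Once the per-step bound is secured, summing the geometric-type series to produce $K_\mu$ is immediate.
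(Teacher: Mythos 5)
Your argument is correct and is exactly the classical Young--Lyons point-removal proof of the discrete sewing lemma: the paper itself states Lemma \ref{lem2.4} without proof, citing \cite{Euler}, and the proof there is precisely your greedy coarsening, with the pigeonhole step $\min_j \omega(v_{j-1},v_{j+1})\le \tfrac{2}{l}\,\omega(s,t)$ justified by applying super-additivity separately to the odd-indexed and even-indexed (hence non-overlapping) windows. The bookkeeping you describe, including the implicit requirement $\mu>1$ for $K_\mu<\infty$, matches the intended argument.
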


The following lemma is a particular case of  Lemma \ref{lem2.4} with $\omega(s,t)=|t-s|$, which can be seen as the sewing lemma in H\"older norm. 
\begin{lemma}\label{prop:sewing}   Fix a constant $\mu>1$. 
Let $Q$ be   as in Lemma \ref{lem2.4}, and we further assume that   there exists a constant $C>0$ such that 
  \begin{align*}
|\delta Q_{sut}|\leq C \cdot |t-s|^{\mu} ,
\qquad\text{for all } (s,u,t)\in \cs_{3}.
\end{align*}
Then   for all $(s,t)\in \cs_{2}$ we have
\begin{equation}
	|Q_{st}|  \leq CK_\mu |t-s|^{\mu} .
\end{equation}
\end{lemma}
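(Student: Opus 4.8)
The plan is to obtain Lemma \ref{prop:sewing} as an immediate specialization of the discrete sewing bound in Lemma \ref{lem2.4}, the only point requiring attention being the choice of control function absorbing the constant $C$. Since $Q$ is taken to be as in Lemma \ref{lem2.4}, the structural assumption $Q_{t_k t_{k+1}}=0$ for all $t_k\in\ll 0,T\rr$ is already in force, so it suffices to recast the H\"older increment bound $|\delta Q_{sut}|\le C|t-s|^{\mu}$ into the form required by that lemma.

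First I would set $\omega(s,t):=C^{1/\mu}\,|t-s|$ and check that this is a control on $\ll 0,T\rr$. The super-additivity requirement reduces here to the inequality $\omega(s,u)+\omega(u,t)\le\omega(s,t)$ for $(s,u,t)\in\cs_{3}$, which holds with equality since $C^{1/\mu}(u-s)+C^{1/\mu}(t-u)=C^{1/\mu}(t-s)$; continuity and vanishing on the diagonal are clear. With this weight the hypothesis \eqref{eqn.dQ} of Lemma \ref{lem2.4} is satisfied, because $\|\delta Q_{sut}\|\le C|t-s|^{\mu}=\big(C^{1/\mu}|t-s|\big)^{\mu}=\omega(s,t)^{\mu}$ for all $(s,u,t)\in\cs_{3}$.

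Invoking Lemma \ref{lem2.4} then yields $\|Q_{st}\|\le K_{\mu}\,\omega(s,t)^{\mu}=K_{\mu}\,C\,|t-s|^{\mu}$ for every $(s,t)\in\cs_{2}$, which is exactly the claimed estimate. Equivalently, one may apply Lemma \ref{lem2.4} literally with $\omega(s,t)=|t-s|$ to the rescaled increment $Q/C$ and multiply the resulting bound by $C$. The constraint $\mu>1$ guarantees that the constant $K_{\mu}=2^{\mu}\sum_{l\ge1}l^{-\mu}$ inherited from Lemma \ref{lem2.4} is finite. There is no genuine obstacle in this argument: the entire analytic content sits in Lemma \ref{lem2.4}, and the present statement is merely its reformulation in the linear H\"older scale, the only verification being the (trivial) super-additivity of the linear weight.
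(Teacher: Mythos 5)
Your argument is correct and coincides with the paper's own treatment: the authors present Lemma \ref{prop:sewing} precisely as the specialization of Lemma \ref{lem2.4} to the control $\omega(s,t)=|t-s|$, with the constant $C$ absorbed exactly as you do. Nothing further is needed.
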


\subsubsection{Gaussian processes as rough paths}

\noindent Let us now turn to a more probabilistic setting for our computations. Namely  we assume that $X_t=(X_t^1,\ldots,X_t^d)$ is a continuous, centered Gaussian process with i.i.d. components, defined on a complete probability space $(\Omega, \cf, \bp)$. The covariance function of $X$ is defined as follows
\begin{equation}\label{eq:def-covariance-X}
R(s,t):=E\lc X_{s}^{j} X_{t}^{j}\rc, 
\end{equation}
where $X^{j}$ is any of the components of $X$. We shall also resort to  the following notation in the sequel
\begin{equation}\label{eq:def-variance-Xt}
\si_{t}^{2} := E\lc  \lp  X_{t}^{j} \rp^{2} \rc,
\quad\text{and}\quad
\si_{st}^{2} := E\lc  \lp  \delta X_{st}^{j} \rp^{2} \rc .
\end{equation}
The information concerning $X$ used below is mostly encoded in the rectangular increments of the covariance function $R$, which are given for $s,t,u,v\in [0,T]$ by
\begin{align}\label{eq:rect-increment-cov-fct}
R^{st}_{uv} := R(t,v)-R(t,u)-R(s, v)+R(s,u).
\end{align}
Notice that whenever the function $R$  is given as a covariance function like in \eqref{eq:def-covariance-X},   the rectangular increments of $R$ can also be written as
\begin{equation}\label{eqn.rcov}
R^{st}_{uv}
=
E\lc (X_t^j-X_s^j) \, (X_v^j-X_u^j) \rc.
\end{equation}
Related to rectangular 2-d increments, the notion of 2-dimensional $\rho$-variation leads to an efficient way of constructing rough paths above a Gaussian process $X$. It will also feature prominently in our considerations, and thus we label its definition for further use.
\begin{definition}\label{def:mixed-variation}
For a general continuous function $R:[0,T]^{2}\to\R$ and a parameter $\rho\geq1$, we set 
\begin{align} 
\|R\|_{\rho-\va;[s,t]\times[u,v]}
:=\sup_{\substack{(t_{i})\in\mathcal{P}([s,t])\\
(t_{j}^{\prime})\in\mathcal{P}\left(\left[u,v\right]\right)
}
}
\left(\sum_{i,j }
\left|R_{t_{i}t_{i+1}}^{t_{j}^{\prime}t_{j+1}^{\prime}}\right|^{\rho}\right)^{\frac{1}{\rho}},\label{eq:mixed_var}
\end{align}
where $\mathcal{P}([s,t])$ denotes the set of all partitions of $[s,t]$ and where $R^{t_{j'}t_{j'+1}}_{t_i t_{i+1}}$ is defined in \eqref{eq:rect-increment-cov-fct}. 
\end{definition}

\noindent
We also define the space of functions in the plane with finite 2-d $\rho$-variation:
\begin{definition}\label{2d-rho-var-space}
	Given a finite dimensional vector space $E$, we define $C^{\rho\text{-var}}([0,T]^2,E)$ to be the space of all functions $f:[0,T]^2\to E$ such that $\|f\|_{\rho\text{-var}}<\infty$. 
\end{definition}

The standard assumption allowing to build a rough path above a generic Gaussian process concerns the $\rho$-variation of its covariance function. This is why we   assume that the following hypothesis holds throughout the paper.
     
\begin{hyp}\label{hyp.x}
 We assume that $X$ is a centered continuous Gaussian process with covariance function $R$ such that $\|R\|_{\rho\text{-var}}<\infty$ for $\rho\in [1,2)$. 
\end{hyp}

\begin{remark}\label{remark.rho}
Note that  the $\rho$-variation norm $\|\cdot\|_{\rho\text{-var}}$ introduced in Definition \ref{def:mixed-variation} uses grid-like partitions. 
As pointed out in  \cite{FV}, those $\rho$-variations do not enjoy super-additivity properties.   A standard way to circumvent  this problem is to replace the $\rho$-variation of Definition \ref{def:mixed-variation} by the so-called controlled 2-d $\rho$-variation norm (see \cite[Definition 1]{FV}), which we denote by~$|\cdot|_{\rho\text{-var}}$. The norm $|\cdot|_{\rho\text{-var}}$ does satisfy sup-additivity properties (cf \cite[Theorem 1 (iii)]{FV}). 
Therefore, although we have not assumed $ |R|_{\rho\text{-var}}<\infty$ in Hypothesis \ref{hyp.x}, we can consider a $\rho' = (\rho+\ep)>\rho$ for $\ep$ arbitrarily small  such that   $|R|_{\rho'\text{-var}}<\infty$ (this is ensured by \cite[relation (1.2)]{FV}). 
 Then one is allowed to pick the control $\om_{R}(D)=|R|_{\rho'\text{-var}}^{\rho'}(D)$. This kind of manipulation is routinely performed in  e.g \cite{JM,GOT}.
\end{remark}
With Remark \ref{remark.rho} in mind and  for notational sake, throughout the section we will skip the replacement of $\rho$ by $\rho'$ and pretend that the following is a consequence of    Hypothesis~\ref{hyp.x}. 


  \begin{hyp}\label{hyp.w}
  We assume that $X$ is a centered continuous Gaussian process with covariance function $R$, $\rho$ is a parameter lying in  $[1,2)$ and $\omega_{R}$ is a 2-d  control  such that for any rectangle $D $ we have
   \begin{align*}
\|R\|_{\rho\text{-var}, D}\leq \omega_{R}(D)^{1/\rho}.
\end{align*}
Recall that   we say that  $\omega_{R}$ is a 2-d control if it is  continuous, zero on degenerate rectangles and satisfies  
$ \omega_{R} (D) \geq \sum_{i=1}^n \omega_{R} (D_i)$ 
if $ D_i:0\le i\le n $ are disjoint  rectangles such that  $D=\cup_{i} D_{i}$.
  \end{hyp}
 
 The following result (stated e.g. in \cite[Theorem 15.33]{FV}) relates the 2-d $\rho$-variation of $R$ with the pathwise assumptions allowing to apply the abstract rough paths theory.

\begin{proposition}\label{prop:Gaussian-rough-path}
Let $X=(X^1,\ldots,X^d)$ be a continuous centered Gaussian process with  i.i.d. components and covariance function $R$ defined by \eqref{eq:def-covariance-X}. If $R$ satisfies Hypothesis \ref{hyp.x} then $X$ gives raise to a geometric $p$-rough path according to Definition~\ref{def:RP},  provided $p>2\rho$. 
\end{proposition}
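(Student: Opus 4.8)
The plan is to build the rough path lift $\mathbf X=(X^1,\dots,X^{\lfloor p\rfloor})$ as an $L^2$ limit of the iterated integrals $X^{\ep,n}$ of the piecewise linear approximations $X^\ep$ appearing in \eqref{eq:geometricity}, and to read off the three requirements of Definition \ref{def:RP} by passing to that limit. The structural fact I would lean on throughout is that each $X^{\ep,n}_{st}$ sits in the Wiener chaos of order $n$ generated by $X$, so that all of its $L^q$ norms are comparable by Nelson's hypercontractivity. This reduces the whole problem to $L^2$ estimates on the increments of the iterated integrals, and the 2-d $\rho$-variation of $R$ is exactly the object that controls these $L^2$ quantities.

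First I would treat the base level $n=1$, which is immediate: $X^{\ep,1}_{st}=\delta X^\ep_{st}\to\delta X_{st}$, and by Hypothesis \ref{hyp.w},
$$
E\lc|\delta X_{st}|^2\rc=R^{st}_{st}\leq \omega_R([s,t]^2)^{1/\rho},
$$
so the first level already scales correctly. For $n=2,3$ (recall that $p<4$, so $\lfloor p\rfloor\le 3$) I would express the covariance $E[X^{\ep,n}_{st}\,X^{\ep',n}_{s't'}]$ of two iterated integrals as an iterated two-dimensional Riemann--Stieltjes integral against the mixed differential $dR$, using the Gaussian isometry and Wick formulas, and then bound it by 2-d Young integration. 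The target estimate, uniform in $\ep$, is
$$
E\lc|X^{\ep,n}_{st}|^2\rc\leq C\,\omega_R([s,t]^2)^{n/\rho}.
$$
This is the step where the standing assumption $\rho<2$ enters, since it is precisely the condition $\tfrac1\rho+\tfrac1\rho>1$ that makes the planar Young integrals converge and yields the control above.

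With these $L^2$ bounds in hand, hypercontractivity would give $(E|X^{\ep,n}_{st}|^q)^{1/q}\leq C_q\,\omega_R([s,t]^2)^{n/(2\rho)}$ for every $q\ge 1$. Because $p>2\rho$ forces $n/(2\rho)>n/p$, a Kolmogorov-type continuity criterion for rough paths then delivers at once: the $L^2$ convergence $X^{\ep,n}\to X^n$; the almost sure regularity bound $|X^n_{st}|\leq\omega_x(s,t)^{n/p}$ with a control $\omega_x\sim\omega_R^{p/(2\rho)}$ manufactured from $\omega_R$ (whose exponent exceeds $1$, so that it remains a control), which is item (1) of Definition \ref{def:RP}; and the $\tfrac pn$-variation convergence required by item (3). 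The multiplicativity \eqref{eq:multiplicativity} and the geometricity \eqref{eq:geometricity} hold identically for the smooth paths $X^\ep$ as the classical Chen relations for genuine iterated integrals, and both are stable under the limit, giving items (2) and (3).

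The hard part will be the central $L^2$ estimate $E|X^{\ep,n}_{st}|^2\lesssim\omega_R([s,t]^2)^{n/\rho}$, together with the companion Cauchy estimate $E|X^{\ep,n}_{st}-X^{\ep',n}_{st}|^2\to0$ carrying the same scaling, since these must hold with a \emph{super-additive} control if the sewing lemmas are to apply. Carrying this out cleanly requires unfolding each iterated integral into a planar integral against $dR$, dominating it by the controlled 2-d $\rho$-variation norm of Remark \ref{remark.rho}, and verifying the super-additivity of the resulting control. This is the technical core of the Gaussian rough path construction; for the complete argument I would follow \cite[Chapter 15]{FV}.
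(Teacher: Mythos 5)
The paper offers no proof of this proposition: it is quoted directly from \cite[Theorem 15.33]{FV}, which is exactly the reference your sketch ultimately defers to for the central $L^2$ estimate. Your outline (chaos/hypercontractivity reduction to $L^2$, planar Young integration against $dR$ under $\rho<2$, Kolmogorov--Besov upgrade to a control using $p>2\rho$, and stability of Chen's relation and geometricity under the limit) is a faithful summary of that standard construction, so it is essentially the same approach as the paper's.
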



\subsection{Wiener spaces associated to general Gaussian processes}\label{sec:wiener-space-general}

In this section we consider again the continuous, centered Gaussian process $X$ of Section~\ref{sec:rough-path-above-X}. Recall that its covariance function $R$ is defined by \eqref{eq:def-covariance-X}. We will describe the Cameron-Martin space assuming that we are in a real valued situation, the generalization to a $\R^{d}$-valued process being left to the patient reader.

The analysis of iterated integrals performed in Section 
\ref{sec.3}
 will be based on a Hilbert space $\ch$ allowing a proper definition of Wiener integrals as defined e.g in~\cite{NuaBook}. Namely   ${\mathcal{H}}$ is defined to be the
completion of the linear space of functions of the form
\[
\mathcal{E}
=
\left\{  \sum_{i=1}^{n}a_{i} \1_{\left[  0,t_{i}\right]  }:a_{i}\in%
\mathbb{R}
\text{, }t_{i}\in\left[  0,T\right]  \right\}  ,
\]
with respect to  the inner product%
\begin{equation}\label{eq:def-inner-pdt-H}
\left\langle \sum_{i=1}^{n} a_{i} \1_{[0,t_{i}]}  ,
\sum_{j=1}^{m}b_{j} \1_{[0,s_{j}]}  \right\rangle _{\mathcal{H}}
=
\sum_{i=1}^{n}\sum_{j=1}^{m}a_{i}b_{j}R\left(  t_{i},s_{j}\right) .
\end{equation}

\begin{remark}\label{representation H norm}
Consider the special case $X_0=0$, which means in particular that $R(0,0)=0$. Then, as suggested by \eqref{eq:def-inner-pdt-H}, for any $h_1, h_2\in\mathcal{H}$, we can infer that
\begin{align}\label{rep H norm}
\langle h_1, h_2\rangle_\mathcal{H}=\int_0^T\int_0^Th_1(s)h_2(t)dR(s,t),\end{align}
whenever the 2-d Young's integral on the right-hand side is well-defined (one can refer e.g to~\cite{FrizBook} for more details).
\end{remark}

Since $\mathcal{H}$ is the completion of $\ce$ with respect  to the inner product defined by~\eqref{eq:def-inner-pdt-H}, it is isometric to the Hilbert space
$H^{1}(  X)  \subseteq L^{2}(  \Omega,\mathcal{F},\bp)  $
which is defined to be the $\vert \cdot\vert _{L^{2}(\Omega)  }$-closure of the set
\[
\left\{  \sum\nolimits_{i=1}^{n}a_{i}X_{t_{i}}:a_{i}\in%
\mathbb{R}
,\text{ }t_{i}\in\left[  0,T\right]  ,\text{ }n\in%
\mathbb{N}
\right\}  .
\]
In particular, we have that $\vert \1_{\left[  0,t\right]  }\vert
_{\mathcal{H}}=\vert X_{t}\vert _{L^{2}\left(
\Omega\right)}$. The isometry between $\ch$ and $H^{1}\left(  X\right)$ is denoted by $X(h)$, and is called a Wiener integral.

\begin{remark}\label{rmk:H-on-subinterval}
As mentioned above in \eqref{eq:def-inner-pdt-H}, the space $\ch$ is a closure of indicator functions. Hence it can be defined on any interval $[a,b]\subset[0,T]$. We denote by $\ch([a,b])$ this restriction. For $[a,b]\subset[0,T]$, one can then check the following identity by a limiting procedure on simple functions
\begin{equation}\label{eq:norm-H-as-2d-young}
\lla f \, \1_{[a,b]} , \, g \, \1_{[a,b]} \rra_{\ch}
=
\lla f  , \, g  \rra_{\ch([a,b])}.
\end{equation}
\end{remark}

\subsection{Malliavin calculus for Gaussian processes}

In this section we review some basic aspects of Malliavin calculus. The reader is referred to~\cite{NuaBook} for further details.

 As in Section~\ref{sec:wiener-space-general}, the family
  $X_t=(X_t^1,\ldots,X_t^d)$ designates a continuous, centered Gaussian process with i.i.d.\ components, defined on a complete probability space $(\Omega, \cf, \bp)$. For sake of simplicity, we assume that $\cf$ is generated by $\{X_{t}; \, t\in[0,T]\}$. An $\mathcal{F}$-measurable real-valued random variable $F$ is said to be cylindrical if it can be
written, for some $m\ge 1$, as
\begin{equation*}
F=f\lp  X_{t_1},\ldots,X_{t_m}\rp,
\quad\mbox{for}\quad
0\le t_1<\cdots<t_m \le 1,
\end{equation*}
where $f:\mathbb{R}^m \rightarrow \mathbb{R}$ is a $C_b^{\infty}$ function. The set of cylindrical random variables is denoted by~$ {S}$. 

\smallskip

\noindent The Malliavin derivative is defined as follows: for $F \in  {S}$, the derivative of $F$ in the direction $h\in\ch$ is given by
\[
\mathbf{D}_h F=\sum_{i=1}^{m}  \frac{\partial f}{\partial
x_i} \left( X_{t_1},\ldots,X_{t_m}  \right) \, \langle h,  \mathbf{1}_{[0,t_i]}\rangle_{\ch}.
\]
More generally, we can introduce iterated derivatives. Namely, if $F \in
 {S}$, we set
\[
\mathbf{D}^k_{h_1,\ldots,h_k} F = \mathbf{D}_{h_1} \cdots\mathbf{D}_{h_k} F.
\]
For any $p \geq 1$, it can be checked that the operator $\mathbf{D}^k$ is closable from
$ {S}$ into $\mathbf{L}^p(\oom;\ch^{\otimes k})$. We denote by
$\mathbb{D}^{k,p}(\ch)$ the closure of the class of
cylindrical random variables with respect to the norm
\[
\left\| F\right\| _{k,p}=\left(  {E}\left[|F|^{p}\right]
+\sum_{j=1}^k  {E}\left[ \left\| \mathbf{D}^j F\right\|
_{\ch^{\otimes j}}^{p}\right] \right) ^{\frac{1}{p}},
\]
and we also set $\mathbb{D}^{\infty}(\ch)=\cap_{p \geq 1} \cap_{k\geq 1} \mathbb{D}^{k,p}(\ch)$. The divergence operator $\delta^{\diamond}$ is then defined to be the adjoint operator of $\mathbf{D}$. Namely, 
for a process $u = \{u_{t}; t\in [0,T]\}$ in the domain of $\delta^{\diamond}$ we have 
\begin{align}\label{eqn.ibp}
E[ \delta^{\diamond}(u) F ] = E[ \langle \mathbf{D}F, u\rangle_{\ch} ],
\end{align}
for all $F\in \mathbb{D}^{1,2}$. Notice that if $u \in \mathbb{D}^{1,2} (\ch)$, then we also have $u\in \dom( \delta^{\diamond}) $. A typical elementary increment which can be represented thanks to the divergence operator is the following: for $(s,t)\in\cs_{2}([0,T])$ and $1\le i \le d$ we have 
\begin{align}\label{eqn.dx}
\delta X^{i}_{st} = \delta^{\diamond} ( \mathbf{1}_{[s,t]} e_{i} ),
\end{align}
 where $e_{i}$ denotes the $i$-th element of the canonical basis in $\R^{d}$.

We close this section by recalling the following result on Hermite polynomials:

\begin{proposition}\label{Fin-Dim-Gaussian}
Let $X,Y$ be jointly normal random variables with $X,Y\sim \mathcal N(0,1)$ and denote by $H_n$ the $n$th Hermite polynomial.  Then the following holds true:
\begin{equation}
		E[H_n(X) H_m(Y)]=\begin{cases}
		0&\text{ if } n\neq m\\
		\frac{1}{n!}\lp E\lc XY \rc\rp^{n}&\text{ if } n=m .
		\end{cases}
	\end{equation}
\end{proposition}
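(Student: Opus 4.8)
The final statement is Proposition~\ref{Fin-Dim-Gaussian}, the classical Hermite orthogonality relation for jointly Gaussian pairs. The approach I would take rests on the generating-function characterization of Hermite polynomials, together with the fact that for jointly normal $X,Y\sim\mathcal N(0,1)$ the whole joint distribution is encoded in the single number $\rho:=E[XY]$. First I would recall the generating identity
\begin{equation*}
\sum_{n=0}^\infty \frac{t^n}{n!}\,H_n(x)=\exp\!\lp tx-\tfrac{t^2}{2}\rp,
\end{equation*}
which will let me package all the pairs $(n,m)$ into one computation rather than treating each order separately.

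Next I would form the double generating function and take the expectation, i.e. compute
\begin{equation*}
\sum_{n,m\ge 0}\frac{s^n t^m}{n!\,m!}\,E\lc H_n(X)H_m(Y)\rc
=E\lc \exp\!\lp sX-\tfrac{s^2}{2}\rp\exp\!\lp tY-\tfrac{t^2}{2}\rp \rc.
\end{equation*}
The right-hand side is the moment generating function of the Gaussian vector $(X,Y)$ evaluated at $(s,t)$, up to the normalizing exponentials. Since $sX+tY$ is centered Gaussian with variance $s^2+t^2+2st\rho$, I would evaluate $E[\exp(sX+tY)]=\exp\!\lp\tfrac12(s^2+t^2+2st\rho)\rp$, so that after multiplying by $\exp(-\tfrac{s^2}{2}-\tfrac{t^2}{2})$ the whole right-hand side collapses to $\exp(st\rho)$.

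The final step is simply to match power series. Expanding $\exp(st\rho)=\sum_{k\ge 0}\frac{(st)^k\rho^k}{k!}$ and comparing the coefficient of $s^nt^m$ on both sides forces $E[H_n(X)H_m(Y)]=0$ whenever $n\ne m$, while for $n=m$ the coefficient of $(st)^n$ gives $\frac{1}{n!\,n!}E[H_n(X)H_n(Y)]=\frac{\rho^n}{n!}$, i.e. $E[H_n(X)H_n(Y)]=\frac{1}{n!}\rho^n$ with $\rho=E[XY]$, which is exactly the claim. I do not expect a serious obstacle here: the only point requiring a little care is justifying the interchange of expectation and the infinite sums, which is handled by absolute convergence (the Gaussian moments grow slowly enough that the double series converges absolutely for all real $s,t$, by dominated convergence applied to partial sums). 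Everything else is a matter of reading off coefficients.
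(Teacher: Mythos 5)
The paper does not actually prove this proposition: it is stated as a recalled classical fact (it is Lemma~1.1.1 of Nualart's book, cited as \cite{NuaBook}), so there is no in-paper argument to compare against. Your generating-function strategy is the standard, essentially canonical, proof of this orthogonality relation: forming $E\left[\exp\left(sX-\tfrac{s^2}{2}\right)\exp\left(tY-\tfrac{t^2}{2}\right)\right]=\exp(st\rho)$ with $\rho=E[XY]$ and matching power series is exactly the right idea, and your justification of the interchange of expectation and summation by absolute convergence is adequate (Cauchy--Schwarz gives $E[|H_n(X)H_m(Y)|]\le\sqrt{n!\,m!}$ under your normalization, so the double series converges absolutely).

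There is, however, a concrete error in your last step, traceable to a normalization mismatch. From the identity
\[
\sum_{n,m\ge0}\frac{s^nt^m}{n!\,m!}\,E\left[H_n(X)H_m(Y)\right]=\sum_{k\ge0}\frac{\rho^k}{k!}(st)^k ,
\]
comparing coefficients of $s^nt^n$ gives $\frac{1}{(n!)^2}E[H_n(X)H_n(Y)]=\frac{\rho^n}{n!}$, hence $E[H_n(X)H_n(Y)]=n!\,\rho^n$, \emph{not} $\frac{1}{n!}\rho^n$ as you conclude; the implication you draw from your own displayed equation is off by a factor $(n!)^2$. The source of the discrepancy is your choice of generating function: the convention $\sum_n\frac{t^n}{n!}H_n(x)=e^{tx-t^2/2}$ pins $H_n$ to be the un-normalized probabilists' Hermite polynomials (so $H_2(x)=x^2-1$), for which the orthogonality constant is $n!\rho^n$. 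The proposition, following Nualart, takes $H_n(x)=\frac{(-1)^n}{n!}e^{x^2/2}\frac{d^n}{dx^n}e^{-x^2/2}$, whose generating function is $\sum_n t^n H_n(x)=e^{tx-t^2/2}$ \emph{without} the $1/n!$; with that convention your argument goes through verbatim and the coefficient of $s^nt^m$ yields $E[H_n(X)H_m(Y)]=\frac{\rho^n}{n!}\,\mathbf{1}_{\{n=m\}}$ exactly as claimed. So the proof is salvageable by a one-line correction of the generating function, but as written the final conclusion does not follow from the preceding computation. (You are in good company: the paper itself writes $H_2(x)=x^2-1$ in the proof of Lemma~\ref{lemma:F-bound} while invoking the $\frac{1}{n!}\rho^n$ formula, which belongs to the other normalization; this only perturbs inessential constants there, but it is the same trap.)
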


\section{The trapezoid rule}\label{sec.3}

This section is devoted to a complete statement and proof of the informal Theorem~\ref{thm:cvgce-trapezoid-intro}. We will first analyze some discrete sums in a finite chaos related to our rough path $\mathbf{X}$ in Section~\ref{subseq:lp-bounds}, then move to some useful weighted sums in Section~\ref{sec:bounds-weighted-sums}. Eventually the main part of our proof will be achieved in Section~\ref{sec:proof-main-thm}. 

Throughout the section we consider a general centered Gaussian process $X$ which satisfies  Hypothesis \ref{hyp.w} . In particular, the covariance function $R$ is defined by \eqref{eq:def-covariance-X} and the variance of the increments $\delta X_{st}^i$ is denoted by $\sigma^2 (s,t)$ (see our notation~\eqref{eq:def-variance-Xt}). As in Section \ref{sec:preliminary-material}, $X$ admits a rough path lift $\mathbf{X}$.
We also label a notation which will be useful for our future computations. 

\begin{notation}\label{notation.dk}
Let $[s,t] \times [u,v]$ be a generic rectangle in $[0,T]^{2}$. Consider a grid-like partition $\cp = \{ [t_{k}, t_{k+1}] \times [\tilde{t}_{k'}, \tilde{t}_{k'+1}]; s=t_{0}<\cdots< t_{m}=t, u=\tilde{t}_{0}<\cdots <\tilde{t}_{n} =v \}$. Then we set $D_{kk'} = [t_{k}, t_{k+1}] \times [ \tilde{t}_{k'}, \tilde{t}_{k'+1} ] $. 
\end{notation}

\subsection{Two inequalities}\label{sec:preliminary-results}

We first derive an inequality on 2-d Young integrals   which we make extensive use of. It is an elaboration of the Young-Loeve-Towghi inequality \cite[Theorem 1.2]{To}. Recall that for $y\in C([0,T]^{2})$,   $y^{su}_{tv}$ denotes the rectangular increment of $y$ over $[s,t]\times[u,v]$   defined in~\eqref{eq:rect-increment-cov-fct}. 
\begin{lemma}\label{lemma.YLT}
	Let $z\in C^{\rho\text{-var}}([0,T]^2,\mathbb{R}^d)$ and consider a function $y$ sitting in the space 
	$C^{\theta\text{-var}}([0,T]^2, \cl(\mathbb{R}^d,\mathbb{R}^d))$ with $1/\rho+1/\theta>1$, where $C^{\rho\text{-var}}$ and $C^{\theta\text{-var}}$ are given in Definition \ref{2d-rho-var-space}. For some given  $s<t<\si$ and $u<v<\eta$ we set $D=[s,\si]\times [u,\eta]$ and   
	\begin{align}\label{eqn.yh}
\hat{y}_{tv} := \int_{[s,t]\times[u,v]} y^{sr }_{ur'}dz_{r r'}  ,
\end{align}
 Then   the $\rho$-variation of $\hat{y}$ on $D$ can be bounded as follows:
	\begin{equation}\label{eq:YLT}
		\left\| \hat{y} \right\|_{\rho\text{-var}, D}\le C\cdot\|y \|_{\theta\text{-var};D}\cdot \|z\|_{\rho\text{-var};D}.
	\end{equation} 
\end{lemma}
\begin{proof}
We consider partitions $s=t_{0}<\cdots<t_{m}=\si$ and $u=\tilde{t}_{0}<\cdots<\tilde{t}_{n}=\eta$ of $[s,\si]$ and $[u, \eta]$, respectively.  Recall from Notation \ref{notation.dk} that  we denote $D_{kk'}=[t_{k}, t_{k+1}]\times [\tilde{t}_{k'}, \tilde{t}_{k'+1}] \subset D$, and write 
\begin{align*}
\hat{y}(D_{kk'}) = \hat{y}_{\tilde{t}_{k'}\tilde{t}_{k'+1}}^{t_{k}t_{k+1}} = \hat{y}_{t_{k+1}\tilde{t}_{k'+1}}-\hat{y}_{t_{k+1}\tilde{t}_{k'}}-\hat{y}_{t_{k}\tilde{t}_{k'+1}}+\hat{y}_{t_{k}\tilde{t}_{k'}}. 
\end{align*}
Using expression \eqref{eqn.yh}, one can decompose $\hat{y}(D_{kk'})$ according to our partition in the following way: 
\begin{eqnarray}\label{eqn.i}
\hat{y}(D_{kk'}) &=& \int_{D_{kk'}} y_{ur'}^{sr}dz_{rr'}
\nonumber
\\
&=&  \int_{D_{kk'}} y_{\tilde{t}_{k'}r'}^{t_{k}r}dz_{rr'}+\int_{D_{kk'}} y_{\tilde{t}_{k'}r'}^{st_{k}}dz_{rr'}+\int_{D_{kk'}} y_{u\tilde{t}_{k'}}^{t_{k}r}dz_{rr'}+\int_{D_{kk'}} y_{u\tilde{t}_{k'}}^{st_{k}}dz_{rr'}
\nonumber
\\
&:=& I_{1}+ I_{2}+ I_{3}+ I_{4} . 
\end{eqnarray}

 
It should be noticed that the term $I_{1}$ above can be bounded directly thanks to the Young-Loeve-Towghi inequality \cite[Theorem 1.2]{To}. We get
\begin{align}\label{eqn.i1}
|I_{1}| \leq C\|y\|_{\theta\text{-var}, D_{kk'}} \cdot \|z\|_{\rho\text{-var}, D_{kk'}} .
\end{align}
The term $I_{4}$ can also be treated easily. Indeed, we have 
\begin{align*}
I_{4} = y^{st_{k}}_{u\tilde{t}_{k'}}\cdot z^{t_{k}t_{k+1}}_{\tilde{t}_{k}\tilde{t}_{k+1}},
\end{align*}
and thus
\begin{align}\label{eqn.i4}
|I_{4}| \leq   \|y \|_{\theta\text{-var} , [u,\tilde{t}_{k'}]\times [s, t_{k} ]}   \cdot \|z\|_{\rho\text{-var}, D_{kk'}} . 
\end{align}

We now focus on the 1-d type integral $I_{2}$ in equation \eqref{eqn.i}. In order to bound this term one can use the classical Young inequality \cite{Young} in order to get  
\begin{eqnarray}\label{eqn.i2}
|I_{2}|  &=& 
\left| \int_{[\tilde{t}_{k'}, \tilde{t}_{k'+1}]} y^{st_{k}}_{\tilde{t}_{k'}r'} d (z_{t_{k+1}r'} - z_{t_{k}r'})   \right| 
\nonumber
\\
&\leq& 
C \, \|y^{st_{k}}_{\tilde{t}_{k'} \cdot} \|_{\theta\text{-var} ,   [\tilde{t}_{k'}, \tilde{t}_{k'+1}]}   
\cdot \|z_{t_{k+1} \cdot} - z_{t_{k} \cdot} \|_{\rho\text{-var}, [ \tilde{t}_{k'}, \tilde{t}_{k'+1}]}
\nonumber
\\
&\leq&  
C \, \|y \|_{\theta\text{-var} , [s,t_{k}]\times [\tilde{t}_{k'}, \tilde{t}_{k'+1}]}   \cdot \|z\|_{\rho\text{-var}, D_{kk'}}. 
\end{eqnarray}
In the same way, we can also upper bound the term $I_{3}$ in \eqref{eqn.i} as 
\begin{align}\label{eqn.i3}
|I_{3}| \leq  C  \|y \|_{\theta\text{-var} , [u,\tilde{t}_{k'}]\times [t_{k}, t_{k+1}]}   \cdot \|z\|_{\rho\text{-var}, D_{kk'}}. 
\end{align}
Plugging \eqref{eqn.i1}-\eqref{eqn.i3} into \eqref{eqn.i}, we have thus obtained that 
\begin{align*}
|\hat{y}(D_{kk'})| \leq & C\Big\{
\|y\|_{\theta\text{-var}, D_{kk'}} 
+
   \|y \|_{\theta\text{-var} , [s,t_{k}]\times [\tilde{t}_{k'}, \tilde{t}_{k'+1}]}   
\\
&\qquad
+    \|y \|_{\theta\text{-var} , [u,\tilde{t}_{k'}]\times [t_{k}, t_{k+1}]}   
+
   \|y \|_{\theta\text{-var} , [u,\tilde{t}_{k'}]\times [s, t_{k} ]}   
\Big\}\cdot \|z\|_{\rho\text{-var}, D_{kk'}} .
\end{align*}
Therefore by trivial monotonicity properties of $\te$-variations, we arrive at 
 \begin{align}\label{eqn.yh1}
|\hat{y}(D_{kk'})|  
\leq C   \|y  \|_{\theta\text{-var} , D}    \cdot \|z\|_{\rho\text{-var}, D_{kk'}}. 
\end{align}
As  explained  in Remark \ref{remark.rho}, we will skip   the routine procedure  of   replacing  $\rho$ by $\rho'$ in order to apply super-additivity relations to 2-dimensional $\rho$-variations. Hence summing relation~\eqref{eqn.yh1} over $k,k'$ and invoking super-additivity, we finally prove the desired inequality ~\eqref{eq:YLT}. 
\end{proof}

 We close this section by giving a general convergence lemma for a sequence of stochastic processes. It is borrowed from \cite[Lemma 3.5]{Euler}.
\begin{lemma}\label{G existence}
	Let $\{z^{n}, n\in \mathbb{N}\}$ be a sequence of stochastic processes such that 
	$$
	\|\delta z^n_{st}\|_{L^{p}(\Omega)}\le C_p n^{-\alpha}(t-s)^\beta,
 	$$ 
	for all $p\ge 1$, where $K_p$ is a constant depending on $p$ and where we recall the notation $\delta$ given in Definition~\ref{def:delta-on-C1-C2}. Then for $0<\gamma<\beta$ and $\kappa>0$, we can find an integrable random variable $G_{\gamma,\kappa}$ independent of $n$ and admitting moments of any order, such that:
	\begin{equation}
	\|z^n\|_\gamma\le G_{\gamma, \kappa} \, n^{-\alpha+\kappa} .
	\end{equation}
\end{lemma}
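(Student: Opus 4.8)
The plan is to combine the Garsia--Rodemich--Rumsey (GRR) inequality, which converts the moment bounds on $\delta z^n_{st}$ into a pathwise control of the H\"older seminorm $\|z^n\|_\gamma$, with a summation argument that dominates the supremum over $n$ by a convergent series and thereby produces a single random variable $G_{\gamma,\kappa}$ controlling all the $\|z^n\|_\gamma$ simultaneously. First I would fix a large exponent $q$ (whose size is specified below) and apply GRR to the continuous paths $z^n$ with $\Psi(x)=x^q$ and $p(u)=u^{\gamma+2/q}$. Since $\gamma>0$, the resulting radial integral converges and one obtains the pathwise estimate
\begin{equation*}
\|z^n\|_\gamma \le C_q \, (U^n)^{1/q},
\qquad\text{where}\qquad
U^n := \int_0^T\int_0^T \frac{|\delta z^n_{st}|^q}{|t-s|^{\gamma q+2}} \, ds\, dt,
\end{equation*}
valid for every $n$ and every realization of $z^n$.

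Next I would estimate the $L^m(\Omega)$ norm of $U^n$ for an arbitrary integer $m\ge 1$. By Minkowski's integral inequality the $L^m(\Omega)$ norm can be moved inside the double integral, and then the hypothesis $\|\delta z^n_{st}\|_{L^{qm}(\Omega)}\le C_{qm}\, n^{-\alpha}(t-s)^\beta$ yields
\begin{equation*}
\|U^n\|_{L^m(\Omega)}
\le \int_0^T\int_0^T \frac{\|\delta z^n_{st}\|_{L^{qm}(\Omega)}^q}{|t-s|^{\gamma q+2}} \, ds\, dt
\le C \, n^{-\alpha q}\int_0^T\int_0^T |t-s|^{(\beta-\gamma)q-2}\, ds\, dt.
\end{equation*}
This is the one place where the assumption $\gamma<\beta$ is genuinely used: the last double integral is finite precisely when $(\beta-\gamma)q>1$, so I would choose $q>1/(\beta-\gamma)$ at the outset. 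Raising the GRR bound to the power $qm$ then gives $E[\|z^n\|_\gamma^{qm}]\le C\, n^{-\alpha q m}$, and since $m$ is arbitrary and $L^p(\Omega)$ norms are monotone in $p$, we conclude that for every $p\ge 1$ there is a constant $C_p$ with
\begin{equation*}
\big\| \, \|z^n\|_\gamma \, \big\|_{L^p(\Omega)} \le C_p \, n^{-\alpha}.
\end{equation*}
It is essential here that the hypothesis is assumed for \emph{all} $p\ge 1$, since the argument consumes moments of the high order $qm$.

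Finally I would take the candidate random variable to be the supremum
\begin{equation*}
G_{\gamma,\kappa} := \sup_{n\ge 1} \, n^{\alpha-\kappa}\, \|z^n\|_\gamma ,
\end{equation*}
which by construction satisfies $\|z^n\|_\gamma\le G_{\gamma,\kappa}\, n^{-\alpha+\kappa}$ for every $n$. To see that it has moments of all orders, I would bound the supremum by the associated series: for any $p$ with $\kappa p>1$,
\begin{equation*}
E\big[G_{\gamma,\kappa}^{\,p}\big]
\le \sum_{n\ge 1} n^{(\alpha-\kappa)p}\, E\big[\|z^n\|_\gamma^p\big]
\le C_p \sum_{n\ge 1} n^{-\kappa p} <\infty,
\end{equation*}
the series converging exactly because $\kappa p>1$. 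Moments of order $p\le 1/\kappa$ then follow from the monotonicity of $L^p(\Omega)$ norms, so $G_{\gamma,\kappa}$ is integrable and admits moments of every order, as claimed. The only real technical point is the GRR step together with the verification that the exponents fit, namely that $q$ can be chosen large enough both to realize the H\"older exponent $\gamma$ and to keep the double integral convergent through $\gamma<\beta$; everything afterwards is a routine Minkowski-plus-summation computation.
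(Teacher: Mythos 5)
Your proof is correct and follows essentially the same route as the source the paper cites for this lemma (\cite{Euler}, Lemma 3.5): a Garsia--Rodemich--Rumsey estimate converting the moment hypothesis into bounds $\big\|\,\|z^n\|_\gamma\,\big\|_{L^p(\Omega)}\le C_p n^{-\alpha}$, followed by dominating $G_{\gamma,\kappa}=\sup_n n^{\alpha-\kappa}\|z^n\|_\gamma$ by a convergent series. The exponent bookkeeping ($q>1/(\beta-\gamma)$ for the double integral, $\kappa p>1$ for the series) is right; the only implicit assumption is continuity of the paths $z^n$, which GRR requires and which is implicit in the finiteness of $\|z^n\|_\gamma$.
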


\subsection{Upper-bounds for processes in a finite chaos}\label{subseq:lp-bounds}
\noindent With the notions of Section \ref{sec:preliminary-material} in hand, we now introduce a family of processes defined as sums of iterated integrals of $X$ which appear naturally in the analysis of the approximation~\eqref{eq:trap}. We start by proving a bound on sums of L\'evy area type processes which generalizes \cite{HLN2, HLN3, Euler}. We first label a notation for further use. 

\begin{notation}\label{not:discrete-interval}
Let $\cp=\{0=t_0<\cdots<t_{n}=T\}$ be a partition of $[0,T]$. Take $s,t\in [0,T]$. Then $\llbracket s,t\rrbracket\colon = \{t_k\in \cp \colon t_k \in [s,t]\}$. We denote $\cs_k(\llbracket s,t\rrbracket)=\{(t_1,\ldots,t_k)\in \cp \colon t_1 \leq \cdots\leq t_k\}$ as the discrete simplex.
\end{notation}

The main bound involving L\'evy area type objects is the following. 
\begin{lemma}\label{lemma:F-bound}
	Suppose that Hypothesis \ref{hyp.w} holds true for the $\R^{d}$-valued Gaussian process $X=(X^1,\ldots,$ $X^d)$   with covariance function $R$,   2-d control   $\omega_{R}$ and $\rho \in [1,2)$.    Consider a partition $\{t_k: 0 \leq k\leq n\}$ of $[0,T]$ and define the process $F$ on $\llbracket 0,T\rrbracket$ by
	\begin{equation}\label{F}
	F_t^{ij}=\begin{cases}
	\sum\limits_{0\leq t_k<t}  X_{t_k t_{k+1}}^{2,ij}&\text{ for }i\neq j
	\vspace{.4cm}\\
	\sum\limits_{0\leq t_k<t}X_{t_k t_{k+1}}^{2,ii}-E[X_{t_kt_{k+1}}^{2,ii}]&\text{ for }i=j  ,
	\end{cases}
	\end{equation}
	with the convention that $F_0^{ij}=0$. Then for any $p\geq 1$ there exists a strictly positive constant $C=C_{p}$ such that for all $s,t\in \cs_2  $ and $0\leq \ep\leq 2-\rho$ we have
	\begin{equation}\label{eq:F-bound}
		\|\delta F_{st}^{ij}\|_{p}^{2}\le C \max_{k,k'} 
	\omega_{R}(D_{kk'})^{\frac{ \ep}{\rho} }
	 \cdot  \omega_{R}( [s,t]^{2})^{\frac{2-\ep}{\rho}} ,
	\end{equation}
	where $\|\cdot\|_p$ denotes the $L^p(\Omega)$ norm and where the rectangle $D_{kk'}$ is defined in Notation \ref{notation.dk}.
\end{lemma}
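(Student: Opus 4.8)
The plan is to reduce the estimate to a variance computation via hypercontractivity, and then to recast this variance as a sum of two-dimensional Young integrals of rectangular increments of $R$, each of which can be controlled cell by cell through Lemma~\ref{lemma.YLT}. First I would observe that for each pair $(i,j)$ the increment $\delta F^{ij}_{st}=\sum_{s\le t_k<t}\bar X^{2,ij}_{t_kt_{k+1}}$, where $\bar X^{2,ij}$ denotes the (possibly centered) iterated integral in \eqref{F}, lives in the second homogeneous Wiener chaos. Indeed, for $i\ne j$ the components $X^i,X^j$ are independent, so $X^{2,ij}_{st}$ is a genuine second-order Wiener integral; for $i=j$ the raw quantity $X^{2,ii}_{st}=\frac{1}{2}(\delta X^i_{st})^2$ carries a nonzero mean, and it is precisely the centering $X^{2,ii}-E[X^{2,ii}]$ that projects it onto the second chaos. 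Since $\delta F^{ij}_{st}$ then belongs to a fixed chaos, the equivalence of $L^p$-norms there (hypercontractivity) gives $\|\delta F^{ij}_{st}\|_p\le C_p\|\delta F^{ij}_{st}\|_2$, so it suffices to bound the variance $\|\delta F^{ij}_{st}\|_2^2$.

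Next I would expand the variance as the double sum $\|\delta F^{ij}_{st}\|_2^2=\sum_{k,k'}E[\bar X^{2,ij}_{t_kt_{k+1}}\bar X^{2,ij}_{t_{k'}t_{k'+1}}]$ and compute each covariance explicitly. For $i\ne j$, independence of the components together with Wick's formula yields $E[\bar X^{2,ij}_{t_kt_{k+1}}\bar X^{2,ij}_{t_{k'}t_{k'+1}}]=\int_{D_{kk'}}R^{t_ku}_{t_{k'}v}\,dR(u,v)$, a 2-d Young integral whose integrand is the rectangular increment \eqref{eqn.rcov} of $R$ and whose integrator is $R$ itself; for $i=j$, the Hermite orthogonality of Proposition~\ref{Fin-Dim-Gaussian} reduces the covariance to $\frac{1}{2}\bigl(R^{t_kt_{k+1}}_{t_{k'}t_{k'+1}}\bigr)^2$. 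In both situations the key point is that each summand is controlled by the $\rho$-variation of $R$ on the single cell $D_{kk'}$ of Notation~\ref{notation.dk}: applying Lemma~\ref{lemma.YLT} (or directly the Young--Loeve--Towghi inequality, noting that $R^{t_k\cdot}_{t_{k'}\cdot}$ vanishes on the lower-left edges of $D_{kk'}$ so that no boundary terms survive) together with Hypothesis~\ref{hyp.w}, I would obtain the uniform per-cell bound $|E[\bar X^{2,ij}_{t_kt_{k+1}}\bar X^{2,ij}_{t_{k'}t_{k'+1}}]|\le C\,\omega_{R}(D_{kk'})^{2/\rho}$.

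Finally I would carry out the exponent bookkeeping. Writing $\sum_{k,k'}\omega_{R}(D_{kk'})^{2/\rho}\le\max_{k,k'}\omega_{R}(D_{kk'})^{\ep/\rho}\sum_{k,k'}\omega_{R}(D_{kk'})^{(2-\ep)/\rho}$, the remaining sum carries exponent $(2-\ep)/\rho\ge 1$ exactly when $\ep\le 2-\rho$, which is where the hypothesis $0\le\ep\le 2-\rho$ enters. Under this condition the super-additivity of the control $\omega_{R}$ (Hypothesis~\ref{hyp.w}), combined with the fact that the cells $\{D_{kk'}\}$ tile $[s,t]^2$, gives $\sum_{k,k'}\omega_{R}(D_{kk'})^{(2-\ep)/\rho}\le\omega_{R}([s,t]^2)^{(2-\ep)/\rho}$, and \eqref{eq:F-bound} follows after combining with the hypercontractivity reduction.

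I expect the main obstacle to lie in the covariance identification of the second step, and in particular in making the diagonal $i=j$ computation fully rigorous: one must verify that the centering in \eqref{F} is exactly what places $\bar X^{2,ii}$ in the second chaos and triggers the Hermite orthogonality of Proposition~\ref{Fin-Dim-Gaussian}, and that the resulting contraction is again a squared rectangular increment of $R$ controlled by $\omega_{R}(D_{kk'})^{2/\rho}$. By contrast, the exponent interpolation, while delicate in pinning down the admissible range of $\ep$, is comparatively routine once the per-cell estimate and the super-additivity of $\omega_{R}$ are in hand.
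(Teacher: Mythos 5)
Your proposal is correct and follows essentially the same route as the paper's proof: hypercontractivity to reduce to $p=2$, Hermite orthogonality for the diagonal case, identification of the off-diagonal covariance as a $2$-d Young integral of the rectangular increment $R^{t_k\cdot}_{t_{k'}\cdot}$ against $dR$ bounded cell by cell via the Young--Loeve--Towghi estimate, and the $\max$/super-additivity split of the exponent under $0\le\ep\le 2-\rho$. The only cosmetic difference is that the paper justifies the covariance identity for $i\neq j$ by a Gaussian regularization of $X$ together with Fubini's theorem rather than by invoking Wick's formula directly, since $X^{2,ij}$ is defined as a rough-path limit rather than a priori as a Wiener integral.
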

\begin{proof}
	By hypercontractivity for random variables in the second chaos (see \cite[Theorem 1.4.1]{NuaBook}), we just need to consider $p=2$. Furthermore, when $i=j$ notice that
	\begin{equation}\label{eq:second-order}
		X_{t_kt_{k+1}}^{2,ii}=\int_{t_k}^{t_{k+1}}\int_{t_k}^{u_1}dX_{u_2}^idX_{u_1}^i=\frac{1}{2}\lp\delta X_{t_kt_{k+1}}^{i}\rp^2, 
	\end{equation}
	where a complete justification of \eqref{eq:second-order} is due to the Definition \ref{def:RP} and the fact that $\mathbf{X}$ is assumed to be geometric. Therefore one can recast the definition of $\delta F^{ii}_{t_{k}t_{k+1}}$ in~\eqref{F} as
		\begin{eqnarray}\label{eq:hermite}
	\delta F^{ii}_{t_{k}t_{k+1}} =	X_{t_k t_{k+1}}^{2,ii}-E[X_{t_kt_{k+1}}^{2,ii}]&=&\frac{1}{2}(X_{t_kt_{k+1}}^{1,i})^2-\frac{1}{2} E[(X_{t_kt_{k+1}}^{1,i})^2]
		\notag\\
		&=&
		\frac{1}{2}
		\si_{t_{k}t_{k+1}}^{2}
		H_2\left(\frac{X_{t_kt_{k+1}}^{1,i}}{\si_{t_{k}t_{k+1}}}\right),
	\end{eqnarray}
	where $H_2(x)=x^2-1$ is the second Hermite polynomial and where the notation $\si_{st}^{2}$ has been introduced in \eqref{eq:def-variance-Xt}. Then putting together relations~\eqref{F}, \eqref{eq:second-order} and \eqref{eq:hermite} we have
\begin{align*}
&		\| \delta F_{st}^{ii}\|_2^2= E\left[\sum_{k}\left(X_{t_k t_{k+1}}^{2,ii}-E[X_{t_kt_{k+1}}^{2,ii}]\right)
\sum_{k'}\left(X_{t_{k'} t_{k'+1}}^{2,ii}-E[X_{t_{k'} t_{k'+1}}^{2,ii}]\right)\right]\\
&=
\frac14
E\left[\sum_{k} \si^{2}_{t_{k}t_{k+1}}
H_2\left(\frac{X_{t_kt_{k+1}}^{1,i}}{\si_{t_{k} t_{k+1}}}\right)\sum_{k'}
\si^{2}_{t_{k'}t_{k'+1}}
H_2\left(\frac{X_{t_{k'}t_{k'+1}}^{1,i}}{\si_{t_{k'} t_{k'+1}}}\right)\right].
\end{align*}
Next, expanding the double sum in $k,k'$ above we get
\begin{equation}\label{a1}
\|\delta F_{st}^{ii}\|_2^2 
=\frac{1}{4}\sum_{k,k'}\si_{t_{k},t_{k+1}}^{2} \si_{t_{k'},t_{k'+1}}^{2}
E\left[H_2\left(\frac{X_{t_kt_{k+1}}^{1,i}}{\si_{t_{k},t_{k+1}}}\right)H_2\left(\frac{X_{t_{k'}t_{k'+1}}^{1,i}}{\si_{t_{k'},t_{k'+1}}}\right)\right] .
\end{equation}
Since each $X_{t_kt_{k+1}}^{1,i}$ is a Gaussian random variable, we can now apply Proposition~\ref{Fin-Dim-Gaussian} with $X=\si_{t_{k} t_{k+1}}^{-1}X_{t_kt_{k+1}}^{1,i}$ and 
$Y=\si_{t_{k'} t_{k'+1}}^{-1} X_{t_{k'}t_{k'+1}}^{1,i}$. This yields
\begin{eqnarray}\label{eq:ii-bound}
	\| \delta F_{st}^{ii}\|_2^2\notag&=&
	\frac{1}{8}\sum_{k,k'}\si_{t_{k},t_{k+1}}^{2} \si_{t_{k'},t_{k'+1}}^{2}
	\left(E\left[\frac{X_{t_kt_{k+1}}^{1,i} \, X_{t_{k'}t_{k'+1}}^{1,i}}{\si_{t_{k},t_{k+1}} \, \si_{t_{k'},t_{k'+1}}}
	\right]\right)^2\\
	&=&\frac{1}{8}\sum_{k,k'}\left(R^{t_kt_{k+1}}_{t_{k'}t_{k'+1}}\right)^2 .
\end{eqnarray}
Notice that the sum on the right hand side of \eqref{eq:ii-bound} is a sum over rectangles $D_{k,k'}=[t_k,t_{k+1}]$ $\times [t_{k'},t_{k'+1}]$. 
Since we have $|R_{t_{k'} t_{k'+1}}^{t_k t_{k+1}}|\leq \omega_{R}(D_{kk'})^{1/\rho} $ thanks to Hypothesis \ref{hyp.w}, we obtain
\begin{eqnarray}\label{eq:ii-bound-2a}
\|\delta F_{st}^{ii}\|_2^2&\le& \frac{1}{8}\sum_{k,k'}\omega_{R}(D_{kk'})^{2/\rho}
\\
\label{eq:ii-bound-2}
	&\le& \frac{1}{8} \max_{k,k'} 
	\omega_{R}(D_{kk'})^{\frac{ \ep}{\rho} }
	 \cdot \sum_{k,k'}\omega_{R}(D_{kk'})^{\frac{2-\ep}{\rho}} .
\end{eqnarray}
Now recall that in our statement we have chosen  $0\leq \ep\leq 2-\rho$, which yields $(2-\ep)/\rho>1$. Hence invoking the sup-additivity of $\omega_{R}$ and thus of $\omega_{R}^{(2-\ep)/\rho}$,  we arrive at
\begin{equation}\label{eq:Fii}
\|\delta F^{ii}_{st}\|_2^2 \le 
\frac{1}{8} \max_{k,k'} 
	\omega_{R}(D_{kk'})^{\frac{ \ep}{\rho} }
	 \cdot  \omega_{R}( [s,t]^{2})^{\frac{2-\ep}{\rho}} . 
\end{equation}
\noindent Putting together inequality \eqref{eq:Fii} and the aforementioned hypercontractivity argument, our claim \eqref{eq:F-bound} is proved for $i=j$. 

\noindent Let us now handle the case $i\neq j$. Similarly to what we did in \eqref{a1}, we compute
\begin{eqnarray}\label{eq:Fij-before}
	\|\delta F_{st}^{ij}\|_2^2\notag&=&\sum_{k,k'}
	E\lc X_{t_kt_{k+1}}^{2,ij}X_{t_{k'} t_{k'+1}}^{2,ij}\rc \\
	&=&\sum_{k,k'}E\left[\int_{t_k}^{t_{k+1}}X_{t_k r}^{1,i}dX_r^{1,j}\int_{t_{k'}}^{t_{k'+1}}X_{t_{k'} r'}^{1,i}dX_{r'}^{1,j}\right] .
\end{eqnarray}
In order to compute the right and side of \eqref{eq:Fij-before} we proceed as in \cite[p. 402]{FrizBook}. Namely, we consider a Gaussian regularization $X^\epsilon$ of $X$ whose rough path lift $\mathbf{X}^\epsilon$ also converges to $\mathbf{X}$. Let us write $R^\epsilon$ for the covariance function of the process $X^{\epsilon}$ and $F^\epsilon$ be the process $F$ defined similarly to \eqref{F} for the process $X^\epsilon$. A simple application of Fubini's theorem yields
\begin{eqnarray}\label{eq:Fij-regularized}
	\|\delta F_{st}^{\epsilon,ij}\|_2^2\notag&=&\sum_{k,k'}E\left[\int_{t_k}^{t_{k+1}}\int_{t_{k'}}^{t_{k'+1}}X_{t_{k'} r'}^{\epsilon,1,i}X_{t_k r}^{\epsilon,1,i}dX_r^{\epsilon,1,j} dX_{r'}^{\epsilon,1,j}\right]\\
	&=&\sum_{k,k'}\int_{D_{k,k'}}R^{\epsilon, t_k r}_{t_{k'}r'} \, dR^{\epsilon}(r,r'),
\end{eqnarray}
where we recall that $D_{k,k'}=[t_k,t_{k+1}]\times [t_{k'},t_{k'+1}]$. Taking limits in \eqref{eq:Fij-regularized} as $\epsilon\to 0$ we get
\begin{equation}\label{eq:Fij-after}
	\|\delta F^{ij}_{st}\|_{2}^{2}=
	\sum_{k,k'}\int_{D_{k,k'}}R^{t_k r}_{t_{k'}r'}  dR(r,r'). 
\end{equation} 
Then a direct application of  inequality \eqref{eq:YLT} yields 
 \begin{equation}\label{eq:F-f-bound}
	\|\delta F^{ij}_{st}\|_2^2\le C \sum_{k,k'}   \|R\|_{\rho\text{-var};D_{k,k'}}^{2}, 
\end{equation}%
  which in turn implies  relation \eqref{eq:ii-bound-2a} thanks to Hypothesis \ref{hyp.w}. Starting from \eqref{eq:F-f-bound}, we can thus conclude as we did for \eqref{eq:ii-bound-2} and \eqref{eq:Fii} in the case $i=j$. 
Our result \eqref{eq:F-bound}
is now shown for the case $i\neq j$, which concludes our proof. 
\end{proof}
\noindent We now state an elaboration of Lemma \ref{lemma:F-bound} for third-order integrals.

\begin{lemma}\label{lemma:g-bound}
Suppose that Hypothesis \ref{hyp.w} holds for the Gaussian process $X=(X^1,\ldots,X^d)$   with covariance function $R$, 2-d control $\omega_{R}$ and $\rho \in [1,2)$. 
For   $i,j,\ell\in \{1,\ldots,d\}$, $(s,t) \in \cs_2 $ and a generic partition $\{t_{i}; 0\leq k\leq n\}$ of $[0,T]$ we denote 
\begin{align}\label{eqn.g}
 \delta g_{st}^{ij\ell} =\sum_{s\leq t_k<t} X^{3,ij\ell}_{t_{k}t_{k+1}}.
\end{align}
 	Then for any $p\geq 2$ there exists a   positive constant $C=C_{\rho,p}$ such that for all $(s,t)\in \cs_2 $ we have
	\begin{equation}\label{eq:g-bound}
		\|\delta g_{st}^{ij\ell} \|_{L^{p}(\Omega)}^{2}
		\le
		 C   \sum_{k,k'}  
    \omega_{R}(D_{kk'})^{3/\rho}
    + C  \left|
   \sum_{k,k'}    
    R_{t_{k}t_{k+1}}^{t_{k}t_{k+1}}R_{t_{k'}t_{k'+1}}^{t_{k'}t_{k'+1}}R_{t_{k}t_{k+1}}^{t_{k'}t_{k'+1}} 
\right|,  
	\end{equation} 
	where we recall our Notation \ref{notation.dk} for $D_{kk'}$.   
\end{lemma}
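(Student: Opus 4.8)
The plan is to follow the strategy used for the Lévy-area estimate of Lemma~\ref{lemma:F-bound}, now upgraded to the third-order object $\delta g^{ij\ell}_{st}$. Since each $X^{3,ij\ell}_{t_kt_{k+1}}$ belongs to a finite sum of Wiener chaoses (of order at most three), hypercontractivity \cite[Theorem 1.4.1]{NuaBook} reduces the bound to the case $p=2$. I would then expand the square as
\begin{equation*}
\|\delta g^{ij\ell}_{st}\|_2^2 = \sum_{k,k'} E\big[ X^{3,ij\ell}_{t_kt_{k+1}} \, X^{3,ij\ell}_{t_{k'}t_{k'+1}} \big],
\end{equation*}
where both sums run over the grid points of $\ll s,t\rr$, so that the whole problem becomes the estimation of these block covariances.

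Next, exactly as on \cite[p.~402]{FrizBook} and in the case $i\neq j$ of Lemma~\ref{lemma:F-bound}, I would introduce a Gaussian regularization $X^{\ep}$ whose rough path lift converges to $\mathbf X$, write each $X^{\ep,3,ij\ell}$ as a random triple integral of $\dot X^{\ep}$, and use Fubini to turn the covariance into a six-fold integral of $E[\dot X^i_v \dot X^j_u \dot X^\ell_r \dot X^i_{v'} \dot X^j_{u'} \dot X^\ell_{r'}]$ subject to the ordering constraints $v<u<r$ and $v'<u'<r'$. Letting $\ep\to0$ replaces the regularized covariances by Young integrals against $dR$. The combinatorial core is then Wick's formula (the Gaussian diagram formula, see \cite{NuaBook}): because the components of $X$ are i.i.d., only the pairings matching identical component indices survive, and these split into two structurally different families.

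The \emph{connected} pairings link a variable of the $k$-block to a variable of the $k'$-block in every factor; each such term is a nested $2$-d Young integral
\begin{equation*}
\int_{\substack{t_k<v<u<r<t_{k+1}\\ t_{k'}<v'<u'<r'<t_{k'+1}}} dR(v,v')\,dR(u,u')\,dR(r,r')
\end{equation*}
(together with its admissible index permutations), which I would bound by applying Lemma~\ref{lemma.YLT} three times with $\theta=\rho$ — legitimate since $\rho<2$ forces $1/\rho+1/\rho>1$ — peeling off one factor of $\|R\|_{\rho\text{-var};D_{kk'}}$ at each stage. Invoking Hypothesis~\ref{hyp.w} then yields the per-block bound $\omega_{R}(D_{kk'})^{3/\rho}$, and summing over $k,k'$ produces the first term of \eqref{eq:g-bound}. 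The \emph{self-contracted} pairings occur only when two of the indices $i,j,\ell$ coincide: they pair the two like-indexed variables inside each block and leave a single cross factor, integrating (up to a combinatorial constant) to $R^{t_kt_{k+1}}_{t_kt_{k+1}}\,R^{t_{k'}t_{k'+1}}_{t_{k'}t_{k'+1}}\,R^{t_kt_{k+1}}_{t_{k'}t_{k'+1}}$. Since the cross covariance $R^{t_kt_{k+1}}_{t_{k'}t_{k'+1}}$ has no definite sign, this whole contribution must be kept inside a single absolute value, giving the second term of \eqref{eq:g-bound}.

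I expect the main obstacle to be the bookkeeping of the Wick pairings in the repeated-index case, together with the ordering constraint $v<u<r$ which entangles a self-contracted pair $(v,u)$ with the remaining variable $r$ and so blocks a clean factorization. One must argue — in the spirit of \eqref{eq:second-order}--\eqref{eq:hermite}, where $X^{2,ii}$ was rewritten through the Hermite polynomial $H_2$ — that the ordered diagonal integrals reproduce the variance factors $R^{t_kt_{k+1}}_{t_kt_{k+1}}$, while the residual ordered pieces are again of connected type and are thus reabsorbed into the $\omega_{R}(D_{kk'})^{3/\rho}$ sum. Once the two families are separated and each summed over $k,k'$, the bound \eqref{eq:g-bound} follows.
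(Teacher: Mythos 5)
Your overall architecture coincides with the paper's: hypercontractivity to reduce to $p=2$, expansion into block covariances, regularization plus Fubini, and a Wick/Hermite splitting of the pairings according to the coincidence pattern of $(i,j,\ell)$, with Lemma \ref{lemma.YLT} absorbing the fully cross-paired contributions into $\sum_{k,k'}\omega_{R}(D_{kk'})^{3/\rho}$ and the diagonal contribution in the case $i=j=\ell$ producing the absolute-value term. That much is sound and matches the paper step for step.

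The gap is in the case where exactly two of the indices coincide, say $i=j\neq\ell$. There the self-contraction does \emph{not} integrate to the endpoint quantity $R^{t_kt_{k+1}}_{t_kt_{k+1}}R^{t_{k'}t_{k'+1}}_{t_{k'}t_{k'+1}}R^{t_kt_{k+1}}_{t_{k'}t_{k'+1}}$ --- that clean factorization happens only for $i=j=\ell$, via $X^{3,iii}_{t_kt_{k+1}}=\tfrac16(\delta X^i_{t_kt_{k+1}})^3$ and Hermite orthogonality. Instead one is left with
\begin{equation*}
\sum_{k,k'}\int_{D_{kk'}} R^{t_ku}_{t_ku}\,R^{t_{k'}u'}_{t_{k'}u'}\,dR(u,u')
\qquad\text{and}\qquad
\sum_{k,k'}\int_{D_{kk'}} \bigl(R^{t_ku}_{t_{k'}u'}\bigr)^2\,dR(u,u'),
\end{equation*}
and neither integrand is ``of connected type,'' so Lemma \ref{lemma.YLT} cannot simply be iterated on $R$ itself as you do for the fully distinct case. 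The missing ingredient is a 2-d variation estimate for these integrands: one must decompose the rectangular increments of $\varphi(u,u')=(R^{t_ku}_{t_{k'}u'})^2$ and $\psi(u,u')=R^{t_ku}_{t_ku}R^{t_{k'}u'}_{t_{k'}u'}$ into products of rectangular increments of $R$ and use the super-additivity of $\omega_{R}$ to show that $\|\varphi\|_{\rho\text{-var};D_{kk'}}$ and $\|\psi\|_{\rho\text{-var};D_{kk'}}$ are bounded by $\omega_{R}(D_{kk'})^{2/\rho}$; only then does one final application of Lemma \ref{lemma.YLT} yield the per-block bound $\omega_{R}(D_{kk'})^{3/\rho}$. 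This is the substantive technical step of the proof in the mixed case, and it is not supplied by the assertion that the residual pieces are ``reabsorbed'' into the connected sum.
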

\begin{proof}
	Along the same lines as Lemma \ref{lemma:F-bound}, by hypercontractivity of random variables in the third chaos, we just need to consider $p=2$. We focus on this case in the remainder of the proof.
	We split our considerations in  several steps. 
	
\noindent   \textbf{Step 1:   equal indices.}\quad  In this step,  we show that  \eqref{eq:g-bound} holds when the indices are equal: $i=j=\ell$.  We first note that  using geometricity similarly to what we have done  in~\eqref{eq:second-order}, we get  $X_{t_{k} t_{k+1}}^{3,iii}=\frac{1}{6}(\delta X_{t_kt_{k+1}}^i)^3$. 
We now expand the cubic power in terms of Hermite polynomials along the same lines as for \eqref{eq:hermite}. Namely, recall that $x^{3} = H_{3}(x)-3x$. 
Therefore, renormalizing $(\delta X_{t_{k}t_{k+1}}^{i})^{3}$ by $\si_{t_{k}t_{k+1}}$ (recall that $\si_{st}$ is defined by \eqref{eq:def-variance-Xt}), we get 
\begin{align}\label{eqn.zeta}
X^{3,iii}_{t_{k}t_{k+1}} = \frac16 \si^{3}_{t_{k}t_{k+1}} H_{3} (\si_{t_{k}t_{k+1}}^{-1} \delta X_{t_{k}t_{k+1}}^{i}) +\frac12 \si_{t_{k}t_{k+1}}^{2} \delta X_{t_{k}t_{k+1}}^{i}. 
\end{align}

Let us now go back to our expression \eqref{eqn.g} for $i=j=\ell$. By linearity of the expected value we have 
\begin{align}\label{eqn.gn}
\|\delta g_{st}^{iii} \|_2^2  =  \sum_{k,k'} E \lc
X_{t_k t_{k+1}}^{3,iii}X_{t_{k'} t_{k'+1}}^{3,iii}\rc . 
\end{align}
Plugging relation \eqref{eqn.zeta} into \eqref{eqn.gn}, invoking Proposition \ref{Fin-Dim-Gaussian} and recalling that $\si_{t_{k}t_{k+1}}^{2} = R^{t_{k}t_{k+1}}_{t_{k}t_{k+1}} $, we obtain:
\begin{eqnarray}\label{eq:iii-bound}
\|\delta g_{st}^{iii} \|_2^2 &=&
\notag
 \frac{1}{216}   \sum_{k,k'}  
    \lp R_{t_{k}t_{k+1}}^{t_{k'}t_{k'+1}}\rp^{3}  + \frac14  \sum_{k,k'}    R_{t_{k}t_{k+1}}^{t_{k}t_{k+1}}R_{t_{k'}t_{k'+1}}^{t_{k'}t_{k'+1}}R_{t_{k}t_{k+1}}^{t_{k'}t_{k'+1}} 
 \\
& \leq & C   \sum_{k,k'}  
   \omega_{R}(D_{kk'})^{3/\rho}
   + C  \left|
   \sum_{k,k'}   ( R_{t_{k}t_{k+1}}^{t_{k}t_{k+1}}R_{t_{k'}t_{k'+1}}^{t_{k'}t_{k'+1}}R_{t_{k}t_{k+1}}^{t_{k'}t_{k'+1}}) 
\right|
    ,
\end{eqnarray}
where the last inequality stems from Hypothesis \ref{hyp.w} and where we have used our Notation~\ref{notation.dk} for $D_{kk'}$. 
We have thus proved our claim \eqref{eq:g-bound} for $p=2$ and $i=j=\ell$. As mentioned above, the general case $p\geq 2$ follows by hypercontractivity. 

  \noindent   \textbf{Step 2:  three distinct indices.}   We   turn to the proof of \eqref{eq:g-bound} for   $i,j,\ell$ all  distinct. To this aim, we first note that a regularization procedure similar to the one which lead to \eqref{eq:Fij-after} yields
\begin{align*}
	\|\delta g_{st}^{ij\ell} \|_2^2&=\sum_{k,k'}E[X_{t_{k} t_{k+1}}^{3,ij\ell}X_{t_{k'} t_{k'+1}}^{3,ij\ell}]\\
	&=\sum_{k,k'}E\left[\int_{t_k}^{t_{k+1}}\int_{t_k}^{u}X_{t_k,v}^{1,i}dX_{v}^{1,j}dX_u^{1,\ell}\int_{t_{k'}}^{t_{k'+1}}\int_{t_{k'}}^{u'}X_{t_{k'},v'}^{1,i}dX_{v'}^{1,j}dX_{u'}^{1,\ell}\right]\\
	&=\sum_{k,k'}E\left[\int_{t_k}^{t_{k+1}}\int_{t_{k'}}^{t_{k'+1}}\int_{t_k}^u\int_{t_{k'}}^{u'}X_{t_k,v}^{1,i}X_{t_{k'},v'}^{1,i}dX_{v}^{1,j}dX_{v'}^{1,j}dX_{u}^{1,\ell}dX_{u'}^{1,\ell}\right],
\end{align*}
	for all $(s,t)\in \cs_2(\llbracket 0,T\rrbracket)$. Then using independence and Fubini's theorem (here again the standard  regularization arguments are outlined in \eqref{eq:Fij-after}) we get
	\begin{align}\label{eqn.49}
		\|\delta g_{st}^{ij\ell} \|_2^2=\sum_{k,k'}\int_{t_k}^{t_{k+1}}\int_{t_{k'}}^{t_{k'+1}}\int_{t_k}^u\int_{t_{k'}}^{u'}R^{t_k v}_{t_{k'} v'} dR(v,v') dR(u,u').
	\end{align}
Now applying twice inequality \eqref{eq:YLT} to the right side of \eqref{eqn.49}, we easily get 
\begin{equation}\label{eqn.51}
		\|\delta g_{st}^{ij\ell} \|_2^2\le C \sum_{k,k'} \omega_{R}(D_{kk'})^{3/\rho}
    .
\end{equation}
Our claim \eqref{eq:g-bound} is now proved for $i,j,\ell$ distinct.

 \noindent   \textbf{Step 3: two distinct indices.} 	  We now turn to the case of two distinct indices in $i,j,\ell$. In fact we should divide this case into $3$ distinct subcases, namely $i=j\neq \ell$, $i\neq j= \ell$ and $i=\ell\neq j$. We will only treat the first case $i=j\neq \ell$, the other ones being similar and left to the patient reader.
	
	  
	  We proceed as previously, resorting to the geometric property of $\mathbf{X}$, some regularization arguments, and Fubini's theorem. Recalling Notation \ref{notation.dk} for the intervals $   [t_{k}, t_{k+1}] \times [ \tilde{t}_{k'}, \tilde{t}_{k'+1} ] $, we get  
	\begin{eqnarray*}	\|\delta g_{st}^{ii\ell} \|_2^2 &=&\sum_{k,k'}E\left[X_{t_{k} t_{k+1}}^{3,ii\ell}X_{t_{k'} t_{k'+1}}^{3,ii\ell}\right]\\
	&=&\frac{1}{4}\sum_{k,k'}\int_{D_{kk'}} E\lc(X_{t_k,u}^{1,i})^2(X_{t_{k'},u'}^{1,i})^2\rc dR(u,u') .
	\end{eqnarray*}
In order to evaluate the term $E[(X_{t_k,u}^{1,i})^2(X_{t_{k'},u'}^{1,i})^2]$ above, we reproduce the steps leading from~\eqref{eq:hermite} to \eqref{eq:ii-bound} in the proof of Lemma \ref{lemma:F-bound} (based on Hermite polynomial decompositions). This yields 
	\begin{eqnarray}\label{eq:g-iil-bound-initial}
	\|\delta g_{st}^{ii\ell} \|_2^2  :=I_{1}+I_{2},
	\end{eqnarray}
	where the terms $I_{1}$ and $I_{2}$ are defined by 
	\begin{align}\label{eqn.I12}
I_{1} = \frac{1}{4}\sum_{k,k'}\int_{D_{kk'}} 
	R_{t_{k}u}^{t_{k}u}R_{t_{k'}u'}^{t_{k'}u'}
	 dR(u,u') 
	 \qquad \text{and}
\qquad	 I_{2} = \frac{1}{4}\sum_{k,k'}\int_{D_{kk'}} 
(R_{t_{k}u}^{t_{k'}u'})^{2}
	dR(u,u') . 
\end{align}
In the following we bound the two quantities $I_{1}$ and $I_{2}$. 

In order to bound the term $I_{2}$ in \eqref{eqn.I12}, let us set  $\varphi(u,u')= (R_{t_{k}u}^{t_{k'}u'})^{2}$. We
will first estimate the $\rho$-var norm of $\varphi$. To this aim, we decompose the rectangular increments  of $\varphi$ as follows 
\begin{align}\label{eqn.vp}
  \varphi_{vu}^{v'u'}  
&=   R_{vu}^{v'u'} ( R_{t_{k}u}^{t_{k'}v'} +R_{t_{k}v}^{t_{k'}v'}  )+ 
R_{vu}^{t_{k'}u'} ( R_{t_{k}u}^{v'u'} +R_{t_{k}v}^{v'u'}  ) .
\end{align}
From this decomposition it is readily checked using Hypothesis \ref{hyp.w} that for $ [v,u]\times[v',u']\subset D_{kk'}$  we have 
\begin{multline}\label{eqn.52}
| \varphi_{vu}^{v'u'} |
\leq 
C
 \omega_{R}([v,u]\times[v',u'])^{1/\rho}
  \cdot \omega_{R}(D_{kk'})^{1/\rho} \\
 +
 C
 \omega_{R}([v,u] \times [t_{k'}, t_{k'+1}])^{1/\rho}\cdot
  \omega_{R}([t_{k}, t_{k+1}]\times[v',u'])^{1/\rho}
  .
\end{multline}
This inequality can be used in order to evaluate the 2-d $\rho$-var norm of $\varphi$ over the rectangle~$D_{kk'}$. Indeed,  
let $\cp$ and $\cp'$ be   partitions of $[t_{k}, t_{k+1}]$ and $[t_{k'}, t_{k'+1}]$, respectively.  
From~\eqref{eqn.52} we have
\begin{multline*}
\sum_{(v,u)\in \cp, (v',u')\in \cp'}
| \varphi_{vu}^{v'u'} |^{\rho}
\leq \omega_{R}(D_{kk'}) 
\sum_{(v,u)\in \cp, (v',u')\in \cp'}  
\omega_{R}([v,u]\times[v',u'])
 \\
+
 \sum_{(v,u)\in \cp} 
\omega_{R}( [v,u]
\times [t_{k'}, t_{k'+1}] )
\cdot
\sum_{ (v',u')\in \cp'} 
\omega_{R}([t_{k}, t_{k+1}]\times[v',u'])
 \, ,
\end{multline*}
which, by the super-additivity of $\omega_{R}$, easily yields
\begin{equation*}
\sum_{(v,u)\in \cp, (v',u')\in \cp'}
| \varphi_{vu}^{v'u'} |^{\rho}
\leq 
\omega_{R}(D_{kk'})^{2} . 
\end{equation*}
Since $\cp$ and $\cp'$ are generic partitions of $[t_{k}, t_{k+1}]$, this implies that
\begin{align}\label{eqn.53}
\|\varphi\|_{\rho\text{-var}; D_{kk'}} \leq  \omega_{R}(D_{kk'})^{2/\rho} .
\end{align}
With relation \eqref{eqn.53} in hand, we can now establish a bound for $I_{2}$. Indeed, recall that $\varphi (u,u') = (R_{t_{k}u}^{t_{k'}u'})^{2}$. In particular, we have $\varphi (t_{k}, u')=0$ and $\varphi(u,t_{k'})=0$, and we get
\begin{align*}
\int_{D_{kk'}} (R_{t_{k}u}^{t_{k'}u'})^{2}dR(u,u') = \int_{D_{kk'}} \varphi_{t_{k}u}^{t_{k'}u'} dR(u,u'). 
\end{align*}
Therefore, 
plugging \eqref{eqn.53} in the definition \eqref{eqn.I12} of $I_{2}$ and  applying Lemma \ref{lemma.YLT},   we end up with
\begin{align}\label{eqn.54ii}
|I_{2} | \leq C\cdot \sum_{k,k'} \omega_{R}(D_{kk'})^{3/\rho} .
\end{align}

The estimation of $I_{1}$ can be done along the same lines as for $I_{2}$. Namely, we define a function   $\psi(u,u') = R_{t_{k}u}^{t_{k}u}R_{t_{k'}u'}^{t_{k'}u'}$. Then the rectangular increments of $\psi$ can be decomposed as  
\begin{align*}
\psi_{vu}^{v'u'}  
&
 = 
 (R_{t_{k}u}^{vu} + R_{t_{k}v}^{vu} )( R_{t_{k'}u'}^{v'u'}+  R_{t_{k'}v'}^{v'u'}  ).
\end{align*}
from which we can deduce 	\begin{align}\label{eqn.psi.rec}
|\psi_{vu}^{v'u'}  |  
 &\leq C \cdot
 \omega_{R}([t_{k}, t_{k+1}]\times [v,u])^{1/\rho}\cdot
 \omega_{R}([t_{k'}, t_{k'+1}]\times [v',u'])^{1/\rho}
   . 
\end{align}
Starting from \eqref{eqn.psi.rec} we can proceed  as in \eqref{eqn.53}-\eqref{eqn.54ii}. We arrive at  
\begin{align*}
\|\psi\|_{\rho\text{-var}; D_{kk'}}  \leq C\omega_{R}(D_{kk'})^{2/\rho} .
\end{align*}
Now applying Lemma \ref{lemma.YLT} again we end up with
	\begin{align}\label{eqn.54i}
|I_{1}|&\leq C\sum_{k,k'}  \omega_{R}(D_{kk'})^{3/\rho} 
 .
\end{align}
	Let us conclude our estimates for this step: plugging \eqref{eqn.54i} and \eqref{eqn.54ii} into \eqref{eq:g-iil-bound-initial} 
	we have obtained
	\begin{align}\label{eqn.giil}
\|\delta g^{ii\ell}_{st}\|_{2}^{2} \leq C \sum_{k,k'} \omega_{R} (D_{kk'})^{3/\rho} .
\end{align}

	 \noindent   \textbf{Step 4: Conclusion.} 	 
Let us summarize our considerations so far. 
Gathering the upper bounds  \eqref{eq:iii-bound}, \eqref{eqn.51} and \eqref{eqn.giil}   we have proved relation  \eqref{eq:g-bound} for all possible values of $i,j,\ell \in \{1,\dots, d\}$ and $p=2$.   Recall again that the general case $p\geq 2$ is obtained by hypercontractivity, which finishes our proof.
\end{proof}

Let $g$ be the increment in relation \eqref{eqn.g}. We now wish to obtain an upper bound for $g$ similar to the bound \eqref{eq:F-bound} we have derived for $F$. This is the content of the next proposition. 
 
\begin{proposition}\label{prop.g}
Let $X$ and $g$ be as in Lemma \ref{lemma:g-bound}. Let $\theta>1$ be such that $\frac{1}{\theta}+\frac{1}{\rho}=1$.
  Then for all $i,j,\ell \in \{1,\dots, d\}$ and $(s,t)\in \cs_{2}$ we have 
\begin{multline}\label{eqn.57}
\|\delta g_{st}^{ij\ell}\|_{p }^{2} \\
\leq
 C
   \max_{k,k'}
    \omega_{R}(D_{kk'})^{3/\rho-1} 
       \cdot
       \omega_{R}([s,t]^{2}) 
        + C  \max_{k} \omega_{R}(D_{kk})^{2(1/\rho-1/\theta)}  
\cdot
\omega_{R} ( [s,t]^{2})^{2 /\theta+1/\rho}  .
\end{multline}
In particular, for $\ep $ such that  $ 0\leq\ep \leq (3-\rho)\wedge (2-2\rho/\theta)$ we have
\begin{align}\label{eqn.58}
\|\delta g_{st}^{ij\ell}\|_{p}^{2} \leq   \max_{k,k'} 
\omega_{R}(D_{kk'})^{\frac{\ep}{\rho}}
   \cdot 
\omega_{R}([s,t]^{2})^{\frac{3-\ep}{\rho}} .
\end{align}

\end{proposition}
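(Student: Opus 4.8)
The plan is to start from the two-term estimate \eqref{eq:g-bound} of Lemma \ref{lemma:g-bound} and bound each of its two summands separately, each one producing one of the terms in \eqref{eqn.57}. Throughout I would freely use the super-additivity and monotonicity of the control $\omega_R$ (so that $\sum_i \omega_R(D_i) \le \omega_R([s,t]^2)$ whenever the $D_i$ are disjoint sub-rectangles of $[s,t]^2$), together with the pointwise bound $|R_{t_k t_{k+1}}^{t_{k'} t_{k'+1}}| \le \omega_R(D_{kk'})^{1/\rho}$ coming from Hypothesis \ref{hyp.w}. Since the right-hand side of \eqref{eq:g-bound} is deterministic, the whole argument is an exercise in re-organizing sums of controls, and the $L^p$-norm merely rides along.

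For the first summand $\sum_{k,k'}\omega_R(D_{kk'})^{3/\rho}$, I note that $\rho<2$ forces $3/\rho-1>0$, so I may factor out a maximum and invoke super-additivity over the grid:
\[
\sum_{k,k'}\omega_R(D_{kk'})^{3/\rho}
\le \max_{k,k'}\omega_R(D_{kk'})^{3/\rho-1}\sum_{k,k'}\omega_R(D_{kk'})
\le \max_{k,k'}\omega_R(D_{kk'})^{3/\rho-1}\,\omega_R([s,t]^2),
\]
which is exactly the first term of \eqref{eqn.57}.

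The core of the work is the second summand of \eqref{eq:g-bound}. Writing $a_k := R_{t_k t_{k+1}}^{t_k t_{k+1}} = \sigma_{t_k t_{k+1}}^2 \ge 0$ and $b_{kk'} := R_{t_k t_{k+1}}^{t_{k'} t_{k'+1}}$, this term is $\big|\sum_{k,k'} a_k a_{k'} b_{kk'}\big|$. The plan is to apply Hölder's inequality in the double index $(k,k')$ with the conjugate exponents $\rho$ and $\theta$ (recall $1/\rho+1/\theta=1$), pairing $b_{kk'}$ with the exponent $\rho$:
\[
\Big|\sum_{k,k'} a_k a_{k'} b_{kk'}\Big|
\le \Big(\sum_{k,k'}|b_{kk'}|^\rho\Big)^{1/\rho}\Big(\sum_{k,k'}(a_k a_{k'})^\theta\Big)^{1/\theta}.
\]
The first factor is at most $\omega_R([s,t]^2)^{1/\rho}$ by super-additivity over the grid. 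For the second, I would use $\sum_{k,k'}(a_k a_{k'})^\theta = \big(\sum_k a_k^\theta\big)^2$ and, since $\theta>2>\rho$, factor a maximum off the diagonal squares $D_{kk}=[t_k,t_{k+1}]^2$, which are disjoint in $[s,t]^2$:
\[
\sum_k a_k^\theta \le \max_k a_k^{\theta-\rho}\sum_k a_k^\rho
\le \max_k \omega_R(D_{kk})^{(\theta-\rho)/\rho}\,\omega_R([s,t]^2).
\]
Raising this to the power $2/\theta$ and recognizing the identity $2(\theta-\rho)/(\rho\theta)=2(1/\rho-1/\theta)$, then multiplying back the $b$-factor, yields precisely the second term of \eqref{eqn.57}, with local exponent $2(1/\rho-1/\theta)$ and global exponent $2/\theta+1/\rho$. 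This Hölder bookkeeping — choosing the correct conjugate pair and carrying out the diagonal factorization — is the main obstacle, everything else being routine.

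Finally, \eqref{eqn.58} would follow from \eqref{eqn.57} by re-balancing exponents. Since every $D_{kk'}$ is a sub-rectangle of $[s,t]^2$, monotonicity gives $\omega_R(D_{kk'})\le\omega_R([s,t]^2)$, so for a nonnegative exponent one may transfer excess power from the local factor onto the global factor $\omega_R([s,t]^2)$ while keeping the total degree fixed. Both summands of \eqref{eqn.57} have total degree $3/\rho$, with local exponents $3/\rho-1$ and $2(1/\rho-1/\theta)$ respectively; I would also use $\{D_{kk}\}\subseteq\{D_{kk'}\}$ to replace the diagonal maximum by the full one. Lowering both local exponents to the common value $\epsilon/\rho$ is legitimate exactly when $\epsilon/\rho\le 3/\rho-1$ and $\epsilon/\rho\le 2(1/\rho-1/\theta)$, i.e. $\epsilon\le(3-\rho)\wedge(2-2\rho/\theta)$, which is the stated range; the leftover multiplicative constant is then absorbed into $\omega_R$.
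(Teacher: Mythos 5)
Your proposal is correct and follows essentially the same route as the paper: the same Hölder split with conjugate exponents $\rho$ and $\theta$ pairing the off-diagonal factor with $\rho$, the same diagonal factorization $\sum_k a_k^\theta \le \max_k a_k^{\theta-\rho}\sum_k a_k^\rho$ combined with super-additivity of $\omega_R$, and the same exponent re-balancing to pass from \eqref{eqn.57} to \eqref{eqn.58}. No gaps.
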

\begin{proof}
We first observe that since $\rho<2$ and $\frac{1}{\theta}+\frac{1}{\rho}=1$, we have $\theta>2>\rho$. Applying H\"older's inequality to the second term on the right side of \eqref{eq:g-bound} yields
\begin{align*}
 \Big| 
   \sum_{k,k'}   ( R_{t_{k}t_{k+1}}^{t_{k}t_{k+1}}R_{t_{k'}t_{k'+1}}^{t_{k'}t_{k'+1}}R_{t_{k}t_{k+1}}^{t_{k'}t_{k'+1}}) \Big|
\leq\Big( \sum_{k,k'}  | R_{t_{k}t_{k+1}}^{t_{k}t_{k+1}}R_{t_{k'}t_{k'+1}}^{t_{k'}t_{k'+1}} |^{\theta}\Big)^{1/\theta} 
\cdot
\Big( \sum_{k,k'} |R_{t_{k}t_{k+1}}^{t_{k'}t_{k'+1}}|^{\rho} \Big)^{1/\rho}
 .
\end{align*}
Hence thanks to an elementary algebraic manipulation and according to the definition of $\rho$-variation   we get
\begin{eqnarray*}
 \Big| 
   \sum_{k,k'}   
   ( R_{t_{k}t_{k+1}}^{t_{k}t_{k+1}}R_{t_{k'}t_{k'+1}}^{t_{k'}t_{k'+1}}R_{t_{k}t_{k+1}}^{t_{k'}t_{k'+1}}) 
 \Big|
&\leq&
\lp \sum_{k}  | R_{t_{k}t_{k+1}}^{t_{k}t_{k+1}} |^{\te} \rp^{2/\te}
\omega_{R}([s,t]^{2})^{1/\rho} \\
&=&
\lp \sum_{k}  | R_{t_{k}t_{k+1}}^{t_{k}t_{k+1}} |^{\te-\rho} | R_{t_{k}t_{k+1}}^{t_{k}t_{k+1}} |^{\rho} \rp^{2/\te}
\omega_{R}([s,t]^{2})^{1/\rho}.
\end{eqnarray*}
Thus bounding the term $| R_{t_{k}t_{k+1}}^{t_{k}t_{k+1}} |^{\te-\rho}$ above by $\om_{R}(D_{kk})^{(\te-\rho)/\rho}$ and owing to the super additive property of Hypothesis \ref{hyp.w}, we end up with 
\begin{align}\label{eqn.3r}
 \Big| 
   \sum_{k,k'}   ( R_{t_{k}t_{k+1}}^{t_{k}t_{k+1}}R_{t_{k'}t_{k'+1}}^{t_{k'}t_{k'+1}}R_{t_{k}t_{k+1}}^{t_{k'}t_{k'+1}}) \Big|
&\leq \lp\max_{k} 
 \omega_{R}(D_{kk})^{2(1/\rho-1/\theta)}\rp 
\cdot
 \omega_{R}([s,t]^{2})^{2 /\theta} 
\cdot
 \omega_{R}([s,t]^{2})^{1/\rho}
 \nonumber
  \\
&= \lp\max_{k} \omega_{R}(D_{kk})^{2(1/\rho-1/\theta)}\rp 
\cdot
\omega_{R} ( [s,t]^{2})^{2 /\theta+1/\rho}. 
\end{align}
On the other hand, it is easy to see that   
\begin{align}\label{eqn.w3bd}
  \sum_{k,k'}  
  \omega_{R}(D_{kk'})^{3/\rho}
    \leq   \max_{k,k'}
    \omega_{R}(D_{kk'})^{3/\rho-1} 
       \cdot
       \omega_{R}([s,t]^{2}) 
  .
\end{align}
Gathering \eqref{eqn.3r} and \eqref{eqn.w3bd} in \eqref{eq:g-bound}, this   concludes   relation \eqref{eqn.57}. Relation \eqref{eqn.58} follows immediately from   \eqref{eqn.57} and  the fact that $  \omega_{R}(D_{kk'}) \leq \omega_{R}([s,t]^{2})
 $. 
\end{proof}

In the following, we turn to  the estimate of   another third-chaos functional. 
\begin{lemma}\label{lemma.xf}
Let $X$ and $F$ be as in Lemma \ref{lemma:F-bound}. For $i,j,\ell=1,\dots, d$ and $(s,t)\in \cs_{2}$, we define the increment  $h_{st}^{ij\ell}$ as:  
\begin{align*}
h_{st}^{ij\ell} =\sum_{s\leq t_k<t} X_{st_k}^{1,\ell}\delta F_{t_kt_{k+1}}^{ij}. 
\end{align*}
In addition, consider  $\ep $ such that  $0\leq \ep\leq 2-\rho$. Then  the following inequality holds true: 
\begin{align}\label{eqn.cr}
\|h^{ij\ell}_{st}\|_{p}^{2} \leq    4 \lp
 \omega_{R}([s,t]^{2})^{\frac{3}{\rho}-\frac{2\ep}{\rho}}
  + 
  \omega_{R}([s,t]^{2})^{\frac{3}{\rho}-\frac{\ep}{\rho}}
  \rp
 \cdot \max_{k }
\omega_{R}([t_{k}, t_{k+1}] \times [0,T])^{\frac{2\ep}{\rho}}  
.
\end{align}

\end{lemma}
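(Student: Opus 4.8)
The plan is to reduce everything to an $L^2(\Omega)$ computation and then expand the square in a Gaussian fashion. First I would observe that $X^{1,\ell}_{st_k}$ sits in the first chaos while $\delta F^{ij}_{t_kt_{k+1}}$ sits in the second chaos; hence each summand $X^{1,\ell}_{st_k}\delta F^{ij}_{t_kt_{k+1}}$, and therefore $h^{ij\ell}_{st}$ itself, lives in the sum of the first and third Wiener chaoses. By the same hypercontractivity argument already used in Lemma~\ref{lemma:F-bound} and Lemma~\ref{lemma:g-bound}, it suffices to prove \eqref{eqn.cr} for $p=2$. A structural remark that guides the estimates below is that $h$ vanishes on single intervals, $h^{ij\ell}_{t_kt_{k+1}}=0$, and satisfies the clean increment identity $\delta h^{ij\ell}_{sut}=X^{1,\ell}_{su}\,\delta F^{ij}_{ut}$; thus the thin increment $\delta X^{\ell}$ and the area increment $\delta F^{ij}$ are the two building blocks of the bound.

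Next I would write $\|h^{ij\ell}_{st}\|_2^2=\sum_{k,k'}E\big[X^{1,\ell}_{st_k}\,\delta F^{ij}_{t_kt_{k+1}}\,X^{1,\ell}_{st_{k'}}\,\delta F^{ij}_{t_{k'}t_{k'+1}}\big]$ and evaluate each expectation by Wick's theorem. Two kinds of pairings appear. The \emph{factored} pairing, always present, contributes $E[X^{1,\ell}_{st_k}X^{1,\ell}_{st_{k'}}]\,E[\delta F^{ij}_{t_kt_{k+1}}\delta F^{ij}_{t_{k'}t_{k'+1}}]=R^{st_k}_{st_{k'}}\,C^{F}_{kk'}$, where $C^{F}_{kk'}$ is exactly the $F$-covariance computed and bounded by $C\,\omega_{R}(D_{kk'})^{2/\rho}$ in the proof of Lemma~\ref{lemma:F-bound}. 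The remaining \emph{contraction} pairings, which are nonzero only when $\ell\in\{i,j\}$, contract the first-chaos increment $X^{1,\ell}_{st_k}$ against an inner factor of the iterated integral $\delta F^{ij}_{t_kt_{k+1}}$; after the usual regularization of $X$ (as in the passage leading to \eqref{eq:Fij-after}) these take the form of two-dimensional Young integrals such as $\int_{D_{kk'}}R^{st_k}_{t_kr}R^{st_{k'}}_{t_{k'}r'}\,dR(r,r')$, which I would estimate with Lemma~\ref{lemma.YLT}.

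Finally I would carry out the summation in $(k,k')$ using Hypothesis~\ref{hyp.w} and the super-additivity of $\omega_{R}$. The crucial structural point is that the first-chaos increments are anchored over the \emph{large} interval $[s,t_k]$, so each covariance $R^{st_k}_{st_{k'}}$ is controlled uniformly by $\omega_{R}([s,t]^2)^{1/\rho}$, whereas the smallness is produced by the area part: summing the $F$-contributions over the free index yields the \emph{thin-strip} factor through $\sum_{k'}\omega_{R}(D_{kk'})\le\omega_{R}([t_k,t_{k+1}]\times[0,T])$ together with $D_{kk'}\subset[s,t]^2$. The two summands in \eqref{eqn.cr} then correspond to the factored term and to a boundary-type contribution (equivalently, to the contraction pairings). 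I expect the main obstacle to be precisely this exponent bookkeeping: the power $2/\rho>1$ carried by the $F$-covariance must be split as a strip power $2\ep/\rho$ times a remainder of exponent $\ge 1$ that can be resummed by super-additivity, and making this split valid over the \emph{entire} range $0\le\ep\le 2-\rho$ (rather than only $\ep\le 1-\rho/2$) is what forces the two-term shape of the bound. Once the exponents are matched in this way, collecting the contributions gives \eqref{eqn.cr}.
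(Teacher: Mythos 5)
Your proposal is correct and follows essentially the same route as the paper: reduction to $p=2$ by hypercontractivity, expansion of $\|h_{st}^{ij\ell}\|_2^2$ into a double sum of Gaussian fourth moments, and a split into a factored term $R^{st_k}_{st_{k'}}\cdot E[\delta F^{ij}_{t_kt_{k+1}}\delta F^{ij}_{t_{k'}t_{k'+1}}]$ plus contraction terms supported on $\ell\in\{i,j\}$, the latter estimated as 2-d Young integrals via Lemma \ref{lemma.YLT} and resummed by super-additivity of $\omega_{R}$ with the thin-strip factor $\max_k\omega_{R}([t_k,t_{k+1}]\times[0,T])$. The paper obtains the identical decomposition ($J^1$ versus $J^2,J^3,J^4$) through Malliavin integration by parts and Hermite polynomials rather than Wick's theorem, but these are the same computation, and your remarks on the exponent bookkeeping match the way the two summands of \eqref{eqn.cr} actually arise.
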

\begin{proof}	
As in the proof of Lemma \ref{lemma:g-bound} we should distinguish cases according to possible equalities in the indices $i,j,\ell$. We   focus on the case $i=j$ in Step 1 to 3 below, and then  deal with the case $i\neq j$ in Step 4.  

 \noindent   \textbf{Step 1: A decomposition of $\mathbf{\|h\|_{2}^{2}}$.} 
 As mentioned above, let us first consider the case $i=j$ and find an estimate for $h^{ii\ell}_{st}$. 
 We start by  writing  
\begin{align}\label{eqn.hex}
	E\lc|h_{st}^{ii\ell}|^2\rc&=\sum_{k,k'}E\left[X_{st_k}^{1,\ell}X_{st_{k'}}^{1,\ell}\delta F_{t_kt_{k+1}}^{ii}\delta F_{t_{k'}t_{k'+1}}^{ii}\right].
\end{align}
In addition, recall from \eqref{eqn.dx} and \eqref{eq:hermite} that 
\begin{align}\label{eqn.dsdf}
\delta X^{i}_{st} = \delta^{\diamond} \lp \mathbf{1}_{[s,t]} e_{i} \rp \, ,
\quad\text{and}\quad 
\delta F^{ii}_{t_{k}t_{k+1}} = \frac12 \si^{2}_{t_{k}t_{k+1}} H_{2} \lp \frac{X^{1,i}_{t_{k}t_{k+1}}}{\si_{t_{k}t_{k+1}}} \rp.
\end{align}
Therefore, one can recast \eqref{eqn.hex} as 
\begin{align}\label{eqn.hl2}
E\lc |h^{ii\ell}_{st}|^{2} \rc = 
\sum_{k,k'} \si^{2}_{t_{k}t_{k+1}} \si^{2}_{t_{k'}t_{k'+1}} 
E \lc \delta^{\diamond} \lp \mathbf{1}_{[s,t_{k}]} e_{\ell} \rp Z_{kk'} \rc,
\end{align}
where the random variable $Z_{kk'}$ is defined by 
\begin{align}\label{eqn.z}
Z_{kk'} = X^{1,\ell}_{st_{k}} H_{2} \lp \frac{X^{1,i}_{t_{k}t_{k+1}}}{\si_{t_{k}t_{k+1}}} \rp
H_{2} \lp \frac{X^{1,i}_{t_{k'}t_{k'+1}}}{\si_{t_{k'}t_{k'+1}}} \rp. 
\end{align}
Hence resorting to the integration by parts formula \eqref{eqn.ibp}, we get that: 
\begin{align*}
E\lc |h^{ii\ell}_{st}|^{2} \rc = \sum_{k,k'} \si_{t_{k}t_{k+1}}^{2} \si_{t_{k'}t_{k'+1}}^{2} E \lc \langle \mathbf{1}_{[s, t_{k}]} e_{\ell}, \mathbf{D} Z_{kk'} \rangle_{\ch} \rc,
\end{align*}
where we recall that $e_{\ell}$ stands for the $\ell$-th element of the canonical basis in $\R^{d}$.
Computing the Malliavin derivative of $Z_{kk'}$ (and recalling that $H_{2}'(x)=x$), we let the reader check that we get the formula 
\begin{align}\label{eqn.3j}
E\lc|h_{st}^{ii\ell}|^2\rc =    \sum_{k,k'} \lp J_{kk'}^{1} +J_{kk'}^{2}+J_{kk'}^{3} \rp,
\end{align}
where the terms $J_{kk'}^{1} $, $J_{kk'}^{2}$, $J_{kk'}^{3}$ are respectively defined by 
\begin{align}\label{eqn.j123}
J_{kk'}^{1}& = 
 E  \left[X_{st_k}^{1,\ell}X_{st_{k'}}^{1,\ell}\right] \cdot E\lc\delta F_{t_kt_{k+1}}^{ii}\delta F_{t_{k'}t_{k'+1}}^{ii}\rc
\nonumber
\\
J_{kk'}^{2}&=   \langle \mathbf{1}_{[s,t_{k}]},  \mathbf{1}_{[t_{k},t_{k+1}]}   \rangle_{\ch} \cdot\langle \mathbf{1}_{[s,t_{k'}]},  \mathbf{1}_{[t_{k'},t_{k'+1}]}   \rangle_{\ch} \cdot
\langle  \mathbf{1}_{[t_{k},t_{k+1}]},  \mathbf{1}_{[t_{k'},t_{k'+1}]}   \rangle_{\ch} \cdot \mathbf{1}_{\{i=\ell\}}
\\
J_{kk'}^{3}&= \langle \mathbf{1}_{[s,t_{k}]},  \mathbf{1}_{[t_{k'},t_{k'+1}]}   \rangle_{\ch} \cdot\langle \mathbf{1}_{[s,t_{k }]},  \mathbf{1}_{[t_{k},t_{k +1}]}   \rangle_{\ch} \cdot
\langle  \mathbf{1}_{[t_{k},t_{k+1}]},  \mathbf{1}_{[t_{k'},t_{k'+1}]}   \rangle_{\ch} \cdot  \mathbf{1}_{\{i=\ell\}}.
\nonumber
\end{align}
In the following, we show that  the upper-bound in \eqref{eqn.cr} holds  for each   $J_{kk'}^{a}$, $a=1,2,3$, and therefore concludes the lemma. 

 \noindent   \textbf{Step 2: Estimate for $\mathbf{J_{kk'}^{1}}$.} 
In order to bound $J^{1}_{kk'}$, we use the definition \eqref{eqn.rcov} as well as Hypothesis \ref{hyp.w} in order to get   
\begin{align*}
\lln E [X^{1,\ell}_{st_{k}} X^{1,\ell}_{st_{k'}}  ] \rrn 
= \lln R^{st_{k'}}_{st_{k}}\rrn 
\leq \omega_{R} \lp [s,t_{k}]\times [s,t_{k'}]\rp^{1/\rho} 
\leq \omega_{R}\lp [s,t]^{2}\rp^{1/\rho}.  
\end{align*}
Furthermore, invoking relations \eqref{eq:hermite} and \eqref{eq:ii-bound}, we get
\begin{align*}
\lln  E [ \delta F^{ii}_{t_{k}t_{k+1}}  \delta F^{ii}_{t_{k'}t_{k'+1}}  ] \rrn 
= \frac18 \lp R^{t_{k}t_{k+1}}_{t_{k'}t_{k'+1}}\rp^{2}
\leq   \omega_{R} \lp D_{kk'}\rp^{2/\rho}.  
\end{align*}
Hence resorting to the same arguments 
  as in  Lemma \ref{lemma:F-bound} 
  in order to get inequality \eqref{eq:ii-bound-2}, 
  we get that   for any $\ep\leq 2-\rho$ we have: 
\begin{align}\label{eqn.j1bd}
\sum_{k,k'} |J_{kk'}^{1}| \leq  \omega_{R}([s,t]^{2})^{1/\rho} \cdot  \sum_{k,k'} 
\omega_{R}(D_{kk'})^{2/\rho} \leq 
  \max_{k} \omega_{R}(D_{kk'})^{\frac{\ep}{\rho}} \cdot
\omega_{R} ( [s,t]^{2})^{\frac{3-\ep}{\rho}} . 
\end{align}
Otherwise stated, inequality \eqref{eqn.cr} is satisfied for $\sum_{k,k'}|J_{kk'}^{1}|$. 
 
 \noindent   \textbf{Step 3: Estimate for $\mathbf{J_{kk'}^{2}}$ and $\mathbf{J_{kk'}^{3}}$.} 
We turn to an upper bound of the term $J_{kk'}^{2}$ in~\eqref{eqn.j123}.  
To this aim, 
consider $\theta>2$   such that $\frac{1}{\theta}+\frac{1}{\rho}=1$. Then   applying H\"older's inequality to  the summation in $k,k'$ of \eqref{eqn.j123},   we get
\begin{multline*}
\sum_{k,k'} 
J_{kk'}^{2} 
\leq \lp\sum_{k,k'} 
\Big|  \langle \mathbf{1}_{[s,t_{k}]},  \mathbf{1}_{[t_{k},t_{k+1}]}   \rangle_{\ch} \langle \mathbf{1}_{[s,t_{k'}]},  \mathbf{1}_{[t_{k'},t_{k'+1}]}   \rangle_{\ch} 
\Big|^{\theta}\rp^{1/\theta} \\
\times
\lp\sum_{k,k'} \Big|
\langle  \mathbf{1}_{[t_{k},t_{k+1}]},  \mathbf{1}_{[t_{k'},t_{k'+1}]}   \rangle_{\ch}  \Big|^{\rho}\rp^{1/\rho}.
\end{multline*}
Now we apply the estimate $\langle \mathbf{1}_{[u,v]},  \mathbf{1}_{[u',v']} \rangle_{\ch}\leq \|R\|_{\rho\text{-var}, [u,v]\times [u',v']}$ to the three inner products   in the above inequality. We arrive at    
\begin{align}\label{eqn.j2bd2}
\sum_{k,k'} 
J_{kk'}^{2} 
&\leq  \lp\sum_{k,k'} 
\Big|  \|R\|_{\rho\text{-var}, [t_{k}, t_{k+1}]\times [s,t]} \cdot
 \|R\|_{\rho\text{-var}, [t_{k'}, t_{k'+1}]\times [s,t]} 
\Big|^{\theta} \rp^{1/\theta}
\cdot \lp\sum_{k,k'} \|R\|_{\rho\text{-var}, D_{kk'}}^{\rho}\rp^{1/\rho}
\nonumber 
\\
&\leq 
  \lp\sum_{ k } 
\|R\|_{\rho\text{-var}, [t_{k }, t_{k +1}]\times [s,t]}^{\theta}  \rp^{2/\theta} 
\cdot
\omega_{R}([s,t]^{2})^{1/\rho} 
. 
\end{align}
Similarly to what we have done in the proof of Proposition \ref{prop.g}, we note that   $\theta>2>\rho$. 
Hence combining   Hypothesis \ref{hyp.w} and the super-additivity of $\omega_{R}$ we get  the following estimate for   $0<\ep\leq \theta-\rho$:
\begin{align}\label{eqn.j2bd}
\sum_{k,k'}J_{kk'}^{2} \leq 
\max_{k }
\omega_{R}([t_{k}, t_{k+1}] \times [s,t])^{\ep/\rho} 
\cdot
\max_{k'}
\omega_{R}([t_{k'}, t_{k'+1}] \times [s,t])^{\ep/\rho} 
 \cdot   
\omega_{R}([s,t]^{2})^{\frac{3-2\ep}{\rho}}
. 
\end{align}
This implies that the upper-bound  estimate  \eqref{eqn.cr} holds for $J_{kk'}^{2}$. 
It can also be shown that the same estimate holds for   $J_{kk'}^{3}$. The proof is   similar to that of $J_{kk'}^{2}$ and will be omitted.  
Combining \eqref{eqn.j1bd} and \eqref{eqn.j2bd},    
  this     completes the proof of \eqref{eqn.cr} for $i=j$. 
  
   \noindent   \textbf{Step 4: The case $i\neq j$.}   
   We now turn to an estimate of $h^{ij\ell}_{st}$ when $i\neq j$. 
  To this aim we first write an expression for $E[ |h^{ij\ell}_{st}|^{2} ]$ mimicking   \eqref{eqn.hex}, with the important difference that the term $\delta F^{ij}_{t_{k}t_{k+1}}$ cannot be represented by Hermite polynomials as in \eqref{eqn.dsdf}. Hence the equivalents  for~\eqref{eqn.hl2} and \eqref{eqn.z}  whenever $i\neq j$ is 
  \begin{align}\label{eqn.hl2d}
E\lc |h^{ij\ell}_{st}|^{2} \rc= \sum_{k,k'}   \lc \delta^{\diamond} ( \mathbf{1}_{[s,t_{k}]} e_{\ell} ) \tilde{Z}_{kk'}  \rc,
\end{align}
where 
\begin{align*}
\tilde{Z}_{kk'} = X^{1,\ell}_{st_{k'}} 
\cdot
\delta^{\diamond} ( X^{1,i}_{t_{k}\cdot} 
\cdot
\mathbf{1}_{[t_{k}, t_{k+1}]} e_{j} ) \cdot \delta^{\diamond}  ( X^{1,i}_{t_{k'}\cdot} \cdot
\mathbf{1}_{[t_{k'}, t_{k'+1}]} e_{j} ) . 
\end{align*}
Integrating relation \eqref{eqn.hl2d} by parts similarly to \eqref{eqn.3j}, we end up with 
 \begin{align*}
E\lc |h^{ij\ell}_{st}|^{2} \rc = \sum_{k,k'} (J_{kk'}^{1}+J_{kk'}^{4}),
\end{align*}
where $J^{1}_{kk'}$ has already been defined in \eqref{eqn.j123} and $J^{4}_{kk'}$ is given by 
\begin{align}\label{eqn.j4}
J_{kk'}^{4}= &
\int_{ D_{kk'}}   \lp
 R_{st_{k} }^{t_{k'}u} R_{st_{k'} }^{t_{k }u' }  + R_{st_{k} }^{t_{k }u' } R_{st_{k'} }^{t_{k'}u}  \rp
 dR(u,u') \cdot \mathbf{1}_{\{i=\ell\}} 
 \nonumber
 \\
&+
\int_{ D_{kk'}}   \lp
 R_{st_{k} }^{ut_{k'+1}} R_{st_{k'} }^{u't_{k +1} }  + R_{st_{k} }^{u't_{k +1} } R_{st_{k'} }^{ut_{k'+1}}  \rp
 dR(u,u') \cdot \mathbf{1}_{\{j=\ell\}} 
.
\end{align}
In order to bound $J^{4}_{kk'}$, we set 
  $\phi (u,u') =
R_{st_{k} }^{t_{k'}u} R_{st_{k'} }^{t_{k }u' } $. Then one of the terms in \eqref{eqn.j4} is  $\int_{D_{kk'}} \phi(u,u')dR(u,u')$. 
We wish to bound this term thanks to Lemma \ref{lemma.YLT}. To this aim, similarly to what we did in \eqref{eqn.vp}-\eqref{eqn.52}, we estimate the rectangular increments of $\phi$. We get   
\begin{align*}
|\phi_{uv}^{u'v'}| = 
|R_{st_{k} }^{uv} R_{st_{k'} }^{u'v' }| \leq 
\|R\|_{\rho\text{-var}, [s,t]\times [u,v]}
 \cdot
 \|R\|_{\rho\text{-var}, [s,t]\times [u',v']}
      . 
\end{align*}
Now we consider 
  $\cp$ and $\cp'$ generic  partitions of $[t_{k}, t_{k+1}]$ and $[t_{k'}, t_{k'+1}]$  respectively, as well as $\theta$ such that $\frac{1}{\theta}+\frac{1}{\rho}=1$. Then we have
\begin{align*}
\sum_{[u,v]\times [u',v']\in \cp\times \cp'} |\phi_{uv}^{u'v'}|^{\theta} \leq \sum_{[u,v]\in \cp} \|R\|_{\rho\text{-var}, [s,t]\times [u,v]}^{\theta }
\cdot
 \sum_{[u',v']\in \cp'} \|R\|_{\rho\text{-var}, [s,t]\times [u',v']}^{\theta}
 \\
\leq
\|R\|_{\rho\text{-var}, [s,t]\times [t_{k}, t_{k+1}]}^{\theta}
 \cdot
 \|R\|_{\rho\text{-var}, [s,t]\times [t_{k'}, t_{k'+1}]}^{\theta} 
  .
\end{align*}
Therefore, we obtain
\begin{align}
\label{eqn.phi}
\|\phi\|_{\theta\text{-var}, D_{kk'}}\leq
 \|R\|_{\rho\text{-var}, [s,t]\times [t_{k}, t_{k+1}]} 
 \cdot
 \|R\|_{\rho\text{-var}, [s,t]\times [t_{k'}, t_{k'+1}]}.
\end{align}
Note that the estimate \eqref{eqn.phi} of $\phi$ also holds for the other three functions in the right-hand side of \eqref{eqn.j4}, namely:
\begin{align*}
R_{st_{k} }^{t_{k }u' } R_{st_{k'} }^{t_{k'}u} , \qquad R_{st_{k} }^{ut_{k'+1}} R_{st_{k'} }^{u't_{k +1} }  , \qquad R_{st_{k} }^{u't_{k +1} } R_{st_{k'} }^{ut_{k'+1}}.
\end{align*}
The proof is similar and will be left to the reader. 
With \eqref{eqn.phi} in hand, we can now invoke
  Lemma \ref{lemma.YLT} for the right-hand side of relation \eqref{eqn.j4}. This yields
\begin{align}\label{eqn.j4bd}
|J_{kk'}^{4}| \leq  \|R\|_{\rho\text{-var}, [s,t]\times [t_{k}, t_{k+1}]} 
 \cdot
 \|R\|_{\rho\text{-var}, [s,t]\times [t_{k'}, t_{k'+1}]} \cdot \|R\|_{\rho\text{-var}, D_{kk'}}. 
\end{align}
Starting from \eqref{eqn.j4bd}, we easily get an upper bound similar to  \eqref{eqn.j2bd2} for $\sum_{k,k'}J^{4}_{kk'}$. 
 Then we can proceed as in relation  \eqref{eqn.j2bd}. 
 We conclude that \eqref{eqn.cr} holds for the case $i\neq j$. The proof is now complete. 
\end{proof}

\subsection{Upper-bounds for weighted sums}\label{sec:bounds-weighted-sums}
In this section we give some estimates for weighted sums of the processes $F$ and $g$ defined in the previous subsection. These sums will be a part of our main terms in the analysis of the trapezoid rule.  
\begin{lemma}\label{lem:weighted-F}
	Let $X$ be a $\R^d$ valued   Gaussian process with covariance function $R$ such that Hypothesis \ref{hyp.x} holds with $\rho\in [1,2)$, and therefore Hypothesis~\ref{hyp.w} is guaranteed by Remark~\ref{remark.rho} and subsequent comments. Let $\omega_{R}$ be the control  in  Hypothesis~\ref{hyp.w}. 
	Recall that the increment 
	    $F$ is   defined in \eqref{F}   and fix a partition $\cp$ with mesh $|\cp|$.  	     
We also consider a controlled process of order $1$ according to Definition \ref{def:ctrld-process}, which means that the increments of $y$ can be decomposed as   \begin{align}\label{eqn.y}
y_{st} = y^{1}_{s}X^{1}_{st} + r_{st} \qquad s,t\in [0,T] . 
\end{align}
We call $\omega$ the control  $\omega_{y}$ related to the increments of $y$ in Definition \ref{def:ctrld-process},  and recall that we have  
\begin{align}\label{eqn.y1r}
 |\delta y^{1}_{st}|\leq  \omega(s,t)^{1/p} , 
 \qquad
  |r_{st}|\leq  \omega(s,t)^{2/p},  \qquad \text{for all}\quad (s,t)\in \cs_{2}([0,T]) 
\end{align}
 almost surely. Eventually, we introduce  below a parameter $p$ such that $\frac{1}{p} = \frac{1-\ep}{2\rho}$ for $\ep$ small enough. 
 Then the following holds true:
 
 \noindent\emph{(i)}
  For every $M>0$,  we set $A_{M}=\{\omega(0,T)\leq M\}$. Then for all   $(s,t)\in \cs_2 $ and $(i,j)\in \{1,\ldots,d\}^{2}$ we have:
	\begin{align}\label{eq:weighted-F}
	 E\lc \mathbf{1}_{A_{M}}\cdot	\left|\sum_{s\leq t_k<t} y_{t_k} \delta F_{t_k t_{k+1}}^{ij}\right|\rc
	  \leq 
C  \cdot \max_{k } 
\omega_{R}([t_{k}, t_{k+1}] \times [0,T])^{\frac{\ep}{2\rho}}    
.
	\end{align}
In particular,  \begin{align*}
\sum_{s\leq t_k<t} y_{t_k} \delta F_{t_k t_{k+1}}^{ij} \longrightarrow 0, 
\qquad \text{in probability as $|\cp|\to0$}. 
\end{align*}

\noindent\emph{(ii)} 
	In case of H\"older continuous processes $X$ and $y$, one can improve the convergence as follows. Namely 
	 suppose   that $\delta y^{1}\in \mathcal{C}^{1/p}$ and $r\in \cac^{2/p}$ almost surely and that 
	 \begin{align}\label{eqn.Rholder}
 \omega_{R}([s,t]\times [0,T]) \leq C|t-s|,
   \qquad \text{for all } (s,t)\in \cs_{2}([0,T]) .
\end{align} 
 Furthermore, assume 
   that the uniform partition $0=t_{0}<\cdots<t_{n}=T$ over $[0,T]$ is considered.   Then
	 \begin{align}\label{eqn.xfconv}
\sum_{s\leq t_k<t} y_{t_k} \delta F_{t_k t_{k+1}}^{ij} \longrightarrow 0, 
\qquad \text{almost surely as $n\to\infty$}. 
\end{align}

\end{lemma}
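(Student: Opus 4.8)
The plan is to expand the weighted sum about its left endpoint $s$ by means of the controlled structure of $y$, to isolate two leading terms that are directly governed by the chaos estimates already established, and to absorb the genuine remainder through the discrete sewing Lemma~\ref{lem2.4}. Inserting $y_{t_k}=y_s+y^1_s X^{1}_{st_k}+r_{st_k}$ into the sum and using the additivity of $F$ gives, for $(s,t)\in\cs_2$,
\begin{equation*}
\sum_{s\leq t_k<t} y_{t_k}\,\delta F^{ij}_{t_k t_{k+1}}
= y_s\,\delta F^{ij}_{st}+ y^1_s\, h^{ij\ell}_{st}+ P_{st},
\qquad
P_{st}:=\sum_{s\leq t_k<t} r_{st_k}\,\delta F^{ij}_{t_k t_{k+1}},
\end{equation*}
where $h^{ij\ell}_{st}=\sum_{s\leq t_k<t}X^{1,\ell}_{st_k}\delta F^{ij}_{t_k t_{k+1}}$ is exactly the increment controlled by Lemma~\ref{lemma.xf}.

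On the localizing event $A_M$ the weights $y_s$ and $y^1_s$ are bounded in every $L^q(\Omega)$ by a constant depending only on $M$, so a Cauchy--Schwarz split together with Lemma~\ref{lemma:F-bound} and Lemma~\ref{lemma.xf} bounds the $L^1(A_M)$-norms of the first two terms by $C_M\,\omega_R([s,t]^2)^{(2-\ep)/(2\rho)}\Lambda$ and $C_M\,\omega_R([s,t]^2)^{(3-2\ep)/(2\rho)}\Lambda$, where I abbreviate $\Lambda:=\max_{k}\omega_R([t_k,t_{k+1}]\times[0,T])^{\ep/(2\rho)}$. Bounding the remaining bounded powers of $\omega_R([s,t]^2)$ by $\omega_R([0,T]^2)$ already produces the right-hand side of \eqref{eq:weighted-F} for these two contributions.

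For the genuine remainder I would invoke Lemma~\ref{lem2.4}. Since $r_{t_k t_k}=0$ we have $P_{t_k t_{k+1}}=0$, and the algebraic identity $r_{st_k}-r_{ut_k}=r_{su}+\delta y^1_{su}X^{1}_{ut_k}$ yields
\begin{equation*}
\delta P_{sut}=r_{su}\,\delta F^{ij}_{ut}+\delta y^1_{su}\,h^{ij\ell}_{ut}.
\end{equation*}
Estimating $E[\mathbf 1_{A_M}|\delta P_{sut}|]$ then amounts to pairing the pathwise controlled bounds $|r_{su}|\leq\omega_y(s,u)^{2/p}$ and $|\delta y^1_{su}|\leq\omega_y(s,u)^{1/p}$ with the $L^2$-estimates for $\delta F$ and $h$ of Lemmas~\ref{lemma:F-bound}--\ref{lemma.xf}. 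Working against a common control dominating both $\omega_y$ and the diagonal control $\omega_R([s,t]^2)$, and using the calibration $1/p=(1-\ep)/(2\rho)$, both summands of $\delta P_{sut}$ carry total exponent $(4-3\ep)/(2\rho)$, which exceeds $1$ precisely when $\ep<(4-2\rho)/3$; here the hypothesis $\rho<2$ together with ``$\ep$ small enough'' is consumed, the exponent degenerating to $1$ at $\rho=2$. Lemma~\ref{lem2.4}, applied in the seminorm $Z\mapsto E[\mathbf 1_{A_M}|Z|]$, then gives $E[\mathbf 1_{A_M}|P_{st}|]\leq C_M\,\Lambda\,\omega_R([s,t]^2)^{(4-3\ep)/(2\rho)}$, of strictly smaller order than the main terms. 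Collecting the three pieces proves \eqref{eq:weighted-F}; since $\omega_R$ is continuous and vanishes on degenerate rectangles, $\Lambda\to0$ as $|\cp|\to0$, whence $L^1(A_M)$-convergence to zero, and letting $M\to\infty$ (so that $A_M\uparrow\Omega$) yields convergence in probability. The delicate point throughout is this last estimate: because $\delta F$ is strictly less than $1$-regular once $\rho\ge1$, the sewing cannot close on the chaos bounds alone but must borrow the extra smoothness of $r$ and $\delta y^1$, and doing so forces one to decouple the random weights from the Gaussian-chaos increments (the role of $A_M$ and of the Cauchy--Schwarz split) while retaining their $(s,u)$-regularity against a control comparable to $\omega_R$.

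For part \emph{(ii)} the H\"older hypothesis \eqref{eqn.Rholder} turns $\Lambda$ into $C\,n^{-\ep/(2\rho)}$ on the uniform partition, so every estimate above decays polynomially in $n$. To upgrade convergence in probability to almost sure convergence I would re-run the same decomposition directly in $L^p(\Omega)$, using hypercontractivity for the chaos factors $\delta F,h$ and the assumed H\"older regularity of $\delta y^1$ and $r$, to reach a bound $\big\|\sum_{s\leq t_k<t}y_{t_k}\delta F^{ij}_{t_k t_{k+1}}\big\|_{L^p}\leq C_p\,n^{-\alpha}|t-s|^\beta$ with $\alpha>0$. Setting $z^n_t=\sum_{t_k<t}y_{t_k}\delta F^{ij}_{t_k t_{k+1}}$ and applying Lemma~\ref{G existence} then gives $\|z^n\|_\gamma\leq G\,n^{-\alpha+\kappa}$, so that choosing $\kappa<\alpha$ forces $z^n_T\to0$ almost surely, which is exactly \eqref{eqn.xfconv}.
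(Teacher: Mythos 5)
Your treatment of part (i) is essentially the paper's proof: the same expansion of $y_{t_k}$ via the controlled structure, the same identity $\delta P_{sut}=r_{su}\,\delta F^{ij}_{ut}+\delta y^{1}_{su}\,h^{ij\ell}_{ut}$ (the paper's $\tilde r$ plays the role of your $P$), the same pairing of the pathwise bounds on $r,\delta y^{1}$ with the $L^2$ chaos estimates of Lemmas \ref{lemma:F-bound} and \ref{lemma.xf}, the same exponent $(4-3\ep)/(2\rho)>1$, and the same application of the discrete sewing Lemma \ref{lem2.4} in $L^1(\Omega)$ after truncating on $A_M$. Keeping the extra term $y_s\,\delta F^{ij}_{st}$ (the paper works directly with the increments $\delta y_{st_k}$, so this term does not appear there) is a harmless bookkeeping difference, and your passage from the $L^1(A_M)$ bound to convergence in probability is the intended one.

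Part (ii), however, has a genuine gap as written. You propose to establish $\bigl\|\sum_{s\leq t_k<t}y_{t_k}\,\delta F^{ij}_{t_kt_{k+1}}\bigr\|_{L^p}\leq C_p\,n^{-\alpha}|t-s|^{\beta}$ with a deterministic constant $C_p$ and then feed $z^n$ into Lemma \ref{G existence}. But in part (ii) the weights are only assumed almost surely H\"older, i.e.\ $|r_{su}|\leq \|r\|_{2/p}(u-s)^{2/p}$ with a random, possibly non-integrable, H\"older constant; no moment assumption is made on $y$, $y^{1}$ or $r$. Hence the weighted sum need not lie in $L^p(\Omega)$ at all, and the hypothesis of Lemma \ref{G existence} cannot be verified for $z^n$ itself. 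The paper avoids this by applying Lemma \ref{G existence} only to the pure chaos objects, namely $F$ and $\sum_k X^{1}_{ut_k}\delta F^{ij}_{t_kt_{k+1}}$ (which do have all moments by hypercontractivity), thereby converting the moment bounds into \emph{pathwise} bounds $|\delta F^{ij}_{st}|\leq G\,(t-s)^{1/\rho-2\ep}n^{-\ep/2}$ with an integrable random constant $G$; the sewing for $\tilde r$ is then carried out pathwise via Lemma \ref{prop:sewing}, with the random factors $\|r\|_{2/p}+\|y^{1}\|_{1/p}$ simply carried along. Your route can be repaired by localizing on $\{\|r\|_{2/p}+\|y^{1}\|_{1/p}\leq M\}$ so that the weights become bounded and the $L^p$ estimate becomes legitimate, and then exhausting $\Omega$ over $M$; but this localization (or the paper's pathwise alternative) is the missing ingredient, not an optional refinement.
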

\begin{proof}
\noindent   \textbf{Step 1: A decomposition.} 
 Consider $(s,t)\in \cs_2  $, where we recall that $\cs_{2}$ stands for $\cs_{2}([0,T])$.  With the help of \eqref{eqn.y} we have the following decomposition
	\begin{align}\label{eqn.yf}
 \sum_{s\leq t_{k}<t} y_{st_{k}} \delta F^{ij}_{t_{k}t_{k+1}}  = 
 y^{1}_{s}\sum_{s\leq t_{k}<t} X^{1}_{st_{k}} \delta F^{ij}_{t_{k}t_{k+1}}  + \tilde{r}_{st}  , 
\end{align}
where we denote 
\begin{align}\label{eqn.tr}
\tilde{r}_{st}  =   \sum_{s\leq t_{k}<t} r _{st_{k}} \delta F^{ij}_{t_{k}t_{k+1}}. 
\end{align}

\noindent   \textbf{Step 2: Calculations for $\delta \tilde{r}$.} 
In the following, in order to estimate $\tilde{r}$  we first estimate $\delta \tilde{r}$. To this aim we observe that a simple computation yields  $\delta r_{sut} = \delta y^{1}_{su} X^{1}_{ut}$.  
Therefore, starting from~\eqref{eqn.tr} and using Definition \ref{def:delta-on-C1-C2} for the increment $\delta \tilde{r}$, some elementary calculations show that  
\begin{align}\label{eqn.dr}
\delta \tilde{r}_{sut} &=  r_{su}\sum_{u\leq t_{k}<t} \delta F^{ij}_{t_{k}t_{k+1}} +  \sum_{u\leq t_{k}<t}\delta r_{sut_{k}} \delta F^{ij}_{t_{k}t_{k+1}} 
\nonumber
\\
&= r_{su}\sum_{u\leq t_{k}<t} \delta F^{ij}_{t_{k}t_{k+1}} + \delta y^{1}_{su} \sum_{u\leq t_{k}<t} X^{1}_{ut_{k}}\delta F^{ij}_{t_{k}t_{k+1}}. 
\end{align}

\noindent   \textbf{Step 3: Moment estimates of $\delta \tilde{r}$ and $\tilde{r}$.} 
We now hinge on relation \eqref{eqn.dr} in order to upper bound $\tilde{r}$. 
We start by denoting 
\begin{align}\label{eqn.trunc}
y^{1,M} = \mathbf{1}_{A_{M}}\cdot y^{1}\,, \quad
\tilde{r}^{M}=\mathbf{1}_{A_{M}}\cdot \tilde{r} \,, 
\quad
  \delta \tilde{r}^{M}=\mathbf{1}_{A_{M}}\cdot \delta \tilde{r}\,, 
\quad \omega^{M} (s,t)=E\big[\mathbf{1}_{A_{M}}\cdot \omega(s,t)\big].
\end{align}
It is easy to  see that $\omega^{M}$ is a control. By the inequalities in \eqref{eqn.y1r} we also have
\begin{align}\label{eqn.ym1rm}
E\lc|y^{1,M}_{su}|^{p}\rc\leq 
  \omega^{M}(s,u) , 
\qquad\text{and}\qquad
E\lc| r^{M}_{su}|^{p/2}\rc\leq  \omega^{M}(s,u) .
\end{align}
We also recall that  Hypothesis \ref{hyp.w} holds with some 2-d control $\omega_{R}$.

Next  we multiply  both sides of  \eqref{eqn.dr} by $\mathbf{1}_{A_{M}}$. According to our notation \eqref{eqn.trunc}, we get 
\begin{align}\label{eqn.drm}
\delta r_{sut}^{M} = r^{M}_{su}\sum_{u\leq t_{k}<t} \delta F^{ij}_{t_{k}t_{k+1}} + \delta y^{1,M}_{su} \sum_{u\leq t_{k}<t} X^{1}_{ut_{k}} \delta F^{ij}_{t_{k}t_{k+1}}. 
\end{align}
We are now in a position to apply H\"older's inequality, Lemma \ref{lemma:F-bound}, Lemma \ref{lemma.xf} and the upper bound \eqref{eqn.ym1rm} in order to get
\begin{align}\label{eqn.debd}
E\lc|\delta \tilde{r}^{M}_{sut}|\rc 
\leq 
&
2\omega^{M}(s,t)^{2/p} \cdot 
\omega_{R} ([s,t]^{2})^{\frac{1}{\rho}-\frac{\ep}{2\rho}}
  \cdot \max_{k,k'} 
\omega_{R}(D_{kk'})^{\frac{\ep}{2\rho}}
 \nonumber
\\
&+ 2 \omega^{M}(s,t)^{1/p} \cdot 
\omega_{R}([s,t]^{2})^{\frac{3}{2\rho}-\frac{\ep}{2\rho}}
  \cdot \max_{k }
\omega_{R}([t_{k}, t_{k+1}] \times [0,T])^{\frac{\ep}{2\rho}}  
 . 
\end{align}
Let us now discuss the exponents in \eqref{eqn.debd}. Indeed, recall that we have chosen $p$ such that $\frac{2}{p} = \frac{1-\ep}{2\rho}$. Therefore, owing to the fact that $\rho\in [1,2)$, $\ep$ can be chosen small enough so that 
 \begin{align}\label{eqn.nu}
\nu_{p,\rho} \equiv \lp \frac{2}{p} + \frac{1-\ep/2}{\rho}\rp 
\wedge 
\lp \frac{1}{p} + \frac{3}{2} -\frac{\ep}{2} \rp>1 . 
\end{align}
In the sequel we pick a 
 $\mu   $ such that $1<\mu<\nu_{p, \rho}$. With this notation in hand define a bivariate function   $\tilde{\omega}$ by  
\begin{align*}
\tilde{\omega} (s,t)  =
\lp
 \omega^{M}(s,t)^{2/p} \cdot \omega_{R} ([s,t]^{2})^{\frac{1}{\rho}-\frac{\ep}{2\rho}} 
 \rp^{1/\mu}
+
\lp
  \omega^{M}(s,t)^{1/p} \cdot 
\omega_{R}([s,t]^{2})^{\frac{3}{2\rho}-\frac{\ep}{2\rho}}
\rp^{1/\mu}.
\end{align*}
As a direct application of \cite[Exercise 1.9 item (iii)]{FV}, it is readily checked that $\tilde{\omega}$ is a control. In addition, one can recast    
 \eqref{eqn.debd}   as 
\begin{align}\label{eqn.drmbd}
E(|\delta \tilde{r}^{M}_{sut}|) 
\leq 2  \max_{k }
\omega_{R}([t_{k}, t_{k+1}] \times [0,T])^{\frac{\ep}{2\rho}}    
  \cdot \tilde{\omega}(s,t)^{\mu}. 
\end{align}

Summarizing our considerations for this step, we have obtained that $\tilde{r}$ is an increment from $\cs_{2}$ to the Banach space $\cb=L^{1}(\Omega)$. Moreover, relation \eqref{eqn.tr} easily entails   $\tilde{r}^{M}_{t_{\ell}t_{\ell+1}}=0$ for any point $t_{\ell}$ of the partition $\cp$, and $\tilde{r}$ satisfies \eqref{eqn.drmbd}. Therefore, a direct application of  Lemma \ref{lem2.4} on $\cb=L^{1}(\Omega)$ yields
\begin{align*}
E(|  \tilde{r}_{st}^{M} |) 
\leq 2 K_{\mu} \max_{k }
\omega_{R}([t_{k}, t_{k+1}] \times [0,T])^{\frac{\ep}{2\rho}}    
  \cdot \tilde{\omega}(s,t)^{\mu}. 
\end{align*}
Plugging this estimate into \eqref{eqn.yf} and combining it with 
\eqref{eqn.cr}, the proof of our claim \eqref{eq:weighted-F} is now achieved.

\noindent   \textbf{Step 4: Path-wise estimates of $F$.} 
We now turn to item (ii) in our lemma, assuming H\"older-continuity for 
$y,y^{1}, r$ and considering uniform partitions of $[0,T]$ with $t_{k+1}-t_{k}=T/n$. In this context,   condition \eqref{eqn.Rholder} allows   to write    the upper-bound estimate of $F$ \eqref{eq:F-bound} in Lemma~\ref{lemma:F-bound} as:
\begin{align*}
\|\delta F^{ij}_{st}\|_{q} \leq C  |t-s|^{\frac{1}{\rho}-\ep} \cdot |\cp|^{ \ep  } = C  |t-s|^{\frac{1}{\rho}-\ep} \cdot n^{- \ep  }, \quad \text{for all } q>1  \text{ and }   (s,t)\in \cs_{2}. 
\end{align*}
 Applying Lemma \ref{G existence} with $z^{n}=F$, 
 $\beta=\frac{1}{\rho}-\ep$, and $\al=\ep$ 
 we obtain
 \begin{align}\label{eqn.Fas}
|\delta F^{ij}_{st}| \leq G \cdot  (t-s)^{\frac{1}{\rho}-2\ep} \cdot n^{- \ep /2}, 
\end{align}
where $G$ is a random variable admitting moments of all orders.
In a similar way and with the help of Lemma \ref{lemma.xf}, we can show that
\begin{align}\label{eqn.xfas}
\Big| \sum_{s\leq t_{k}<t} X^{1 }_{ut_{k}} F^{ij}_{t_{k}t_{k+1}} \Big| \leq G\cdot |t-s|^{\frac{3}{\rho} - 2\ep}\cdot n^{-  \ep /4}. 
\end{align}

With those preliminaries in mind, we will upper bound the increment 
$\sum_{s\leq t_{k}<t} y_{t_{k}} \delta F^{ij}_{t_{k}t_{k+1}}$ thanks to relation \eqref{eqn.yf}. Namely in the right-hand side of \eqref{eqn.yf} we have that almost surely 
\begin{align*}
y_{s}\sum_{s\leq t_{k}<t} X^{1}_{st_{k}} \delta F^{ij}_{t_{k}t_{k+1}} \to 0, \quad \text{for all } (s,t)\in \cs_{2}
\end{align*}
thanks to \eqref{eqn.xfas}. Therefore, in order to show     \eqref{eqn.xfconv}  it remains to show the convergence of $\tilde{r}$.

\noindent   \textbf{Step 4: Path-wise estimates of $\delta \tilde{r}$ and $\tilde{r}$.} 
In order to bound $\delta \tilde{r}$ in the H\"older case, we plug 
 \eqref{eqn.Fas} and \eqref{eqn.xfas} into the expression  \eqref{eqn.dr} we have obtained for $\delta \tilde{r}$. We end up with 
\begin{align*}
|\delta \tilde{r}_{sut}| 
&\leq G \lp
\|r\|_{2/p} |t-s|^{\frac{2}{p}+\frac{1}{\rho}-2\ep}\cdot n^{-\ep/2} + \|y^{1}\|_{1/p} |t-s|^{\frac{1}{p}+\frac{3}{\rho}-2\ep} \cdot n^{-\ep/4}
\rp
\\
&\leq n^{-\ep/4} G \lp \|r\|_{2/p}+\|y^{1}\|_{1/p}\rp \cdot |t-s|^{\mu},  
\end{align*}
where similarly to what we did in Step 3, we take $1<\mu<\tilde{\nu}_{p,\rho}$ with 
\begin{align*}
\tilde{\nu}_{p,\rho} =  \lp \frac{2}{p} +\frac{1}{\rho}-2\ep \rp \wedge \lp \frac{1}{p} +\frac{3}{\rho}-2\ep \rp.  
\end{align*}
 Hence one can resort to the H\"older version of the sewing lemma contained in Lemma \ref{prop:sewing}. We get 
\begin{align*}
|\tilde{r}_{st}|\leq  C G (\|r\|_{2/p}+\|y^{1}\|_{1/p}) \cdot n^{-\ep/4} |t-s|^{\mu}, 
\end{align*}
from which we easily deduce 
\begin{align}\label{eqn.rto0}
  \lim_{n\to\infty}|\tilde{r}_{s t}| = 0. 
\end{align}
 In conclusion, plugging \eqref{eqn.rto0} and \eqref{eqn.xfas} into \eqref{eqn.yf} we have obtained relation \eqref{eqn.xfconv}.    
 The proof is   complete.  
\end{proof}

\noindent 
We now handle some weighted sums of the increment $X^{3}$ which will feature in our trapezoid sums. 
\begin{lemma}\label{lem:weighted-g}
As in Lemma \ref{lem:weighted-F}, we consider a Gaussian process $X$ whose covariance $R$ satisfies   Hypothesis \ref{hyp.x} with $\rho\in [1,2)$. We call $\omega_{R}$ the control defined in Hypothesis \ref{hyp.w}. 
Let $X^{3} = \{ X^{3,ij\ell}_{st}; (s,t)\in \cs_{2}([0,T]), i,j,\ell =1,\dots, d \} $ be the third order element in the rough path above $X$ and take a sequence of partitions $\cp$ with mesh $|\cp|$. 
We also consider a continuous process 
      $y$  such that $y_{0}=0$.
      Then the following holds true: 

\noindent
\emph{(i)} 
Let us assume that the increments of $y$ are dominated by a control 
  $\omega $  over $[0,T]$.  Namely we suppose that for all $(s,t)\in \cs_{2}([0,T])$ we have  $|\delta y_{st}| \leq \omega(s,t)^{1/p}$ almost surely, where $p$ is such that $\frac{1}{p}= \frac{1-\ep}{2\rho}$ as in Lemma \ref{lem:weighted-F}.
  Consider a generic index $(i,j,\ell) \in \{ 1,\dots, d \}^{3}$, and recall that $A_{M}$ is defined by $A_{M}= \{ \omega (0,T)\leq M \}$ for $M>0$. Then we have 
\begin{equation}\label{eq:weighted-zeta}
E\lc\mathbf{1}_{A_{M}}	\cdot\left|\sum_{s\leq t_k<t} \delta y_{st_k} X_{t_kt_{k+1}}^{3,ijl}\right| \, \rc
\leq  C\cdot  \max_{k,k'} 
\omega_{R}(D_{kk'})^{\frac{\ep}{2\rho}}
  .
\end{equation}
In particular, 
\begin{align}\label{eqn.yx3conv}
\sum_{s\leq t_k<t} \delta y_{st_k} X_{t_kt_{k+1}}^{3,ijl} 
\longrightarrow 0, \qquad \text{in probability as $|\cp|\to0$.}
\end{align}

\noindent
\emph{(ii)}  
If we are in a H\"older setting, namely $y\in \cac^{1/p}$ and
$\omega_{R}$ verifying \eqref{eqn.Rholder}, and if we also consider the uniform partition $\cp$ (see Lemma \ref{lem:weighted-F} item \emph{(ii)}), then we get  
\begin{align}\label{eqn.yx3as}
\sum_{s\leq t_k<t} \delta y_{st_k} X_{t_kt_{k+1}}^{3,ijl} \longrightarrow 0, \qquad \text{almost surely as $n\to\infty$.}
\end{align}

\end{lemma}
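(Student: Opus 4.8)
The plan is to follow the exact route of the proof of Lemma \ref{lem:weighted-F}, which is in fact a little simpler here since $y$ is only assumed to be dominated by a control rather than to carry a full controlled-path decomposition. I would first introduce the increment
\[
S_{st} := \sum_{s\leq t_k<t} \delta y_{st_k}\, X_{t_kt_{k+1}}^{3,ij\ell}, \qquad (s,t)\in\cs_2,
\]
and record two structural facts. First, since $\delta y_{t_\ell t_\ell}=0$, we have $S_{t_\ell t_{\ell+1}}=0$ on consecutive partition points. Second, using $\delta y_{st_k}-\delta y_{ut_k}=\delta y_{su}$, a direct computation gives the key algebraic identity
\[
\delta S_{sut}=\delta y_{su}\,\delta g_{ut}^{ij\ell},
\]
where $\delta g^{ij\ell}$ is the third-order sum defined in \eqref{eqn.g}. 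This identity is what reduces everything to the bounds already obtained for $g$ in Proposition \ref{prop.g}.

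For item \emph{(i)} I would work on the truncated set $A_{M}=\{\omega(0,T)\le M\}$ and estimate $E[\mathbf{1}_{A_{M}}|\delta S_{sut}|]$. Applying H\"older's inequality with exponent $p$ and its conjugate, using $|\delta y_{su}|\le\omega(s,u)^{1/p}$ on $A_{M}$ together with the bound \eqref{eqn.58} of Proposition \ref{prop.g} for $\|\delta g_{ut}^{ij\ell}\|$ (hypercontractivity in the third chaos allows any $L^q$ norm of $\delta g$ to be controlled by its $L^2$ norm), and writing $\omega^{M}(s,t)=E[\mathbf{1}_{A_{M}}\omega(s,t)]$ as in \eqref{eqn.trunc}, I obtain
\[
E\big[\mathbf{1}_{A_{M}}|\delta S_{sut}|\big]\le C\,\max_{k,k'}\omega_{R}(D_{kk'})^{\frac{\ep}{2\rho}}\cdot \omega^{M}(s,t)^{1/p}\,\omega_{R}([s,t]^{2})^{\frac{3-\ep}{2\rho}}.
\]
The decisive bookkeeping is that the total control exponent is $\tfrac1p+\tfrac{3-\ep}{2\rho}=\tfrac{2-\ep}{\rho}$, which exceeds $1$ precisely when $\ep<2-\rho$; since $\rho\in[1,2)$ this is compatible with the admissible range $\ep\le(3-\rho)\wedge(2-2\rho/\theta)$ of Proposition \ref{prop.g}. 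Hence, picking $1<\mu<\tfrac{2-\ep}{\rho}$ and setting $\tilde\omega=\big(\omega^{M}(\cdot)^{1/p}\,\omega_{R}([\,\cdot\,]^{2})^{(3-\ep)/(2\rho)}\big)^{1/\mu}$, which is a control by \cite[Exercise 1.9 item (iii)]{FV}, the discrete sewing Lemma \ref{lem2.4} applied on $\cb=L^{1}(\Omega)$ to $Q_{st}=\mathbf{1}_{A_{M}}S_{st}$ yields \eqref{eq:weighted-zeta}. Convergence in probability \eqref{eqn.yx3conv} then follows from Markov's inequality, the continuity of $\omega_{R}$ forcing $\max_{k,k'}\omega_{R}(D_{kk'})\to0$ as $|\cp|\to0$, and letting $M\to\infty$ since $\mathbf{P}(A_{M})\to1$.

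For item \emph{(ii)} I would pass to pathwise estimates as in Step 4 of Lemma \ref{lem:weighted-F}. Under \eqref{eqn.Rholder} one has $\omega_{R}([s,t]^{2})\le C|t-s|$ and, on a uniform partition, $\max_{k,k'}\omega_{R}(D_{kk'})\le C/n$; substituting these into \eqref{eqn.58} gives $\|\delta g_{st}^{ij\ell}\|_{q}\le C\,n^{-\ep/(2\rho)}|t-s|^{(3-\ep)/(2\rho)}$ for every $q$. Lemma \ref{G existence} upgrades this to $|\delta g_{st}^{ij\ell}|\le G\,n^{-\ep/(2\rho)+\kappa}|t-s|^{\gamma}$ with $G$ admitting moments of all orders and $\gamma<(3-\ep)/(2\rho)$. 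Combining this with $|\delta y_{su}|\le\|y\|_{1/p}|u-s|^{1/p}$ in the identity $\delta S_{sut}=\delta y_{su}\,\delta g_{ut}^{ij\ell}$ produces $|\delta S_{sut}|\le G\,\|y\|_{1/p}\,n^{-\ep/(2\rho)+\kappa}|t-s|^{\mu}$ with $\mu=\tfrac1p+\gamma>1$ for $\ep,\kappa$ small. The H\"older sewing Lemma \ref{prop:sewing} then gives $|S_{st}|\le C\,G\,\|y\|_{1/p}\,n^{-\ep/(2\rho)+\kappa}|t-s|^{\mu}$, and choosing $\kappa<\ep/(2\rho)$ forces the $n$-power to vanish, establishing \eqref{eqn.yx3as}.

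The main obstacle, such as it is, is not a single hard estimate but the careful exponent accounting: one must verify that the product of controls entering the sewing step carries a total exponent strictly above $1$, which is exactly where the restriction $\rho<2$ (equivalently $\ep<2-\rho$) is used. Once the identity $\delta S_{sut}=\delta y_{su}\,\delta g_{ut}^{ij\ell}$ is in place and Proposition \ref{prop.g} is invoked, both the discrete and the H\"older sewing lemmas apply essentially verbatim from the $F$-case, so the argument is shorter than that of Lemma \ref{lem:weighted-F} because no remainder term $\tilde r$ needs to be peeled off.
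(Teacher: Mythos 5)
Your proposal is correct and follows essentially the same route as the paper: the identity $\delta S_{sut}=\delta y_{su}\,\delta g_{ut}^{ij\ell}$ is exactly the paper's relation for $\delta r^{M}_{sut}$, the H\"older-plus-Proposition \ref{prop.g} estimate reproduces \eqref{eqn.drbd}, and the exponent check $\tfrac1p+\tfrac{3-\ep}{2\rho}=\tfrac{2-\ep}{\rho}>1$ followed by the discrete (resp.\ H\"older) sewing lemma is precisely how the paper concludes, deferring to the argument of Lemma \ref{lem:weighted-F}. Your version merely makes explicit a few steps the paper leaves to the reader (the vanishing of $S$ on consecutive partition points and the exact value of the total exponent).
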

\begin{proof}
The proof is very similar to what we did for Lemma \ref{lem:weighted-F}. For sake of conciseness we will only outline some of the steps, focusing mainly on getting an equivalent of \eqref{eqn.debd}. 
Along the same lines as \eqref{eqn.trunc}, we set  
	 \begin{align*}
	 y^{M}_{s}= \mathbf{1}_{A_{M}} \, y_{s}
\,,	 \qquad 
r_{st}^{M} =
\mathbf{1}_{A_{M}}\cdot  \sum_{s\leq t_k<t} \delta y_{st_k} X_{t_kt_{k+1}}^{3,ijl}.
\end{align*}
In this context we let the reader check that the equivalent of relation \eqref{eqn.drm} becomes    
\begin{align*}
\delta r_{sut}^{M} =  \delta y_{su}^{M}\cdot   \sum_{u\leq t_{k}<t} X^{3,ijl}_{t_{k}t_{k+1}}. 
\end{align*}
Hence applying H\"older's inequality with $p$, $q$ such that $\frac{1}{p}+\frac{1}{q}=1$, invoking Proposition \ref{prop.g} and recalling that $\omega^{M} $ is introduced in relation \eqref{eqn.trunc},   we get  
\begin{align}\label{eqn.drbd}
E\lc|\delta r_{sut}^{M} |\rc 
&
=   E\lc
|\delta y^{M}_{su}|^{p}  \rc^{1/p} \cdot
\Big\| \sum_{u\leq t_{k}<t} X^{3,ijl}_{t_{k}t_{k+1}}\Big\|_{q}
\nonumber
\\
&\leq 
   \omega^{M}(s,u)^{1/p}   
 \cdot   \max_{k,k'} 
\omega_{R}(D_{kk'})^{\frac{\ep}{2\rho}}
   \cdot 
\omega_{R}([s,t]^{2})^{\frac{3-\ep}{2\rho}}
.
\end{align}
Observe that \eqref{eqn.drbd} corresponds to \eqref{eqn.debd} in the proof of Lemma \ref{lem:weighted-F}. 
Also notice that we have chosen $p$ (with a small enough $\ep$) so that 
$\frac{1}{p} + \frac{3-\ep}{2\rho}>1$. Otherwise stated, condition~\eqref{eqn.nu} holds in the current context. Therefore one can prove our claims \eqref{eq:weighted-zeta}, \eqref{eqn.yx3conv}  and \eqref{eqn.yx3as} exactly as in Lemma \ref{lem:weighted-F}. 
\end{proof}

\begin{remark}\label{remark.b}
Combining Lemma \ref{lemma.xf} with the above considerations, one can easily extend the conclusions of Lemma \ref{lem:weighted-g} to sums of the form 
\begin{align*}
\sum_{s\leq t_{k}<t} \delta y_{st_{k}}  X^{2,ij}_{t_{k}t_{k+1}} X^{1,\ell}_{t_{k}t_{k+1}}. 
\end{align*}
Details are ommited for sake of conciseness. 

\end{remark}

\subsection{Convergence of the trapezoid rule}\label{sec:proof-main-thm}
With the previous preliminary results in hand, we are now ready to give  a complete statement and carry out the proof of  our main  Theorem~\ref{thm:cvgce-trapezoid-intro}.
\begin{theorem}\label{theorem:Trapezoid-Rule}
	Let $X$ be a centered $\R^{d}$-valued  Gaussian process on $[0,T]$ with covariance function $R$. Suppose that Hypothesis \ref{hyp.x} holds true for $\rho\in [1,2)$.  Denote the rough path lift of $X$ by $\mathbf{X}=(X^1,X^2,X^3)$.  	
Next consider a $\R^{d}$-valued  controlled process $y$ of order $2$, according to Definition~\ref{def:ctrld-process}. Specifically, there exist processes 	  $  y^{1}, y^{2}, r^{0}, r^{1}$ with regularities to be specified below and such that  
	\begin{align}\label{eqn.yctr}
\delta y_{st} =y_{s}^{1 }X_{st}^{1 }+y_{s}^{2 }X_{st}^{2  }+r_{st}^{0} , 
\qquad \delta y^{1}_{st} = y^{2}_{s}X^{1}_{st}+ r^{1}_{st},
\end{align}  
where we recall again our Notation~\ref{not:tensor-products} on matrix products.
For a given partition of $[0,T]$: $\cp=\{0=t_0<t_1<\cdots<t_{n+1}=T\}$ with mesh size $|\cp|$,  we define the trapezoid rule:
\begin{equation}\label{eq:proof-what-to- prove}
\operatorname{tr-} \cj_0^T(y,X)= \sum_{k=0}^n\frac{y_{t_k} +y_{t_{k+1}} }{2}  \cdot  X_{t_k t_{k+1}}^{1 }, 
\end{equation}
where we denote $y_{t} = ( y_{t}^1 ,\dots, y_{t}^{d} ) $ and  simply write $(y_{t_k} +y_{t_{k+1}})  \cdot  X_{t_k t_{k+1}}^{1 }$ for the inner product $\sum_{i=1}^{d} (y_{t_k}^{i} +y^{i}_{t_{k+1}})  \,  X_{t_k t_{k+1}}^{1, i } $.
Then the following holds true: 

\smallskip

\noindent	\emph{(i)} 
Assume  that the framework of Definition \ref{def:ctrld-process} prevails and call $\omega$ the control function over $[0,T]$ such that for $\frac{1}{p}$ of the form $\frac{1-\ep}{2\rho}$ we have 
 \begin{align}\label{eqn.yctrbd}
|r^{0}_{st}|\leq  \omega(s,t)^{3/p}, \qquad 
|\delta y^{2}_{st}| \leq \omega(s,t)^{1/p},
\qquad 
|r^{1}_{st}|\leq \omega(s,t)^{2/p}.
\end{align}
 Then as the mesh size of the partition $|\cp|$ goes to $0$,   we have 
\begin{align}\label{eqn.trapcp}
\operatorname{tr-} \cj_0^T(y,X)\rightarrow \int_0^T  {y}_s \, d\mathbf{X}_s \qquad \text{in probability},
\end{align}
where the right hand side above designates the rough integral of $y$ against $X$ as given in   Proposition \ref{prop:intg-ctrld-process}.

\noindent\emph{(ii)} 
 Assume we are in a H\"older setting, that is $\omega_{R}$ verifies \eqref{eqn.Rholder}. 
Then recall that  $X$ generates a $\frac{1}{p}$-H\"older rough path, and we also assume that $r^{0}\in \cac^{3/p}$, $r^{1}\in \cac^{2/p}$ and $y,y^{1}, y^{2}\in \cac^{1/p}$.
 Eventually, consider   the uniform partition $0=t_{0}<\cdots<t_{n}=T$ over $[0,T]$ with mesh $T/n$.  Then the convergence in \eqref{eqn.trapcp} holds almost surely. 
\end{theorem}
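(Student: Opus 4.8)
The plan is to compare the trapezoid sum with the enriched Riemann sum that defines the rough integral. By Proposition~\ref{prop:intg-ctrld-process}, if we set
\[
S_{\cp} = \sum_{k=0}^n \Big( y_{t_k} X_{t_k t_{k+1}}^1 + y_{t_k}^1 X_{t_k t_{k+1}}^2 + y_{t_k}^2 X_{t_k t_{k+1}}^3 \Big),
\]
then $S_{\cp}\to\int_0^T y_s\,d\mathbf{X}_s$ almost surely, so it suffices to show that the defect $D_{\cp}:=\operatorname{tr-} \cj_0^T(y,X)-S_{\cp}$ vanishes in the appropriate mode. Since $\tfrac{y_{t_k}+y_{t_{k+1}}}{2}=y_{t_k}+\tfrac12\delta y_{t_k t_{k+1}}$, the $y_{t_k}X^1$ terms cancel and
\[
D_{\cp}=\tfrac12\sum_k \delta y_{t_k t_{k+1}}\cdot X_{t_k t_{k+1}}^1-\sum_k y_{t_k}^1 X_{t_k t_{k+1}}^2-\sum_k y_{t_k}^2 X_{t_k t_{k+1}}^3 .
\]
I would then insert the controlled expansion \eqref{eqn.yctr}, $\delta y=y^1X^1+y^2X^2+r^0$, and use geometricity of $\mathbf{X}$ (Definition~\ref{def:RP}) through the shuffle relations $X^{1,i}X^{1,j}=X^{2,ij}+X^{2,ji}$ and $X^{2,jk}X^{1,i}=X^{3,ijk}+X^{3,jik}+X^{3,jki}$. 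This rewrites $\tfrac12\delta y\cdot X^1$ as a second-order part, a third-order part, and a remainder part $\tfrac12 r^0\cdot X^1$, and thus splits $D_{\cp}$ into three groups that I treat separately.

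\textbf{Second order.} After subtracting the compensator $\sum_k y^1 X^2=\sum_{i,j}\sum_k y_{t_k}^{1,ij}X^{2,ji}_{t_k t_{k+1}}$, the second-order part of $D_{\cp}$ reduces to the antisymmetric (Lévy area) expression
\[
\tfrac12\sum_{i,j}\sum_k y_{t_k}^{1,ij}\big(X^{2,ij}_{t_k t_{k+1}}-X^{2,ji}_{t_k t_{k+1}}\big),
\]
which vanishes on the diagonal $i=j$ and, for $i\neq j$, equals $\tfrac12\sum_{i\neq j}\sum_k y^{1,ij}_{t_k}\big(\delta F^{ij}_{t_k t_{k+1}}-\delta F^{ji}_{t_k t_{k+1}}\big)$ with $F$ defined in \eqref{F}. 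The second relation in \eqref{eqn.yctr}, namely $\delta y^1=y^2X^1+r^1$, exhibits each component $y^{1,ij}$ as a controlled process of order one in the sense required by Lemma~\ref{lem:weighted-F} (its derivative being a component of $y^2$ and its remainder a component of $r^1$); that lemma then drives these weighted sums to $0$.

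\textbf{Third order and remainder.} The third-order part of $D_{\cp}$, together with the compensator $\sum_k y^2 X^3$, is a finite linear combination of \emph{value-weighted} sums of the form $\sum_k y^{2}_{t_k}X^{3,ij\ell}_{t_k t_{k+1}}$. For each of them I would use the base-point splitting $y^2_{t_k}=y^2_0+\delta y^2_{0 t_k}$ to write the sum as $y^2_0\,\delta g^{ij\ell}_{0T}+\sum_k \delta y^2_{0 t_k}X^{3,ij\ell}_{t_k t_{k+1}}$. The first piece tends to $0$ because $\|\delta g_{0T}\|_{p}\to0$ as $|\cp|\to0$ by Proposition~\ref{prop.g}, while the second tends to $0$ by Lemma~\ref{lem:weighted-g} applied to the process $t\mapsto\delta y^2_{0t}$, which vanishes at $0$ and has increments dominated by $\omega$; hence every third-order contribution disappears. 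The remaining group $\tfrac12\sum_k r^0_{t_k t_{k+1}}\cdot X^1_{t_k t_{k+1}}$ is estimated directly: from $|r^0_{st}|\leq\omega(s,t)^{3/p}$ and $\|X^1_{st}\|\leq C\,\omega_{R}([s,t]^{2})^{1/(2\rho)}$, the generic summand carries total exponent $\tfrac3p+\tfrac1{2\rho}=\tfrac{4-3\ep}{2\rho}>1$ for $\rho<2$ and $\ep$ small, so super-additivity of the controls (via Lemma~\ref{lem2.4}) forces this sum to $0$ as well.

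Assembling the three groups proves $D_{\cp}\to0$, whence \eqref{eqn.trapcp}. For item \emph{(i)} I would combine the in-probability statements of Lemmas~\ref{lem:weighted-F} and~\ref{lem:weighted-g} with the $L^p$-decay of $\delta g$; for item \emph{(ii)} I would instead invoke the almost-sure, Hölder, uniform-partition versions \eqref{eqn.xfconv} and \eqref{eqn.yx3as}, upgrade $\delta g_{0T}\to0$ to an almost-sure statement through the Kolmogorov-type Lemma~\ref{G existence}, and control the remainder group by the Hölder sewing Lemma~\ref{prop:sewing}. The main obstacle is the third-order group: the trapezoid rule naturally produces \emph{value-weighted} sums of $X^3$, whereas Lemma~\ref{lem:weighted-g} is phrased for \emph{increment-weighted} sums; the reconciliation rests on the a priori non-obvious fact that the \emph{unweighted} third-order sum $\delta g_{0T}$ itself vanishes in the limit, which is exactly the content of Proposition~\ref{prop.g} and is what makes the base-point decomposition effective.
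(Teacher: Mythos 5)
Your proposal is correct and follows essentially the same route as the paper: the same splitting off of the enriched Riemann sum, reduction of the second-order defect to the antisymmetric part of $X^{2}$ weighted by $y^{1}$ and handled by Lemma~\ref{lem:weighted-F}, third-order weighted sums handled by Lemma~\ref{lem:weighted-g} together with Proposition~\ref{prop.g}, and a remainder term killed by super-additivity of the product control. The only (harmless) variations are that you convert the mixed term $y^{2}X^{2}\cdot X^{1}$ into third-order sums via the shuffle identity where the paper invokes Remark~\ref{remark.b}, and that you make explicit the base-point splitting $y^{2}_{t_k}=y^{2}_{0}+\delta y^{2}_{0t_k}$ needed before applying Lemma~\ref{lem:weighted-g}, a detail the paper leaves implicit.
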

\begin{proof}
Proceeding with the implicit assumption that we sum over  $t_k\in \cp$, omiting the summation sign for notational convenience we get  
\begin{equation}\label{eq:trapezoid-first}
\operatorname{tr-} \cj_0^T(y,X)= y_{t_{k} }  \cdot X_{t_{k} t_{k+1}}^{1 }
+\frac{1}{2}\delta y_{t_{k} t_{k+1}}  \cdot X_{t_{k} t_{k+1}}^{1 }.
\end{equation}
Hence plugging the decomposition  \eqref{eqn.yctr} into \eqref{eq:trapezoid-first} and invoking Notation~\ref{not:tensor-products} on matrix products with $m=d$, we obtain
\begin{align}\label{eqn.trexp}
\operatorname{tr-} \cj_0^T(y,X) =&
y_{t_{k} }  \cdot X_{t_{k} t_{k+1}}^{1 }
+\frac{1}{2}\Bigg(y_{t_{k} }^{1 }X_{t_{k} t_{k+1}}^{1 }+y_{t_{k} }^{2  }X_{t_{k} t_{k+1}}^{2  }+r_{t_{k} t_{k+1}}^{0 }\Bigg) \cdot X_{t_{k} t_{k+1}}^{1 }. 
\end{align}
Let us rearrange the right-hand side above by  by setting 
\begin{align*}
I_{1} &=    y_{t_{k} }  \cdot X_{t_{k} t_{k+1}}^{1 } + y^{1}_{t_{k}} \cdot X^{2}_{t_{k}t_{k+1}} + y^{2}_{t_{k}} \cdot X^{3}_{t_{k}t_{k+1}}    
\\
 I_{2} &=   \frac12 y^{1}_{t_{k}} X^{1}_{t_{k}t_{k+1}}\cdot X^{1}_{t_{k}t_{k+1}} 
 -  y^{1}_{t_{k}} \cdot X^{2}_{t_{k}t_{k+1}} 
\\
I_{3}&=     \frac12 y^{2}_{t_{k}} X^{2}_{t_{k}t_{k+1}}\cdot X^{1}_{t_{k}t_{k+1}} 
- y^{2}_{t_{k}} \cdot X^{3}_{t_{k}t_{k+1}}   
\\
I_{4}&=\frac12 r^{0}_{t_{k}t_{k+1}} \cdot X^{1}_{t_{k}t_{k+1}},
\end{align*}
where we have written $u\cdot v$ for inner products of vectors as well as matrices.
Then one can recast \eqref{eqn.trexp} as 
\begin{align}\label{eqn.i1234}
\operatorname{tr-} \cj_0^T(y,X)  = I_{1}+I_{2}+I_{3}+I_{4} .
\end{align}
Now we analyze the terms $I_{1}$, \dots, $I_{4}$ in \eqref{eqn.i1234} in order to prove \eqref{eqn.trapcp}. We will focus on the assumptions and conclusions of item (i), item (ii) being treated along the same lines.

First we  observe  that
$I_{1}$ is exactly of the form \eqref{eqn.ri}, with $p<4$ and thus $m=3$. 
 Hence a direct application of Proposition \ref{prop:intg-ctrld-process} yields the almost 
 sure limit 
\begin{align*}
I_{1} \to \int_{0}^{T} y  \, d{\bf X} \qquad \text{as } |\cp|\to 0. 
\end{align*}
 
 Let us now analyze the term $I_{2}$ in \eqref{eqn.i1234}. To this aim, an elementary examination of matrix indices reveals that 
 \begin{align}\label{eqn.i2o}
I_{2} = y^{1}_{t_{k}} \cdot \lp \frac12 X^{1}_{t_{k}t_{k+1}} \otimes X^{1}_{t_{k}t_{k+1}} - X^{2}_{t_{k}t_{k+1}} \rp.   
\end{align}
Furthermore, since $X$ is a geometric rough path, notice that a consequence of \eqref{eq:multiplicativity} is that for all $(s,t)\in \cs_{2}([0,T])$ we have
\begin{align*}
\text{Sym} (X^{2}_{st}) = \frac12 X^{1}_{st}\otimes X^{1}_{st} . 
\end{align*}
Hence one can write \eqref{eqn.i2o} as 
\begin{align*}
I_{2} = - y^{1}_{t_{k}} \cdot \text{Antisym} (X^{2}_{t_{k}t_{k+1}}) 
=   \frac12 \sum_{i=1}^{d} y_{t_{k}}^{1,ij} \lp X_{t_{k}t_{k+1}}^{2, ji} - X_{t_{k}t_{k+1}}^{2, ij}  \rp_{ij} . 
\end{align*}
Thanks to our definition \eqref{F} of the increment $F$, this becomes 
\begin{align}\label{eqn.i2n}
I_{2} =   \frac12 \sum_{i=1}^{d}  y_{t_{k}}^{1,ij}  
\lp \delta F^{ji}_{t_{k}t_{k+1}}-\delta F^{ij}_{t_{k}t_{k+1}} \rp.
\end{align}
Hence owing to identity \eqref{eqn.i2n}, and since we have assumed that the increments of $y^{1}$ are dominated by the control $\omega$, Lemma  \ref{lem:weighted-F}   item (i) shows that 
\begin{align*}
\lim_{|\cp|\to 0} I_{2} = 0, \qquad \text{in probability.}
\end{align*}

We now handle the sum $I_{3}$ in \eqref{eqn.i1234}. To this aim, we   resort to Lemma \ref{lem:weighted-g} item (i) for the terms $y_{t_{k}}^{2}\cdot X^{3}_{t_{k}t_{k+1}}$ and to Remark \ref{remark.b} for the terms $y^{2}_{t_{k}} X^{2}_{t_{k}t_{k+1}} \cdot X^{1}_{t_{k}t_{k+1}} $. We end up with 
\begin{align*}
\lim_{|\cp|\to 0} I_{3} = 0, \qquad \text{in probability.}
\end{align*}

 In order to show   the convergence in \eqref{eqn.trapcp}, it remains to show that $I_{4}\to 0$ in probability. 
 Towards this aim, recall from Definition \ref{def:ctrld-process} that the increment $r^{0}_{t_{k}t_{k+1}}$ is dominated by $\omega (t_{k}, t_{k+1})^{3/p}$. Hence for a small $\ep>0$ we get  
 \begin{align}\label{eqn.i4bd}
|I_{4}| = \frac12 \sum_{0\leq t_{k}<T} \left| r^{0}_{t_{k}t_{k+1}} \cdot  X^{1}_{t_{k}t_{k+1}}\right| \leq \frac12 \sum_{0\leq t_{k}<T}  \omega (t_{k}, t_{k+1})^{3/p} \|X \|_{p\text{-var}, [t_{k}, t_{k+1}]}
\nonumber
\\
\leq \frac12 \max_{k}  \omega (t_{k}, t_{k+1})^{\ep} \cdot  \sum_{0\leq t_{k}<T}  \omega (t_{k}, t_{k+1})^{3/p-\ep} \|X \|_{p\text{-var}, [t_{k}, t_{k+1}]}. 
\end{align}
Now set 
\begin{align*}
\tilde{\omega}(s,t) = \omega (s,t)^{3/p-\ep} \cdot\|X \|_{p\text{-var}, [s,t]}
\qquad (s,t)\in \cs_{2}.
\end{align*}
Using the same argument as for \eqref{eqn.drmbd}, 
 it is easy to see that $\tilde{\omega}$ is a control. Therefore, by the super-additivity of $\tilde{\omega}$ we get
\begin{align*}
|I_{4}|  \leq \frac12 \max_{k}  \omega (t_{k}, t_{k+1})^{\ep} \cdot   \omega (0,T)^{3/p-\ep}\cdot \|X \|_{p\text{-var}, [0,T]}. 
\end{align*}
Since $\max_{k}\omega(t_{k}, t_{k+1})\to 0$ as $|\cp|\to0$ it follows that $I_{4}\to 0$ almost surely.  
This completes the proof of \eqref{eqn.trapcp}. 
 Moreover, recall that claim (ii) in our statement is obtained easily by adapting slightly the considerations above, similarly to what we have done in Lemma \ref{lem:weighted-F}. This completes the proof of our theorem.
  \end{proof}
 
 As mentioned in Theorem \ref{thm:cvgce-trapezoid-intro}, typical examples of controlled processes are given by solutions of rough differential equations and processes of the form $y=f(X)$. Hence one can apply Theorem \ref{theorem:Trapezoid-Rule} in order to get a trapezoid rule \eqref{eq:proof-what-to- prove} for $f(X)$. However, we would also like to consider Riemann sums which are closer to the ones handled in \cite{BNN, HN}. This is why we wish to consider sums fo the form:  
   \begin{align}\label{eqn.m}
\text{m-}\mathcal{J}_{0}^{T} (f(X), X) :=   \sum_{k=0}^{n-1} f\left( \frac{X_{t_{k}}+X_{t_{k+1}}}{2} \right)   \delta X_{t_{k}t_{k+1}}. 
\end{align}
We now state  a corollary of Theorem \ref{theorem:Trapezoid-Rule} giving the convergence of m-$\cj_{0}^{T} (f(X), X)$ above. 
 	\begin{cor}\label{cor.mid}
	Let  $X$ be as in Theorem \ref{theorem:Trapezoid-Rule}. Consider   function  $f\in C^{3}_{b}(\RR^{d} )$    
	and the midpoint rule m-$ \cj_{0}^{T} (f(X), X)$ defined by \eqref{eqn.m}.
	 Then we have
	\begin{align}
	\label{eqn.mid}
\emph{m-}\mathcal{J}_{0}^{T} (f(X),X)  \to \int_{0}^{T} f(X_{s})   d{\bf X}_{s}
\end{align}
  as the mesh size $|\mathcal{P}|\to 0$. As in Theorem \ref{theorem:Trapezoid-Rule}, the convergence holds in probability if      
$p$-variation regularity  is considered,  and   almost surely if  H\"older continuity  is assumed. 
	\end{cor}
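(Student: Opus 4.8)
The plan is to deduce the midpoint rule from the trapezoid rule of Theorem \ref{theorem:Trapezoid-Rule} by controlling the difference between the two Riemann sums. Since $f\in C^3_b(\RR^d)$ and $\mathbf{X}$ is a geometric rough path, the process $y=f(X)$ is a controlled path of order $2$ in the sense of Definition \ref{def:ctrld-process}, with Gubinelli derivatives $y^1=Df(X)$, $y^2=D^2f(X)$ and remainders $r^0,r^1$ enjoying the regularity required in \eqref{eqn.yctrbd}; this is standard (see e.g.\ \cite{FrizBook}). Hence Theorem \ref{theorem:Trapezoid-Rule} applies to $y=f(X)$ and yields
$$\operatorname{tr-}\cj_0^T(f(X),X)=\sum_{k}\frac{f(X_{t_k})+f(X_{t_{k+1}})}{2}\,\delta X_{t_kt_{k+1}}\longrightarrow \int_0^T f(X_s)\,d\mathbf{X}_s,$$
in probability (resp.\ almost surely) in the $p$-variation (resp.\ H\"older) setting. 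It therefore suffices to prove that $D_{\cp}:=\text{m-}\cj_0^T(f(X),X)-\operatorname{tr-}\cj_0^T(f(X),X)$ tends to $0$ in the corresponding mode of convergence.

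Write $a=X_{t_k}$, $b=X_{t_{k+1}}$ and $m_k=\tfrac12(a+b)$, so that $a-m_k=-(b-m_k)=-\tfrac12\delta X_{t_kt_{k+1}}$. A second order Taylor expansion of $f$ around $m_k$ makes the first order contributions cancel by symmetry, and gives
$$\frac{f(a)+f(b)}{2}-f(m_k)=\frac18\,D^2f(m_k)\big[\delta X_{t_kt_{k+1}}^{\otimes 2}\big]+R_k,\qquad |R_k|\le C\,|\delta X_{t_kt_{k+1}}|^{3},$$
the bound on $R_k$ using that $f\in C^3_b$. Consequently $D_{\cp}=-\tfrac18\sum_k D^2f(m_k)[\delta X^{\otimes2}_{t_kt_{k+1}}]\cdot\delta X_{t_kt_{k+1}}-\sum_k R_k\cdot \delta X_{t_kt_{k+1}}$, where $u\cdot v$ denotes the inner product of vectors. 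The remainder sum is a sum of terms of size $|\delta X_{t_kt_{k+1}}|^{4}$, so that since $p<4$ a super-additivity argument identical to the treatment of $I_4$ in the proof of Theorem \ref{theorem:Trapezoid-Rule} gives $\sum_k R_k\cdot\delta X_{t_kt_{k+1}}\to 0$. Moreover, since $D^2f$ is Lipschitz and $|m_k-X_{t_k}|=\tfrac12|\delta X_{t_kt_{k+1}}|$, replacing $m_k$ by $X_{t_k}$ in the leading term produces an extra error of the same $|\delta X|^4$ type, hence negligible.

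It remains to handle the leading weighted sum $\sum_k D^2f(X_{t_k})[\delta X^{\otimes2}_{t_kt_{k+1}}]\cdot\delta X_{t_kt_{k+1}}$. Expanding the inner products, this is a finite linear combination over $i,j,\ell$ of sums $\sum_k \partial^2_{j\ell}f^{i}(X_{t_k})\,\delta X^{j}_{t_kt_{k+1}}\delta X^{\ell}_{t_kt_{k+1}}\delta X^{i}_{t_kt_{k+1}}$. Since $\mathbf{X}$ is geometric, the shuffle relations give $\delta X^{j}_{st}\delta X^{\ell}_{st}\delta X^{i}_{st}=\sum_{\sigma\in S_3}X^{3,\sigma(j\ell i)}_{st}$, so the leading term reduces to a finite combination of sums $\sum_{0\le t_k<T}\Phi_{t_k}\,X^{3,j\ell i}_{t_kt_{k+1}}$ with weight $\Phi=\partial^2_{j\ell}f^{i}(X)$. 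Splitting the weight around its initial value, $\Phi_{t_k}=\Phi_0+\delta\Phi_{0t_k}$, and recalling the increment $\delta g$ of \eqref{eqn.g}, we obtain
$$\sum_{0\le t_k<T}\Phi_{t_k}X^{3,j\ell i}_{t_kt_{k+1}}=\Phi_0\,\delta g^{j\ell i}_{0T}+\sum_{0\le t_k<T}\delta\Phi_{0t_k}\,X^{3,j\ell i}_{t_kt_{k+1}}.$$
As $f\in C^3_b$, the increments of $\Phi$ satisfy $|\delta\Phi_{st}|\le C|\delta X_{st}|$, hence are dominated by a control with the exponent $1/p$ required in Lemma \ref{lem:weighted-g} (the normalization $\Phi_0=0$ there being harmless, as only increments appear); thus the second sum tends to $0$ by Lemma \ref{lem:weighted-g}, in probability or almost surely according to the setting.

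The genuinely new point, and the main obstacle, is the base-point term $\Phi_0\,\delta g^{j\ell i}_{0T}$: unlike every weighted sum treated in Section \ref{sec:bounds-weighted-sums}, the weight $\Phi_{t_k}$ here is a function value rather than an increment, so the required smallness cannot come from the weight and must instead come from $\delta g_{0T}$ itself. Now $\Phi_0=\partial^2_{j\ell}f^i(X_0)$ is bounded since $f\in C^3_b$, while Proposition \ref{prop.g} gives $\|\delta g^{j\ell i}_{0T}\|_p^2\le \max_{k,k'}\omega_{R}(D_{kk'})^{\ep/\rho}\,\omega_{R}([0,T]^2)^{(3-\ep)/\rho}$. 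Since $\omega_{R}$ is a continuous $2$-d control vanishing on degenerate rectangles, $\max_{k,k'}\omega_{R}(D_{kk'})\to 0$ as $|\cp|\to0$, whence $\delta g^{j\ell i}_{0T}\to0$ in $L^p$, and almost surely along the uniform partitions of item (ii) by Borel--Cantelli after choosing $p$ arbitrarily large. Combining these facts shows $D_{\cp}\to 0$, which together with the trapezoid convergence established above proves \eqref{eqn.mid} in both the $p$-variation and H\"older settings.
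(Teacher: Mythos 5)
Your proof is correct and follows essentially the same route as the paper: subtract the trapezoid sum from the midpoint sum, Taylor-expand the difference to isolate a weighted third-order term plus a quartic remainder treated exactly like $I_4$ in the proof of Theorem \ref{theorem:Trapezoid-Rule}, and kill the weighted third-order term with Lemma \ref{lem:weighted-g}. The one place you go beyond the paper is worth keeping: the paper disposes of $\sum_k \partial^{2}f(X_{t_k})\,X^{3}_{t_k t_{k+1}}$ with a bare appeal to Lemma \ref{lem:weighted-g}, even though that lemma's weights are increments $\delta y_{st_k}$ of a process vanishing at the base point, so your explicit split $\Phi_{t_k}=\Phi_0+\delta\Phi_{0t_k}$ and the separate estimate of $\Phi_0\,\delta g^{j\ell i}_{0T}$ via Proposition \ref{prop.g} (with $\max_{k,k'}\omega_{R}(D_{kk'})\to 0$) is a genuine, correct completion of that step.
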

	\begin{proof}
	We first recall that for $a, b\in \R^{d}$ we have   the following  mean value   identity 
\begin{align}\label{eqn.mv}
 \frac{f(a)+f(b)}{2}- f\Big(\frac{a+b}{2}\Big)  = \frac12  \partial^{2}f (c) \Big(\frac{b-a}{2}  \otimes  \frac{b-a}{2}\Big)  
\end{align}
  where $c\in \R^{3} $ satisfies  $c=a+\theta  (b-a)$ for some $\theta\in [0,1]$. Let us take   the difference  of \eqref{eq:proof-what-to- prove} and \eqref{eqn.m}  
and then apply the mean value identity   \eqref{eqn.mv} with $a=X_{t_{k}}$, $b=X_{t_{k+1}}$.  Then we   obtain   
  \begin{align}\label{eqn.tr-m}
   \text{tr-}\mathcal{J}_{0}^{T} (f(X), X) - \text{m-}\mathcal{J}_{0}^{T} (f(X), X) 
  =
\frac18    \sum_{k=0}^{n-1}  \partial^{2}f(c)
\lp \delta X_{t_{k}t_{k+1}} \otimes  \delta X_{t_{k}t_{k+1}}
\rp
   \cdot  \delta X_{t_{k}t_{k+1}}  
 \,,
\end{align}
 with $c = X_{t_{k}}+\theta \delta X_{t_{k}t_{k+1}}$. 
 In order to prove \eqref{eqn.mid}, it suffices to show that the right-hand side of \eqref{eqn.tr-m} converges to zero.  To this aim, we observe that  
 \begin{align}\label{eqn.fpp}
\partial^{2}f(c) = \partial^{2}f(X_{t_{k}}) + \theta \partial^{3}f(d) \delta X_{t_{k}t_{k+1}},  
\end{align}
where $d = X_{t_{k}}+\lambda \delta X_{t_{k}t_{k+1}} $ for some $\lambda \in [0,1]$. 
Substituting \eqref{eqn.fpp} into the right-hand side of \eqref{eqn.tr-m} we obtain two terms. It is then easy to see that one of the two terms is in the form of $\sum_{0\leq t_{k}<T} y_{t_{k}} X^{3}_{t_{k}t_{k+1}}$ with $y =f(X)$. It then follows from Lemma  \ref{lem:weighted-g} that it converges to zero. The other term can be treated in a similar way as for $I_{4}$ in \eqref{eqn.i4bd}, which  completes the proof. 
     \end{proof}

\subsection{Applications}\label{sec:applications}
In this section we will briefly list some important examples of Gaussian processes satisfying our standing Hypothesis \ref{hyp.x}. Notice that in the current paper we only request $V_{\rho}(R)<\infty$ with $\rho\in [1,2)$, which is a weaker condition than in \cite{JM}, and certainly weaker than in \cite{GOT}. Hence all the examples listed in those two references also apply to our context. We just highlight some of them below. 

\begin{enumerate}[wide, labelwidth=!, labelindent=0pt, label=\textbf{(\roman*)}]
\setlength\itemsep{.1in}

\item
The most obvious example is given by a fractional Brownian motion (fBm) $B^{H}$ for which the covariance function $R$ in \eqref{eq:def-covariance-X} is given by 
\begin{align*}
R^{H}(s,t) = \frac12 \lp t^{2H}+s^{2H}-|t-s|^{2H} \rp. 
\end{align*}
Then $R^{H}$ satisfies Hypothesis \ref{hyp.x} whenever $H\in (\frac14, 1)$, with $\omega_{R}([s,t]^{2})=|t-s|$. One can also verify that \eqref{eqn.Rholder} holds. 

\item
If one considers a process $X$ given as $X=B^{H_{1}}+B^{H_{2}}$ with $H_{1}, H_{2}\in (\frac14, 1)$ and two independent $\R^{d}$-valued fBms $B^{H_{1}}$ and $B^{H_{2}}$, then one can also apply our main Theorem \ref{thm:cvgce-trapezoid-intro} to $X$, with $R(s,t) = R^{H_{1}}(s,t)+R^{H_{2}}(s,t)$. 

\item
The bifractional Brownian motion, introduced in \cite{HV} is a centered Gaussian process whose covariance $R^{H,K}$ is given by  
\begin{align*}
R^{H,K}(s,t)   =\frac{1}{2^K}\left((t^{2H}+s^{2H})^K-|t-s|^{2HK}\right). 
\end{align*}
This process generalizes fBm (obtained for $K=1$), and fulfills our Hypothesis \ref{hyp.x}  whenever $HK\in (1/4,1)$.

\item
We refer to \cite{JM} for a thorough exploration of random Fourier series, some of which yield  a control such that $\omega_{R}([s,t]^{2}) \neq |t-s|$, but still satisfying Hypothesis \ref{hyp.x}. 
	 
\end{enumerate}

\end{document}